\DeclareMathOperator{\GL}{GL}
\DeclareMathOperator{\SL}{SL}
\DeclareMathOperator{\WW}{W}
\DeclareMathOperator{\Lie}{Lie}
\DeclareMathOperator{\Cent}{Cent}
\DeclareMathOperator{\rad}{rad}
\DeclareMathOperator{\Hom}{Hom}
\DeclareMathOperator{\Spec}{Spec}
\DeclareMathOperator{\Ga}{{\mathbf G}_a}
\DeclareMathOperator{\Gm}{{\mathbf G}_m}
\DeclareMathOperator{\Aut}{Aut}
\DeclareMathOperator{\Dyn}{Dyn}
\DeclareMathOperator{\Sym}{Sym}
\DeclareMathOperator{\Gal}{Gal}
\DeclareMathOperator{\der}{der}
\DeclareMathOperator{\rank}{rank}
\newcommand{\Aff}{\mathbb {A}}
\newcommand{\ad}{\mathrm{ad}}
\newcommand{\scl}{{sc}}
\newcommand{\qs}{\mathrm{qs}}
\DeclareMathOperator{\ZZ}{{\mathbb Z}}
\DeclareMathOperator{\QQ}{{\mathbb Q}}
\DeclareMathOperator{\et}{\text{\it \'et}}
\DeclareMathOperator{\La}{{\mathcal L}}
\newtheorem{lem}{Lemma}[section]
\newtheorem*{lem*}{Lemma}
\newtheorem*{thm*}{Theorem}
\newtheorem{thm}[lem]{Theorem}
\newtheorem{cor}[lem]{Corollary}
\newtheorem{nota}[lem]{Notation}
\theoremstyle{definition}{  \newtheorem{rem}[lem]{Remark}  }
\theoremstyle{definition}{  \newtheorem{exa}[lem]{Example} }
\theoremstyle{definition}{  \newtheorem{defn}[lem]{Definition} }
\let\l\left
\let\r\right
\let\semir\ltimes
\let\semil\rtimes
\newcommand{\st}{\scriptstyle}
\begin{document}

\selectlanguage{english}

\title{Non-stable $K_1$-functors of multiloop groups}

\author{A. Stavrova}
\thanks{
The author was supported at different stages of her work by the postdoctoral
grant 6.50.22.2014 ``Structure theory, representation theory and geometry of algebraic groups''
at St. Petersburg State University,
by the J.E. Marsden postdoctoral fellowship of the Fields Institute, and
by the RFBR grants 14-01-31515-mol\_a, 13-01-00429-a.}
\address{Department of Mathematics and Mechanics, St. Petersburg State University,
St. Petersburg, Russia}
\email{anastasia.stavrova@gmail.com}
\subjclass[2010]{19B99, 20G35,  17B67}
\keywords{loop reductive group, non-stable $K_1$-functor, Whitehead group, Laurent polynomials, Lie torus}

\maketitle

\begin{abstract}
Let $k$ be a field of characteristic 0. Let $G$ be a reductive group over the ring of Laurent polynomials
$R=k[x_1^{\pm 1},...,x_n^{\pm 1}]$. Assume that $G$ contains a maximal $R$-torus, and
that every semisimple normal subgroup of $G$ contains a two-dimensional split torus $\mathbf{G}_m^2$.
We show that the natural map of non-stable $K_1$-functors, also called Whitehead groups,
$K_1^G(R)\to K_1^G\bigl( k((x_1))...((x_n)) \bigr)$ is injective, and an isomorphism if $G$ is semisimple.
As an application, we provide a way to compute the difference between the
full automorphism group of a Lie torus (in the sense of Yoshii-Neher) and the subgroup generated by
exponential automorphisms.

\end{abstract}


\section{Introduction}

Let $R$ be a commutative ring with 1, and let $G$ be a reductive group scheme over $R$ in the sense
of~\cite{SGA3}. We say that the group scheme $G$ is isotropic, if it contains a proper parabolic subgroup $P$,
or, equivalently, the automorphism group of $G$ contains a split 1-dimensional torus $\Gm$. Under this assumption
one can consider the following "large"{} subgroup of $G(R)$ generated by unipotent elements,
$E_P(R)=\l<U_P(R),U_{P^-}(R)\r>$
where $U_P$ and $U_{P^-}$ are the unipotent radicals of $P$ and any opposite parabolic subgroup $P^-$.
If $R$ is a field of characteristic $0$ and $G$ is the automorphism group of a
$\ZZ$-graded finite-dimensional Lie algebra $L$ over $R$,
then $E_P(R)$ can be visualized as the subgroup generated by $\exp(\ad(x))$, where $x$
runs over all elements of non-zero grading in $L$.

The set of (left) cosets
$$
G(R)/E_P(R)=K_1^{G,P}(R)
$$
is called the non-stable $K_1$-functor associated to $G$ and $P$~\cite{Stein,HV,W10}. When
$G(R)/E_P(R)$ is a group, it is also sometimes denoted by $W_P(R,G)$ and called
the Whitehead group of $G$~\cite{Abe,Gil,ChGP-Wh}.
Both names go back to Bass' founding paper~\cite{Bass}, where the case $G=\GL_n$ was considered.
We prefer the name "non-stable $K_1$-functor"{} over "Whitehead group"{},
since it suggests the existence of other non-stable $K$-functors. Indeed,
as a functor on the category of smooth algebras over a field, the non-stable $K_1$-functor
coincides with the first of
non-stable Karoubi--Villamayor $K$-functors in the sense of J.F. Jardine~\cite{J}, see~\cite{W10,St-poly}.

It is known that $K_1^{G,P}(R)$ is a group, and independent of the choice of $P$,
if $R$ is a semilocal ring, or if every semisimple normal subgroup of $G$ contains
$(\Gm)^2$ locally in Zariski topology on $\Spec R$~\cite{SGA3,Sus,PS} (see also~\S~\ref{sec:prel}).
In this case $P$ is omitted from the notation, i.e. we write $K_1^G$ instead of $K_1^{G,P}$.

If $R=k$ is a field, the group $K_1^G(k)=G(k)/G(k)^+$ was systematically studied since 1960s in relation to the Kneser-Tits
problem~\cite{Tits}, see the excellent survey~\cite{Gil}. In particular, if $G$ is simple and simply connected,
this group is known to be torsion, trivial in many cases (e.g. if
$G$ is $k$-rational), and abelian except possibly for some groups of type $E_8$~\cite{ChM-su,Gil}. The situation
for arbitrary commutative rings is much less clear. The next simplest case seems to be the one
of polynomial rings over a field $k$. It is known that if $G$ is "constant"{}, i.e. defined over $k$,
then $K_1^G(k[x])=K_1^G(k)$~\cite{M}, and if, moreover, every semisimple normal subgroup of $G$ contains
$(\Gm)^2$, then $$K_1^G(k[x_1,\ldots,x_n])=K_1^G(k)$$ for any $n\ge 1$~\cite{Sus,Abe,St-poly}. If
$G$ is simply connected, then also
$$
K_1^G\bigl(k[x_1^{\pm 1},\ldots,x_n^{\pm 1}]\bigr)=K_1^G(k)$$ for any $n\ge 1$~\cite{Sus,St-poly}.

The non-constant case,
where $G$ is defined over the polynomial ring itself, is not so well understood.
However, recently, significant progress was made by V. Chernousov, P. Gille, and A. Pianzola
in the case of a Laurent polynomial ring~\cite{ChGP-conj,ChGP-Wh}.
Their work is motivated by applications to the theory of infinite-dimensional Lie algebras,
namely, to classification and conjugacy problems for extended affine Lie algebras (EALAs), which are higher nullity
generalizations of affine Kac-Moody algebras~\cite{AABGP}. Any EALA can be reconstructed from its centerless
core, which is a Lie torus in the sense of~\cite{Y,N}, while the
Realization theorem~\cite[Theorem 3.3.1]{ABFP} implies that all Lie tori,
except for just one class called quantum tori, are Lie algebras of some isotropic adjoint simple
group schemes over $k[x_1^{\pm 1},\ldots,x_n^{\pm 1}]$ (see \S~\ref{sec:Lie} for details).

In~\cite{ChGP-Wh} V. Chernousov, P. Gille, and A. Pianzola showed
that $K_1^G(k[x^{\pm 1}])=1$ for a simply connected group $G$ defined
over $k[x^{\pm 1}]$, provided that $G$ contains $(\Gm)^2$, and either $G$ is quasi-split,
or $k$ is algebraically closed.
In~\cite{ChGP-conj} they obtained a general theorem relating groups of points
$G(k[x_1^{\pm 1},\ldots,x_n^{\pm 1}])$ and $G\bigl(k((x_1^{\pm 1}))\ldots((x_n^{\pm 1}))\bigr)$.
We state it here in a slightly simplified form.

\begin{thm}\label{thm:ChGP}\cite[Theorem 14.3]{ChGP-conj}
Let $k$ be a field of characteristic $0$, and set $R=k[x_1^{\pm 1},\ldots,x_n^{\pm 1}]$ and
$K=k((x_1))\ldots((x_n))$.
Let $G$ be a reductive group over $R$ having a maximal $R$-torus $T$.
Then there exists a subgroup $J$ of $G(K)$ such that
\begin{itemize}
\item $J$ has no non-trivial quotient groups of finite exponent;
\item $G(K)=G(R)\cdot J\cdot G(K)^+$, where $G(K)^+$ stands for the normal subgroup of $G(K)$
generated by the $K$-points of all $K$-subgroups of $G_K$ isomorphic to $\Ga_{,K}$.
\end{itemize}
\end{thm}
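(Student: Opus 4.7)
The plan is to proceed by induction on the number $n$ of loop parameters, reducing at each step to a one-variable analysis of the inclusion $B[x^{\pm 1}] \hookrightarrow B((x))$ for a suitable coefficient ring $B$. The base case $n=0$ is immediate with $R=K=k$ and $J=1$. For the inductive step I would set $R'=k[x_1^{\pm 1},\ldots,x_{n-1}^{\pm 1}]$, $K'=k((x_1))\ldots((x_{n-1}))$, and introduce the intermediate ring $A=R'((x_n))$, yielding the tower $R\subset A\subset K$. The two inclusions would then be treated separately and the resulting pro-solvable subgroups amalgamated to form $J$.

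For the first stage $R\subset A$, I would set up a Birkhoff/Iwahori-type decomposition of $G(A)$ relative to the $x_n$-adic valuation on $A$. Beauville--Laszlo gluing identifies $G$-torsors on the compactification $\Spec R'[x_n]$ with triples consisting of a torsor on $\Spec R$, a torsor on $\Spec R'[[x_n]]$, and an identification over $\Spec A$. This should yield an approximate decomposition of the form $G(A)=G(R)\cdot\mathcal{P}^{-}(A)$ modulo $G(A)^{+}$, where $\mathcal{P}^{-}$ is the pro-unipotent radical of an opposite parahoric subgroup. The relative root subgroups of $G$ with respect to the maximal torus $T$ become $\Ga$-subgroups after base change to $K$, and their $A$-points contribute to $G(A)^{+}$; meanwhile $\mathcal{P}^{-}(A)$ carries the $x_n$-adic filtration, whose successive quotients are free $R'$-modules and hence $k$-vector spaces, giving exactly the pro-solvable-in-$k$-vector-spaces structure required.

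For the second stage $A\subset K$, the passage $R'((x_n))\to K'((x_n))$ amounts to replacing the coefficient ring $R'$ by its $(n-1)$-fold iterated Laurent completion $K'$. Reindexing the iterated Laurent series, one applies the inductive hypothesis to $G_A$ regarded as a reductive group with $n-1$ remaining loop parameters over a base containing $k((x_n))$, obtaining a pro-solvable subgroup $J''$ with $G(K)=\left\langle G(A),J'',G(K)^{+}\right\rangle$. Combining the first-stage decomposition $G(A)=\left\langle G(R),J',G(A)^{+}\right\rangle$ with this, and noting that $G(A)^{+}\subset G(K)^{+}$, produces $G(K)=\left\langle G(R),J,G(K)^{+}\right\rangle$ with $J$ generated by (images of) $J'$ and $J''$, which inherits the composition series by concatenation.

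The hard part will be the first stage: establishing the Birkhoff-type decomposition for a non-constant reductive group over $A=R'((x_n))$ which is not defined over a field. The classical decomposition for constant groups over $k((x))$ relies on the theory of affine Grassmannians and affine Weyl groups, and a relative version over $R'((x_n))$ is required. The hypothesis that $G$ carries a maximal $R$-torus $T$ is critical here: it allows one to use $T$ to split off the lattice/combinatorial part of the decomposition, so that the remainder can be controlled by the root subgroups (contributing to $G(A)^{+}$) and the pro-unipotent parahoric radical (contributing to $J$). Verifying that these pieces fit together compatibly across the $n$ iterations of the argument, and that the resulting $J$ genuinely admits a composition series whose quotients are pro-solvable groups in $k$-vector spaces, will be the core technical difficulty.
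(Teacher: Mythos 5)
This theorem is not proved from scratch in the paper: it is an imported result, namely Theorem 14.3 of Chernousov--Gille--Pianzola \cite{ChGP-conj}, and the paper's entire ``proof'' consists of explaining why the present formulation is a special case of theirs. The original theorem is stated for a twisted form of a linear algebraic $k$-group $H$ (with $H^\circ$ reductive) by a \emph{loop cocycle} $\eta\in H^1_{\et}(R,H)$; the only thing the paper checks is that a reductive $R$-group admitting a maximal $R$-torus is loop reductive (by \cite[Corollary 6.3]{GiPi-mem}), so that one may take $H=\Aut(G_0)$ for $G_0$ the split form and quote the cited theorem verbatim. That reduction is the actual role of the maximal-torus hypothesis here; your proposal uses the hypothesis for a different purpose (splitting off a lattice part of a decomposition) and never makes the connection to loop cocycles.

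What you propose instead is a ground-up proof of the Chernousov--Gille--Pianzola theorem itself. The overall shape (induction on $n$ through the tower $R\subset R'((x_n))\subset K$, a Birkhoff/Iwahori-type decomposition at each stage, with pro-unipotent parahoric pieces supplying $J$ and root subgroups supplying $G(K)^+$) resembles their strategy, but everything that makes the theorem hard is deferred: the Birkhoff-type decomposition for a \emph{non-constant} reductive group over $R'((x_n))$, where the coefficient ring $R'$ is not a field, is precisely the technical core of their work, and you explicitly flag it as unestablished. Two further concrete gaps: (a) in the second stage, the inductive hypothesis would apply to $G$ over $k((x_n))[x_1^{\pm1},\ldots,x_{n-1}^{\pm1}]$ and produce the field $k((x_n))((x_1))\cdots((x_{n-1}))$, which is not the same iterated Laurent series field as $K=k((x_1))\cdots((x_n))$; the ``reindexing'' you invoke is a genuine issue that must be argued, not a formality; (b) you assert but do not verify that the successive quotients of the $x_n$-adic filtration on $\mathcal{P}^-(A)$ are pro-solvable groups in $k$-vector spaces in the precise sense required, nor that this property survives the amalgamation of $J'$ and $J''$ into a single $J$ with the stated composition series. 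As written, the proposal is a plausible outline of somebody else's long proof rather than a proof, and it misses the short argument the paper actually gives.
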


Note that a reductive group $G$ over $R=k[x_1^{\pm 1},\ldots,x_n^{\pm 1}]$ always has a maximal $R$-torus,
if $n=1$~\cite[Propositions 5.9 and 5.10]{ChGP-line}, or if $k$ is algebraically closed and
$G$ is the adjoint group associated to a Lie torus~\cite[p. 532]{GiPi08}.
In general there are counterexamples~\cite[Remark 6.6]{GiPi-mem}.

In the setting of Theorem~\ref{thm:ChGP}, assume that $G$ is semisimple and isotropic.
Then the theorem implies that for any minimal parabolic $R$-subgroup $P$ of $G$ the natural map
\begin{equation}\label{eq:map}
K_1^{G,P}\bigl(k[x_1^{\pm 1},\ldots,x_n^{\pm 1}]\bigr)\to K_1^{G,P}\bigl(k((x_1))\ldots((x_n))\bigr)
\end{equation}
is surjective; see~\cite[Remark 14.4]{ChGP-conj} and Corollary~\ref{cor:dens-iso} in \S~\ref{sec:prel}.
This result is essential for the proof of main theorems in~\cite{ChGP-conj}, but, unfortunately, it does not
allow the computation of $K_1^G\bigl(k[x_1^{\pm 1},\ldots,x_n^{\pm 1}]\bigr)$.
V. Chernousov, P. Gille, and A. Pianzola then ask the following natural
question~\cite[p. 316]{ChGP-conj}: is the map~\eqref{eq:map} also injective, i.e. an isomorphism?
We answer this question positively in the following generality.

\begin{thm}\label{thm:main}
Let $k$ be a field of characteristic $0$. Let $G$ be a reductive group scheme
over $R=k[x_1^{\pm 1},\ldots,x_n^{\pm 1}]$ having a maximal $R$-torus $T$, and such that every semisimple
normal subgroup of $G$ contains $(\Gm_{,R})^2$.
Then the natural map
$$
K_1^G\bigl(k[x_1^{\pm 1},\ldots,x_n^{\pm 1}]\bigr)\to K_1^G\bigl(k((x_1))\ldots((x_n))\bigr)
$$
is injective. If $G$ is semisimple, this map is an isomorphism.
\end{thm}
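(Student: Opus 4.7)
The surjectivity of the map (in the semisimple case) is immediate from Theorem~\ref{thm:ChGP}: under the hypothesis that every semisimple normal subgroup of $G$ contains $(\Gm_{,R})^2$, both the subgroup $J$ and the subgroup $G(K)^+$ lie inside $E_P(K)$, so that $G(K)=G(R)\cdot E_P(K)$; see the paragraph following Theorem~\ref{thm:ChGP}. The substance of the proof is therefore the injectivity assertion.

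To prove injectivity I would first carry out the standard structural reductions. Functoriality of $K_1^G$ under the central isogeny $G^{\scl}\to G^{\der}\subset G$, combined with the $(\Gm_{,R})^2$-hypothesis (which controls both the kernel and the cokernel of the induced map on $K_1$), reduces to $G$ semisimple simply connected. A further decomposition of $G^{\scl}$ as a product of Weil restrictions $\mathrm{Res}_{\tilde R/R}H$ of almost simple groups from finite étale covers $\tilde R/R$, each of which in characteristic zero is again a Laurent polynomial algebra over a finite extension of $k$, reduces us to $G$ almost simple simply connected.

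Next, I would factor the inclusion $R\hookrightarrow K$ through the chain
\[
R=A_0\subset A_1\subset\cdots\subset A_n=K,\qquad A_i=k((x_1))\cdots((x_i))[x_{i+1}^{\pm 1},\ldots,x_n^{\pm 1}],
\]
so that each $A_i$ is a Laurent polynomial ring over the field $F_i=k((x_1))\cdots((x_i))$. The step $A_{i-1}\hookrightarrow A_i$ replaces Laurent polynomial dependence on $x_i$ over $F_{i-1}$ by Laurent series dependence, while keeping the remaining Laurent polynomial variables intact. Proving that $K_1^G$ is injective at each such step reduces the theorem to the following one-variable Horrocks-type statement: for a field $F$ of characteristic zero and a reductive group $H$ over $F[t^{\pm 1},y_1^{\pm 1},\ldots,y_m^{\pm 1}]$ satisfying the hypotheses of the theorem, the map
\[
K_1^H\bigl(F[t^{\pm 1},y_1^{\pm 1},\ldots,y_m^{\pm 1}]\bigr)\to K_1^H\bigl(F((t))[y_1^{\pm 1},\ldots,y_m^{\pm 1}]\bigr)
\]
is injective.

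This one-variable statement is the main technical obstacle. My plan for it would be a patching argument on $\mathbb{P}^1$ over $F[y_1^{\pm 1},\ldots,y_m^{\pm 1}]$, comparing the two affine charts at $t=0$ and $t=\infty$ with the formal neighborhood $\Spec F[[t]][y_j^{\pm 1}]$ of $t=0$. The $(\Gm)^2$-hypothesis is precisely what enables a Bass--Quillen/Quillen-style patching lemma for the non-stable $K_1^H$-functor of isotropic reductive group schemes; Stavrova's polynomial and Laurent polynomial invariance results~\cite{St-poly} would identify $K_1^H$ of each affine chart with $K_1^H\bigl(F[y_j^{\pm 1}]\bigr)$; and a Galois descent along a totally ramified cover $t\mapsto t^d$ (possibly after enlarging $F$ by a finite extension containing enough roots of unity) would trivialize the loop-reductive twist of $H$ sufficiently that the polynomial-invariance tool becomes applicable.
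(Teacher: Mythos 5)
Your overall skeleton is the right one: induction on the number of variables, a one-variable statement comparing $F[t^{\pm 1},y_1^{\pm 1},\ldots,y_m^{\pm 1}]$ with $F((t))[y_1^{\pm 1},\ldots,y_m^{\pm 1}]$ proved by patching the charts at $t=0$ and $t=\infty$ against the formal neighbourhood, and a degree-$d$ ramified cover to deal with the loop twist. The patching ingredient you postulate is exactly Theorem~\ref{thm:E-decomp}, $E\bigl(R((t))\bigr)=E\bigl(R[[t]]\bigr)E\bigl(R[t,t^{-1}]\bigr)$, whose proof requires the relative root subscheme machinery of \S~\ref{ssec:rel} and a commutator estimate (it is not a formal consequence of the $(\Gm)^2$-hypothesis), together with Corollary~\ref{cor:t-t-inj}, which in turn needs $G(R[t])=G(R)E(R[t])$ for the \emph{non-constant} group over the Laurent polynomial ring.

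The genuine gap is in your last step, ``Galois descent along a totally ramified cover $t\mapsto t^d$ would trivialize the loop-reductive twist.'' There is no descent for non-stable $K_1$-functors: for a finite (ramified or \'etale) cover $A\to A'$ the map $K_1^G(A)\to K_1^{G_{A'}}(A')$ has no reason to be injective, so even if $G$ becomes constant over the cover you cannot transport an injectivity statement back down. This is precisely the obstacle that the ``diagonal argument'' (Lemma~\ref{lem:diag}) is designed to circumvent: one doubles the variables, mapping $k[x_i^{\pm 1}]$ into $k[z_i^{\pm 1},t_i^{\pm 1}]$ by $x_i\mapsto z_i$ --- a map that \emph{is} injective on $K_1^G$ because it has a retraction $t_i\mapsto 1$ --- and then shows that over the doubled ring the pullback $G_z$ becomes isomorphic to $G_w$, which is constant in the variables $t_i$, so that the homotopy-invariance results of~\cite{St-poly} apply. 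Without this device, or a genuine substitute for descent, your one-variable statement does not close. Two smaller points. First, the reduction to $G$ simply connected and almost simple is not justified: relating $\ker\bigl(K_1^G(R)\to K_1^G(K)\bigr)$ to the corresponding kernel for $G^{\scl}$ involves $H^1(-,C)$ and the radical torus and is not controlled by the $(\Gm)^2$-hypothesis; the paper works with a general reductive $G$ throughout and never makes this reduction. Second, for surjectivity the assertion that $J\subseteq E_P(K)$ is not what Theorem~\ref{thm:ChGP} gives; what is actually used (Corollary~\ref{cor:dens-iso}) is that the image of $J$ in $K_1^{G,P}(K)$ is trivial because $J$ is built from pro-solvable groups in $k$-vector spaces while $K_1^{G,P}(K)$ has finite exponent.
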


Note that this theorem implies the above-mentioned results of~\cite{ChGP-Wh}, since for a quasi-split simply
connected group $G$, one has $K_1^G(K)=1$ for any field $K$ (see~\cite{Gil}).

Theorem~\ref{thm:main} is proved in~\S~\ref{ssec:mainproof} by combining the results of~\cite{PS,St-poly} on the structure of
isotropic groups over general commutative rings with a special "diagonal argument"{} trick inspired
by some unpublished work of Ivan Panin elaborating on~\cite[Prop. 7.1]{OPa}; see Lemma~\ref{lem:diag}. The
assumption that $G$ contains $(\Gm_{,R})^2$ and not just $\Gm_{,R}$ goes back to~\cite{Sus,PS}, the reason
being that $\SL_2(k[x,y])$ is not equal to its subgroup $E_2(k[x,y])$ generated by upper and lower unitriangular matrices,
and our methods fail. The actual statement of Theorem~\ref{thm:main}
for $K_1^G=\SL_2/E_2$ is trivially true if $n=1$ since $k[x^{\pm 1}]$ is Euclidean, and false
if $n\ge 3$ by~\cite{BaMo}; the case $n=2$ is not known at present, e.g.~\cite{Abr}.

As an immediate corollary of Theorem~\ref{thm:main}, we obtain the following result on Lie tori. Recall that a Lie torus is
a $\Delta\times\Lambda$-graded Lie algebra, where $\Delta$ is
an irreducible finite root system joined with $0$ and $\Lambda\cong\ZZ^n$,
satisfying certain axioms similar to the standard generators and relations axiomatics of complex simple Lie
algebras; see~Definition~\ref{defn:LT} in \S~\ref{sec:Lie}.

\begin{thm}\label{thm:Lie}
Let $k$ be an algebraically closed field of characteristic $0$,
$\Delta$ be a finite root system of rank $\ge 2$, and $\Lambda=\ZZ^n$, $n\ge 1$.
Let $\La$ be a centerless Lie $\Lambda$-torus of type $\Delta$ over $k$ that is finitely generated over its
centroid $R\cong k[x_1^{\pm 1},\ldots,x_n^{\pm 1}]$.
Let $G=\Aut_{R}(\La)^\circ$ be the connected component of the algebraic automorphism group of $\La$ as an $R$-Lie algebra,
and set
$$
E_{exp}(\La)=\l<\exp(\ad_x),\ x\in\La^\lambda_\alpha,\ (\alpha,\lambda)\in \Delta\times\Lambda,\ \alpha\neq 0\r>.
$$
Then there is an isomorphism of groups
\begin{equation}\label{eq:hom-Lietori}
G(R)/E_{exp}(\La)\cong K_1^G\bigl(k((x_1))\ldots((x_n))\bigr).
\end{equation}
\end{thm}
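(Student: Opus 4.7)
The plan is to realize $G$ as an isotropic adjoint semisimple group scheme satisfying the hypotheses of Theorem~\ref{thm:main}, identify $E_{exp}(\La)$ with the elementary subgroup $E_P(R)$ for a minimal parabolic $R$-subgroup $P\subset G$, and then invoke Theorem~\ref{thm:main}.

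First I would use the Realization theorem together with the structural results on Lie tori alluded to in the introduction (notably \cite[Theorem~3.3.1]{ABFP} and the work of Gille--Pianzola) to conclude that $G=\Aut_R(\La)^\circ$ is an adjoint simple isotropic reductive $R$-group scheme with absolute root system $\Delta$, and that the $\Delta$-grading on $\La$ is induced by the adjoint action of a split $R$-subtorus $T_s\subset G$ of rank $\rank(\Delta)\ge 2$. In particular $G$ contains $(\Gm_{,R})^2$. Moreover, the introduction recalls that when $k$ is algebraically closed and $G$ is the adjoint group of a Lie torus, $G$ automatically carries a maximal $R$-torus. Thus every hypothesis of Theorem~\ref{thm:main} is in place for $G$.

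Next I would identify $E_{exp}(\La)$ with $E_P(R)$. Relative to $T_s$ one obtains a root subgroup scheme $U_\alpha\subseteq G$ for each $\alpha\in\Delta\setminus\{0\}$, whose Lie algebra is $\bigoplus_{\lambda\in\Lambda}\La_\alpha^\lambda$; since the $\Delta$-grading is finitely supported, $\ad_x$ is nilpotent for each $x\in\La_\alpha^\lambda$, so $\exp(\ad_x)\in U_\alpha(R)$ makes sense. Fixing a system of positive roots and the associated minimal parabolic $P$, the unipotent radicals $U_P$ and $U_{P^-}$ admit filtrations by the $U_\alpha$'s, so
$$
E_P(R)=\langle U_P(R), U_{P^-}(R)\rangle=\bigl\langle U_\alpha(R):\alpha\in\Delta\setminus\{0\}\bigr\rangle\supseteq E_{exp}(\La).
$$
The reverse inclusion would follow once one shows that every element of $U_\alpha(R)$ is a product of exponents $\exp(\ad_x)$ with $x\in\bigoplus_\lambda\La_\alpha^\lambda$, using Chevalley-type commutator formulas in the non-simply-laced cases to disentangle $U_\alpha$ from $U_{2\alpha}$. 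Granting this, Theorem~\ref{thm:main} applied to $G$ yields
$$
G(R)/E_{exp}(\La)=K_1^G(R)\xrightarrow{\cong} K_1^G\bigl(k((x_1))\cdots((x_n))\bigr),
$$
which is precisely~\eqref{eq:hom-Lietori}.

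The main obstacle will be the identification $E_{exp}(\La)=E_P(R)$: one must check that the abstractly generated subgroup of exponents exhausts the whole $R$-points of each root subgroup $U_\alpha$, not merely a dense or proper subgroup, bridging the Lie-theoretic description of the $\Delta\times\Lambda$-graded algebra $\La$ and the scheme-theoretic description of the root subgroups of $G$. Once this is done, the rank $\ge 2$ hypothesis---and hence the Petrov--Stavrova independence result recalled in \S~\ref{sec:prel}---ensures that $\bigl\langle U_\alpha(R)\bigr\rangle_\alpha$ coincides with $E_P(R)$ for every minimal parabolic, and Theorem~\ref{thm:main} finishes the proof.
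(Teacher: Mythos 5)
Your proposal follows essentially the same route as the paper: build the split torus of rank $\rank\Delta\ge 2$ from the $Q$-grading, obtain the opposite parabolics $P^{\pm}$ from a choice of positive system via Lemma~\ref{lem:T-P}, identify $E_{exp}(\La)$ with $E_{P^+}(R)$, note that $G$ is loop reductive (hence has a maximal $R$-torus) over an algebraically closed $k$, and apply Theorem~\ref{thm:main}. The one step you leave as ``granting this'' --- that the exponentials exhaust the $R$-points of the unipotent radicals --- is precisely what the paper's Example~\ref{exa:exp} (for adjoint $G$ in characteristic $0$ the relative root subscheme satisfies $X_\alpha(v)=\exp(\ad_v)$) together with Lemma~\ref{lem:rootels}(iv) (the scheme isomorphism $W\bigl(\bigoplus_\alpha V_\alpha\bigr)\cong U_{\Delta^{\pm}}$) supplies, so the gap is filled by results already recorded in \S 3.2 rather than by a new commutator computation.
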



Using the same methods as in Theorem~\ref{thm:main}, we also prove a similar statement on
${\mathcal R}$-equivalence class groups of Yu. Manin.
This application was suggested by Philippe Gille. The proof is given in \S~\ref{ssec:2fields}.

\begin{thm}\label{thm:2fields}
Let $k$ be a field of characteristic $0$. Let $G$ be a reductive group scheme
over $R=k[x_1^{\pm 1},\ldots,x_n^{\pm 1}]$ having a maximal $R$-torus $T$.
Then the natural map of ${\mathcal R}$-equivalence class groups
$$
G\bigl(k(x_1,\ldots,x_n)\bigr)/{\mathcal R}\to G\bigl(k((x_1))\ldots((x_n))\bigr)/{\mathcal R}
$$
is an isomorphism.
\end{thm}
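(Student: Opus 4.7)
The plan is to follow the two-step strategy used for Theorem~\ref{thm:main}: first establish surjectivity as a consequence of Theorem~\ref{thm:ChGP}, then deduce injectivity from the diagonal trick of Lemma~\ref{lem:diag}. Write $F=k(x_1,\ldots,x_n)$ and $K=k((x_1))\ldots((x_n))$.

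For surjectivity I would argue that each of the three generating subsets in the decomposition $G(K)=\l<G(R),J,G(K)^+\r>$ of Theorem~\ref{thm:ChGP} is absorbed into the image of $G(R)\subseteq G(F)$ modulo $\RR$. Any $K$-point of a $K$-subgroup of $G_K$ isomorphic to $\Ga$ is $\RR$-trivial by the very definition of $\RR$-equivalence, so the normal subgroup $G(K)^+$ vanishes in $G(K)/\RR$. The subgroup $J$ is, by Theorem~\ref{thm:ChGP}, a quotient of a group admitting a composition series whose quotients are pro-solvable in $k$-vector spaces; in characteristic zero every $k$-vector space is a $k$-rational variety and hence $\RR$-trivial, and $\RR$-triviality is preserved by quotients and extensions, so $J$ maps to $1$ in $G(K)/\RR$ as well. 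This reduces surjectivity to the tautological surjectivity of $G(R)\to G(K)/\RR$.

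For injectivity, take $g\in G(F)$ whose image in $G(K)$ is $\RR$-equivalent to $1$: there exist morphisms $\phi_i:U_i\to G_K$ with $U_i\subseteq \Aff^1_K$ open, and $K$-points $a_i,b_i\in U_i(K)$, forming a chain of direct $\RR$-equivalences from $1$ to $g$. Each $\phi_i$ is encoded by finitely many elements of $K$, namely by an element of $G(K[t,f_i^{-1}])$ for some $f_i\in K[t]$. Invoking Lemma~\ref{lem:diag} together with the structural results of~\cite{PS,St-poly}, I would simultaneously approximate this finite package of $K$-data by $F$-data, the discrepancies being absorbable into $\RR$-trivial elements of $G(F)$. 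Applying this approximation to each link of the chain in parallel then produces an $\RR$-equivalence from $1$ to $g$ inside $G(F)$.

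The main obstacle is the adaptation of Lemma~\ref{lem:diag}, whose form in~\S\ref{ssec:mainproof} is geared towards descending elements of $E_P(K)$ to $E_P(F)$, to the situation where the equivalence relation is encoded by morphisms from open subsets of $\Aff^1$ rather than by products of elementary generators. One must recast the diagonal trick in a version flexible enough to handle such rational parametrizations, and verify that the errors introduced by approximating the coefficients of the $\phi_i$ remain $\RR$-trivial over $F$. Note that, unlike in Theorem~\ref{thm:main}, the hypotheses impose neither the existence of a parabolic $R$-subgroup nor the isotropy condition $(\Gm_{,R})^2$; this is consistent with the fact that over a characteristic-zero field unipotent groups are automatically $\RR$-trivial, so the $\SL_2$-type obstructions that motivate the $(\Gm)^2$ assumption do not arise here.
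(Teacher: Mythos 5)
The injectivity half of your proposal contains the genuine gap. You correctly identify that Lemma~\ref{lem:diag} must enter, but the approximation scheme you sketch (encoding each link of the $\RR$-equivalence chain by finitely many elements of $K$, approximating them by elements of $F$, and absorbing the errors) is not carried out, and you yourself flag that you do not know how to adapt the diagonal trick to rational parametrizations. The paper's proof needs no approximation at all: it is a purely formal diagram chase. The point of the substitution $z_i=w_it_i^d$ is that, by Lemma~\ref{lem:diag}, the pulled-back group becomes isomorphic to $G_w$, which is \emph{constant} in the variables $t_i$, i.e.\ defined over $k(w_1,\ldots,w_n)$. For a group constant in $t$ one has two classical isomorphisms: $H(l)/\RR\cong H(l(t))/\RR$ for reductive $H$ over an infinite field $l$ (Voskresenski\u{\i}, \cite[\S 16.2, Prop.~2]{Vos}), and Gille's specialization theorem $H(l)/\RR\cong H\bigl(l((t))\bigr)/\RR$ \cite[Corollaire 0.3]{Gil-spec}. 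These give that the bottom horizontal map $G_w\bigl(k(w_1,\ldots,w_n,t_1,\ldots,t_n)\bigr)/\RR\to G_w\bigl(k(w_1,\ldots,w_n)((t_1))\ldots((t_n))\bigr)/\RR$ is an isomorphism, and since the composite of $j_1$ with vertical isomorphisms equals the composite of isomorphisms, $j_1$ is injective. The decisive ingredient you are missing is the specialization theorem of \cite{Gil-spec}; without it (or something equivalent) the constancy achieved by the diagonal trick cannot be exploited.

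On surjectivity, your treatment of the subgroup $J$ is not sound as written. $J$ is an abstract subgroup of $G(K)$; the hypothesis of Theorem~\ref{thm:ChGP} describes its abstract group structure (built from pro-solvable groups in $k$-vector spaces), not its position inside the variety $G$. That a $k$-vector space is a rational variety says nothing about whether the corresponding \emph{points of $G$} are $\RR$-trivial, so one cannot conclude that $J$ dies in $G(K)/\RR$ by "rationality of vector spaces." The paper's argument (Corollary~\ref{cor:2fields-surj}) is different: in characteristic $0$ the group $J$ is divisible, while $G(K)/\RR$ has finite exponent by \cite[\S 17.1, Corollary 2]{Vos}, so the image of $J$ in $G(K)/\RR$ is a divisible group of finite exponent, hence trivial. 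Your treatment of $G(K)^+$ is fine, and the reduction of surjectivity to Theorem~\ref{thm:ChGP} is the same as in the paper once the argument for $J$ is repaired.
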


\medskip
{\it Acknowledgements.}
This paper grew out of my conversations with Vladimir Chernousov and Philippe Gille, and I am very much
indebted to them for their numerous comments, suggestions, and corrections. I am also grateful to the
anonymous referee for providing a simplified proof of Lemma~\ref{lem:diag}.

\section{Preliminaries}\label{sec:prel}

\subsection{Elementary subgroups, non-stable $K_1$-functors, and $\mathcal{R}$-equivalence}

Let $A$ be a commutative ring. Let $G$ be an isotropic reductive group scheme over $A$, and
let $P$ be a parabolic subgroup of $G$ in the sense of~\cite{SGA3}.
Since the base $\Spec A$ is affine, the group $P$ has a Levi subgroup $L_P$~\cite[Exp.~XXVI Cor.~2.3]{SGA3}.
There is a unique parabolic subgroup $P^-$ in $G$ which is opposite to $P$ with respect to $L_P$,
that is $P^-\cap P=L_P$, cf.~\cite[Exp. XXVI Th. 4.3.2]{SGA3}.  We denote by $U_P$ and $U_{P^-}$ the unipotent
radicals of $P$ and $P^-$ respectively.

\begin{defn}
The \emph{elementary subgroup $E_P(A)$ corresponding to $P$} is the subgroup of $G(A)$
generated as an abstract group by $U_P(A)$ and $U_{P^-}(A)$.
\end{defn}

Note that if $L'_P$ is another Levi subgroup of $P$,
then $L'_P$ and $L_P$ are conjugate by an element $u\in U_P(A)$~\cite[Exp. XXVI Cor. 1.8]{SGA3}, hence
$E_P(A)$ does not depend on the choice of a Levi subgroup or of an opposite subgroup
$P^-$, respectively. We suppress the particular choice of $L_P$ or $P^-$ in this context.

\begin{defn}
A parabolic subgroup $P$ in $G$ is called
\emph{strictly proper}, if it intersects properly every normal semisimple subgroup of $G$.
\end{defn}



The following theorem combines several results of~\cite{PS} and~\cite[Exp. XXVI, \S 5]{SGA3}.

\begin{thm}\label{thm:EE}\cite[Theorem 2.1]{St-poly}
Let $G$ be a reductive group scheme over a commutative ring $A$, and let $R$ be a commutative $A$-algebra.

(i) Assume that $A$ is a semilocal ring. Then the subgroup $E_P(R)$ of $G(R)$ is the same for any
minimal parabolic $A$-subgroup $P$ of $G$. If, moreover, $G$ contains a strictly proper parabolic $A$-subgroup,
the subgroup $E_P(R)$ is the same for any strictly proper parabolic $A$-subgroup $P$.

(ii) If $A$ is not necessarily semilocal, but for every maximal ideal $m$ in $A$, every normal semisimple
subgroup of $G_{A_m}$ contains $(\Gm_{,A_m})^2$, then the subgroup $E_P(R)$ of $G(R)=G_R(R)$ is the same for any
strictly proper parabolic $R$-subgroup $P$ of $G_R$.

In both these cases $E_P(A)$ is normal in $G(A)$.
\end{thm}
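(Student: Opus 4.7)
The plan is to reduce everything, via base change along $A\to R$, to statements about $E_P(A)$ for $A$-parabolics $P$ in $G_A$, and then to extract the two independence assertions and the normality statement from the structure theory of isotropic reductive groups over commutative rings: the relative root system, the associated root subgroups $U_\alpha\subseteq G$, and the Chevalley-type commutator formulas (with higher-weight correction terms) among them. After this setup, one has that every $U_P(A)$ is a product of the $U_\alpha(A)$ for $\alpha$ in the positive system cut out by $P$ relative to a maximal split subtorus $S\subseteq G$.

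For part (i), assume $A$ is semilocal. Then $G$ admits a maximal split torus $S$, and by \cite[Exp.~XXVI]{SGA3} any two minimal parabolic $A$-subgroups of $G$ are $G(A)$-conjugate, so the subgroups $E_P(A)$ are at least pairwise conjugate. The key auxiliary lemma is that the conjugating element can be chosen inside $E_P(A)$ itself; this follows from the semilocal fact that $\SL_2(A)$ is generated by its elementary matrices, applied inside each rank-one subgroup associated with a simple relative root, so that Weyl-group representatives can be lifted into $E_P(A)$. For the strictly proper case, fix a minimal parabolic $P_0$ contained in a given strictly proper $P$. The inclusion $E_P(A)\subseteq E_{P_0}(A)$ is immediate since $U_P\subseteq U_{P_0}$ and $U_{P^-}\subseteq U_{P_0^-}$; the reverse inclusion, which is the more delicate one, comes from expressing each root subgroup of $P_0$ that lies in the Levi of $P$ as a product of commutators of elements of $U_P(A)$ and $U_{P^-}(A)$ via the relative Chevalley commutator formulas, using that the "missing" root subgroups sit in a Levi of positive semisimple rank.

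For part (ii), when $A$ is not semilocal, there is in general no global maximal split torus, so I would work Zariski-locally on $\Spec A$. The hypothesis that every normal semisimple subgroup of $G_{A_m}$ contains $(\Gm)^2$ ensures that at every closed point the relative root system has rank at least $2$ on every irreducible component. The crucial ingredient, established in \cite{PS}, is an extraction lemma: under this rank-$\ge 2$ assumption, every element of any root subgroup $U_\alpha(A)$ can, after restriction to a suitable distinguished open cover, be written as a product of commutators of elements drawn from a given pair $(U_{P}(A),U_{P^-}(A))$. A Zariski patching argument over this cover then upgrades the local identifications into the global equality $E_P(A)=E_{P'}(A)$ for any two strictly proper $R$-parabolics $P,P'$ of $G_R$.

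Normality follows at once: for $g\in G(A)$, conjugation sends $U_P(A)$ to $U_{gPg^{-1}}(A)$ and preserves the type of the parabolic, so $gE_P(A)g^{-1}=E_{gPg^{-1}}(A)$, and Steps~2--3 give $E_{gPg^{-1}}(A)=E_P(A)$ in each case. The main obstacle is the non-semilocal case: the extraction lemma and the Zariski patching are both technically delicate, requiring careful bookkeeping of the higher-weight correction terms in the commutator formulas of \cite{PS} and compatibility of root subgroups across overlaps. The rank-$\ge 2$ hypothesis is unavoidable at this step, since for rank one the corresponding statement would force $E_2(A)=\SL_2(A)$ for arbitrary $A$, which already fails over polynomial rings in two variables as noted in the introduction.
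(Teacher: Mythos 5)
The paper does not actually prove this statement: it is imported verbatim from \cite[Theorem 2.1]{St-poly}, and the text only remarks that it ``combines several results of \cite{PS} and \cite[Exp. XXVI, \S 5]{SGA3}.'' Your outline follows the same strategy as those sources (conjugacy of minimal parabolics over semilocal rings with the conjugator taken in the elementary subgroup, commutator extraction via relative root subschemes, localization and patching in the non-semilocal case, normality deduced from independence), so it is not a different route; but as written it has two genuine gaps.

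First, the theorem asserts the independence of $E_P(R)$ for an \emph{arbitrary} $A$-algebra $R$, and your opening ``reduction via base change to statements about $E_P(A)$'' is not a reduction: an equality of subgroups of $G(A)$ does not base change to an equality of subgroups of $G(R)$. What actually carries the argument to all $R$ simultaneously is (a) that the element conjugating one minimal $A$-parabolic to another lies in $E_P(A)\subseteq E_P(R)$ and conjugates the group \emph{schemes} $U_P,U_{P^-}$ to $U_Q,U_{Q^-}$, hence identifies $E_P(R)$ with $E_Q(R)$ for every $R$ at once; and (b) that the extraction of a Levi root subgroup from commutators is an identity of morphisms of $A$-schemes (via the polynomial maps $N_{\alpha\beta ij}$ of Lemma~\ref{lem:rootels}), hence evaluates on $R$-points. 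You state (a) only for $R=A$ and never address (b); without one of these the statement for general $R$ is not established. Second, in part (ii) the parabolics range over strictly proper \emph{$R$-subgroups of $G_R$}, not $A$-subgroups, so the local analysis and the patching must be carried out over $\Spec R$, after first transferring the hypothesis ``rank $\ge 2$ at every maximal ideal of $A$'' to every maximal ideal of $R$ (this does hold, since every prime of $A$ sits inside a maximal one and split subtori survive localization, but it must be said); localizing on $\Spec A$ as you propose addresses the wrong ring. A smaller point: reducing the conjugacy of two arbitrary minimal parabolics over semilocal $A$ to lifting Weyl representatives presupposes that the two parabolics already contain a common maximal split torus, and arranging that is itself a nontrivial semilocal statement from \cite[Exp. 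XXVI, \S 5]{SGA3} and \cite{PS} which the $\SL_2$-generation fact alone does not supply. Your deduction of normality from independence, on the other hand, is correct as stated.
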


\begin{defn}
Under the assumptions of Theorem~\ref{thm:EE} (i) or (ii), we call $E_P(R)$ \emph{the elementary
subgroup} of $G(R)$ and denote it by $E(R)$.
\end{defn}

\begin{defn}
The functor $K_1^{G,P}(R)=G(R)/E_P(R)$ on the category of commutative $A$-algebras $R$
is called the \emph{non-stable $K_1$-functor, or the Whitehead group} associated to $G$ and $P$.
Under the assumptions of Theorem~\ref{thm:EE} (i) or (ii), we write $K_1^G$ instead of $K_1^{G,P}$.
\end{defn}

Note that the normality of the elementary subgroup implies that $K_1^G$ is a group-valued functor.

Non-stable $K_1$ functors are closely related to ${\mathcal R}$-equivalence class groups introduced by Yu. Manin
in~\cite{Man}.

\begin{defn}
Let $X$ be an algebraic variety over a field $k$.
Denote by $k[t]_{(t),(t-1)}$ the semilocal ring
of the affine line $\Aff^1_k$ over $k$ at the points $0$ and $1$. Two points $x_0,x_1\in X(k)$ are
called \emph{directly ${\mathcal R}$-equivalent}, if there is $x(t)\in X(k[x]_{(x),(x-1)})$ such that $x(0)=x_0$ and $x(1)=x_1$.
The \emph{${\mathcal R}$-equivalence relation} on $X(k)$ is the equivalence relation generated by direct ${\mathcal R}$-equivalence.
The \emph{${\mathcal R}$-equivalence class group $G(k)/{\mathcal R}$} of an algebraic $k$-group $G$ is the quotient of $G(k)$
by the ${\mathcal R}$-equivalence class of the neutral element $1\in G(k)$.
\end{defn}

It is easy to see that the ${\mathcal R}$-equivalence class of the neutral element $1\in G(k)$ is a normal subgroup
of $G(k)$, so $G(k)/{\mathcal R}$ is indeed a group. Apart from that, if $G$ has a proper parabolic subgroup $P$
over $k$, then all elements of $E_P(k)$ are ${\mathcal R}$-equivalent to $1$, so $K_1^{G,P}(k)$ surjects
onto $G(k)/{\mathcal R}$. If $G$ semisimple and simply connected, and $P$ is strictly proper, then
$K_1^{G,P}(k)=K_1^G(k)\cong G(k)/{\mathcal R}$ by~\cite[Th\'eor\`eme 7.2]{Gil}.

In the present paper we are mainly interested in values of $K_1^G$ on Laurent polynomial
rings over a field. We will use the following result.

\begin{thm*}\cite[Corollary 6.2]{St-poly}
Let $G$ be a simply connected semisimple algebraic group over a field $k$, such that every
semisimple normal subgroup of
$G$ contains $(\Gm_{,k})^2$. For any $m,n\ge 0$, there are natural isomorphisms
$$
K_1^G(k)\cong K_1^G\l(k[Y_1,\ldots,Y_m,X_1,X_1^{-1},\ldots,X_n,X_n^{-1}]\r).
$$
\end{thm*}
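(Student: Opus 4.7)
The plan is a double induction on $(m,n)$ using separate homotopy invariance steps for ordinary and Laurent variables. First I would reduce to the simple case: a simply connected semisimple $k$-group decomposes as a direct product $G = \prod_i G_i$ of simple simply connected factors, the elementary subgroup and hence $K_1^G$ split accordingly, and the $(\Gm)^2$-hypothesis is inherited by each $G_i$. So one may assume that $G$ is simple simply connected of $k$-isotropy rank at least two.

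For the polynomial step, write $A = k[Y_1, \ldots, Y_{m-1}, X_1^{\pm 1}, \ldots, X_n^{\pm 1}]$ and try to show that $K_1^G(A) \to K_1^G(A[Y])$ is an isomorphism. Injectivity follows from the specialization $Y \mapsto 0$. For surjectivity, given $g \in G(A[Y])$, the element $h(Y) = g(Y)\, g(0)^{-1}$ is trivial at $Y=0$, and one must show that such ``based'' elements lie in $E(A[Y])$. I would establish this by a Quillen-style local--global principle: reduce via patching to proving that for every maximal ideal $\mathfrak m \subset A$ the localization of $h$ lies in $E(A_{\mathfrak m}[Y])$, and then treat the semilocal case by a Lindel--Popescu-type argument combined with the generation results assembled in Theorem~\ref{thm:EE}.

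For the Laurent step, write $B = k[Y_1,\ldots, Y_m, X_1^{\pm 1}, \ldots, X_{n-1}^{\pm 1}]$ and aim for the isomorphism $K_1^G(B) \xrightarrow{\sim} K_1^G(B[X^{\pm 1}])$. Injectivity comes from evaluation at $X=1$. For surjectivity, given $g \in G(B[X^{\pm 1}])$, I would patch over the cover $\Spec B[X] \cup \Spec B[X^{-1}]$ of $\PP^1_B \setminus \{0,\infty\}$: after an elementary modification, factor $g$ as $g_+(X)\cdot g_-(X^{-1})$ with $g_\pm \in G(B[X^{\pm 1}])$, and then kill each factor using the polynomial invariance just established. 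The required factorization is a Horrocks-type theorem for isotropic reductive groups over regular rings, producing a $G$-torsor on $\PP^1_B$ that is trivial on the two affine charts and hence globally trivial.

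The main obstacle is the Laurent step: the naive analogue of the Bass--Heller--Swan decomposition produces ``defect'' terms of $K_0$- and $NK_1$-type that must be shown to vanish in the non-stable setting. The role of the $(\Gm)^2$-hypothesis is precisely to supply enough commutator room inside $G$ to annihilate these defects via Suslin-type identities among root subgroup elements --- in a manner unavailable for $\SL_2$ over $k[X,Y]$, as noted in the discussion following Theorem~\ref{thm:main}.
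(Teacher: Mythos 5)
First, a point of reference: the paper does not prove this statement at all --- it is imported verbatim from \cite[Corollary 6.2]{St-poly} --- so the comparison below is with the argument of that reference rather than with anything internal to the present text. Your overall architecture (reduction to the simple case, double induction on the variables, polynomial step via Quillen-type local--global patching plus the semilocal generation results, injectivity of each step via the evident retractions $Y\mapsto 0$ and $X\mapsto 1$) does match the shape of the actual proof: the polynomial step is exactly the main homotopy-invariance theorem of \cite{St-poly} for $G$ constant over $k$ and a regular base containing $k$, and the injectivity assertions are fine.

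The gap is in the surjectivity half of the Laurent step, which is the entire content of the corollary beyond the polynomial case. You justify the factorization $g\equiv g_+(X)\,g_-(X^{-1})$ by asserting that a $G$-torsor on $\PP^1_B$ which is trivial on the two affine charts is globally trivial. That is false: such torsors are classified by the double cosets $G(B[X])\backslash G(B[X^{\pm 1}])/G(B[X^{-1}])$, which already for $B=k$ a field are indexed by dominant cocharacters (Grothendieck--Harder, i.e.\ the Birkhoff decomposition); for $\GL_2$ the gluing matrix $\mathrm{diag}(X,X^{-1})$ produces $\mathcal O(1)\oplus\mathcal O(-1)\neq \mathcal O^2$. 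What rescues the statement is simple connectedness, not the $(\Gm)^2$ hypothesis: since the cocharacter lattice equals the coroot lattice, the defect $\lambda(X)$ is a product of elements $h_\alpha(X)$ with $X$ a unit, and these lie in $E\bigl(B[X^{\pm 1}]\bigr)$ by the Steinberg relations. (The $(\Gm)^2$ hypothesis is instead what guarantees normality of $E$, its independence of the parabolic, and the local--global machinery; your attribution of the ``defect-killing'' to $(\Gm)^2$ is a misdiagnosis, as the discussion of $\SL_2$ after Theorem~\ref{thm:main} concerns a different failure.) Moreover, even once the target statement is corrected to ``factorization up to $E$,'' proving it over a general regular base $B$ and for the relevant nonsplit groups is a substantive theorem, not a formal patching argument; the proof in \cite{St-poly} does not route through $\PP^1$ at all, but combines the monic-localization injectivity (Lemma~\ref{lem:f}) with decompositions of the elementary subgroup obtained from relative root subschemes, in the spirit of Theorem~\ref{thm:E-decomp} of the present paper. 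As written, your Laurent step does not go through.
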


We will also use the following lemma, that was established in~\cite[Corollary 5.7]{Sus} for $G=\GL_n$, and
in~\cite[Prop. 3.3]{Abe} for most Chevalley groups; for isotropic groups
it was proved in~\cite[Lemma 6.1]{St-poly}, although the statement was slightly weaker than the present one.
The idea goes back to~\cite{Q}.

\begin{lem}\label{lem:f}
Let $A$ be a commutative ring, and let $G$ be a reductive group scheme over $A$, such that every semisimple normal subgroup
of $G$ is isotropic. Assume moreover that for any maximal ideal $m\subseteq A$,
 every semisimple normal subgroup of $G_{A_m}$ contains $(\Gm_{,A_m})^2$.
Then for any monic polynomial $f\in A[t]$ the natural homomorphism
$$
K_1^G(A[t])\to K_1^G(A[t]_f)
$$
is injective.
\end{lem}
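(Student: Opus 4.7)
The plan is to adapt the Quillen--Suslin substitution strategy to isotropic reductive group schemes, using the relative root subgroup formalism of Petrov--Stavrova invoked in Theorem~\ref{thm:EE}. Let $g \in G(A[t])$ be such that $g \in E(A[t]_f)$; we must show $g \in E(A[t])$. Using the generators of $E(A[t]_f)$ supplied by Theorem~\ref{thm:EE}, I would write
\[
g = \prod_i u_{\alpha_i}\l(a_i(t)\,/\,f(t)^{N_i}\r),
\]
where each $u_{\alpha_i}$ lies in a relative root unipotent subgroup $U_{\alpha_i}$ associated to a strictly proper parabolic $A$-subgroup of $G$, with $a_i(t) \in A[t]$ and $N_i \ge 0$. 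The $(\Gm)^2$ hypothesis ensures that the relative root system of $G$ has rank $\ge 2$ at every maximal ideal, so the commutation relations among the $U_\alpha$'s from the Petrov--Stavrova theory are rich enough to manipulate these generators.

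Next, introduce a fresh indeterminate $s$ and perform the Suslin substitution $t \mapsto t + s \cdot f(t)^M$ with $M$ large. Since $f$ is monic, for $M$ sufficiently large (depending on the $N_i$) the substituted denominators $f(t + s f(t)^M)^{N_i}$ can be cleared: using root subgroup commutation relations to rearrange the substituted product, one rewrites $g(t + s f(t)^M) \in G(A[t,s])$ as a product of elementary generators $u_\beta(r_\beta(t,s))$ with coefficients $r_\beta \in A[t,s]$, exhibiting $g(t + s f(t)^M) \in E(A[t,s])$. Specializing at $s = 0$ recovers $g(t)$ and places it in $E(A[t])$, yielding the desired injectivity.

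The main obstacle is the denominator-clearing step. For $G = \GL_n$ this is Suslin's classical maneuver, built on the commutator $[e_{ij}(a), e_{jk}(b)] = e_{ik}(ab)$; for a general isotropic $G$ one must work with the relative-root commutators of Petrov--Stavrova, and carefully track the dependence of $M$ on the exponents $N_i$ and the combinatorics of the relative root system. The rank-$\ge 2$ condition is exactly what makes enough commutator combinations available to absorb the denominators; its absence is why the analogous statement fails for $\SL_2/\EE_2$ over multivariate polynomial rings, as noted in the introduction. It may also be convenient to begin with a Quillen-type local-to-global reduction to the case of $A$ local, where the relative root subgroup structure simplifies and the commutation relations become fully explicit.
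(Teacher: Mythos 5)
Your overall strategy --- adapting Suslin's localization techniques to isotropic groups via the Petrov--Stavrova relative root subgroups, with the rank $\ge 2$ hypothesis powering the commutator calculus --- is exactly the route the paper takes: its proof of Lemma~\ref{lem:f} consists of the single remark that one repeats the proof of \cite[Corollary 5.7]{Sus}, replacing Suslin's Theorem 5.1 (the local--global principle) by \cite[Theorem 1.1]{St-poly} and Suslin's Lemma 3.7 by \cite[Lemma 2.3]{St-poly}. However, the concrete mechanism you propose has a genuine gap at its central step. Under the substitution $t\mapsto t+s\,f(t)^M$ one has $f\bigl(t+s f(t)^M\bigr)=f(t)\cdot\bigl(1+s f(t)^{M-1}r(t,s)\bigr)$, and the second factor is not a unit of $A[t,s]$; hence each individual generator $u_{\alpha_i}\bigl(a_i(t+sf(t)^M)/f(t+sf(t)^M)^{N_i}\bigr)$ still carries a genuine denominator $f(t)^{N_i}$, and no choice of $M$ clears it. Worse, the statement you aim for, $g\bigl(t+sf(t)^M\bigr)\in E(A[t,s])$, specializes at $s=0$ to precisely the conclusion $g(t)\in E(A[t])$ that you are trying to prove, so the substitution by itself gains nothing: any honest proof of the substituted statement already contains a proof of the original one.

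The actual Suslin maneuver is a \emph{dilation/telescoping} argument, not a plain substitution: one works with an auxiliary element $\sigma(Y)$ over $B_b[Y]$ normalized so that $\sigma(0)=1$, writes $\sigma(b^KY)=\sigma(b^KY)\sigma(0)^{-1}$, telescopes the two products into a product of conjugates $\tau\,u_\alpha(z)\,\tau^{-1}$ with $z$ divisible by $b^{K-N}$, and then invokes a conjugation estimate of the form $\tau\,E(b^LB)\,\tau^{-1}\subseteq E(B)$ for $L$ large; this is where rank $\ge 2$ genuinely enters (compare the estimate~\eqref{eq:nNM} in the proof of Theorem~\ref{thm:E-decomp}). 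The normalization $\sigma(0)=1$ and the comparison of $\sigma$ at two arguments congruent modulo a high power of $b$ are what make the denominators cancel \emph{across} factors; your sketch has neither. Moreover, the reduction to the local case via the local--global principle is not an optional convenience but an essential ingredient, and it is itself delicate here because $f(0)$ need not be invertible, so one cannot simply evaluate the product expansion of $g$ at $t=0$ to normalize $g(0)=1$. To repair the proposal you would need to (i) prove the dilation lemma for relative root elements in the form just described, (ii) prove the conjugation estimate, and (iii) assemble them with the local--global principle as in \cite[Corollary 5.7]{Sus} --- which is exactly the content the paper imports wholesale from \cite{St-poly} and \cite{Sus}.
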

\begin{proof}
The proof goes exactly as in~\cite[Corollary 5.7]{Sus} using Theorem~1.1 of~\cite{St-poly} in place of
Theorem 5.1 of~\cite{Sus}, and Lemma 2.3 of~\cite{St-poly} in place of Lemma~3.7 of~\cite{Sus}.
\end{proof}

\subsection{Torus actions on reductive groups}

Let $R$ be a commutative ring with 1, and let $S=(\Gm_{,R})^N=\Spec(R[x_1^{\pm 1},\ldots,x_N^{\pm 1}])$
be a split $N$-dimensional torus over $R$. Recall that the character group
$X^*(S)=\Hom_R(S,\Gm_{,R})$ of $S$ is canonically isomorphic to $\ZZ^N$.
If $S$ acts $R$-linearly on an $R$-module $V$, this module has a natural $\ZZ^N$-grading
$$
V=\bigoplus_{\lambda\in X^*(S)}V_\lambda,
$$
where
$$
V_\lambda=\{v\in V\ |\ s\cdot v=\lambda(s)v\ \mbox{for any}\ s\in S(R)\}.
$$
Conversely, any $\ZZ^N$-graded $R$-module $V$ can be provided with an $S$-action by the same rule.

Let $G$ be a reductive group scheme over $R$ in the sense of~\cite{SGA3}. Assume that $S$ acts on $G$
by $R$-group automorphisms. The associated Lie algebra functor $\Lie(G)$ then acquires
a $\ZZ^N$-grading compatible with the Lie algebra structure,
$$
\Lie(G)=\bigoplus_{\lambda\in X^*(S)}\Lie(G)_\lambda.
$$

We will use the following version of~\cite[Exp. XXVI Prop. 6.1]{SGA3}.

\begin{lem}\label{lem:T-P}
Let $L=\Cent_G(S)$ be the subscheme of $G$ fixed by $S$. Let
$\Psi\subseteq X^*(S)$ be an $R$-subsheaf of sets closed under addition of characters.

(i) If $0\in\Psi$, then there exists
a unique smooth connected closed subgroup $U_\Psi$ of $G$ containing $L$ and satisfying
\begin{equation}\label{eq:LieUPsi}
\Lie(U_\Psi)=\bigoplus_{\lambda\in\Psi}\Lie(G)_\lambda.
\end{equation}
Moreover, if $\Psi=\{0\}$, then $U_\Psi=L$; if $\Psi=-\Psi$, then $U_\Psi$ is reductive; if $\Psi\cup(-\Psi)=X^*(S)$,
then $U_\Psi$ and $U_{-\Psi}$ are two opposite parabolic subgroups of $G$ with the common Levi subgroup
$U_{\Psi\cap(-\Psi)}$.

(ii) If $0\not\in\Psi$, then there exists a unique smooth connected unipotent closed subgroup $U_\Psi$ of $G$
normalized by $L$ and satisfying~\eqref{eq:LieUPsi}.
\end{lem}
\begin{proof}
The statement immediately follows by faithfully flat descent from the standard facts about the subgroups of
split reductive groups proved in~\cite[Exp. XXII]{SGA3}; see the proof of~\cite[Exp. XXVI Prop. 6.1]{SGA3}.
\end{proof}

\begin{defn}
The sheaf of sets
$$
\Phi=\Phi(S,G)=\{\lambda\in X^*(S)\setminus\{0\}\ |\ \Lie(G)_\lambda\neq 0\}
$$
is called the \emph{system of relative roots of $G$ with respect to $S$}.
\end{defn}

\begin{rem}\label{rem:S-P}
Choosing a total ordering on the $\QQ$-space $\QQ\otimes_{\ZZ} X^*(S)\cong\QQ^n$, one defines the subsets
of positive and negative relative roots $\Phi^+$ and $\Phi^-$, so that $\Phi$ is a disjoint
union of $\Phi^+$, $\Phi^-$, and $\{0\}$. By Lemma~\ref{lem:T-P} the closed subgroups
$$
U_{\Phi^+\cup\{0\}}=P,\qquad U_{\Phi^-\cup\{0\}}=P^-
$$
are two opposite parabolic subgroups of $G$ with the common Levi subgroup $\Cent_G(S)$.
Thus, if a reductive group $G$ over $R$ admits a non-trivial action of a split torus,
then it has a proper parabolic subgroup. The converse is true Zariski-locally, see~Lemma~\ref{lem:relroots} below.
\end{rem}

\subsection{Loop reductive groups and maximal tori}\label{ssec:loop}

Let $k$ be a field of characteristic $0$. We fix once and for
all an algebraic closure $\bar k$ of $k$ and a compatible set of primitive $m$-th roots of unity
$\xi_m\in \bar k$, $m\ge 1$.

P. Gille and A. Pianzola~\cite[Ch. 2, 2.3]{GiPi-mem} compute the \'etale
(or algebraic) fundamental group of the $k$-scheme
$$
X=\Spec k[x_1^{\pm 1},\ldots, x_n^{\pm 1}]
$$ at the
natural geometric point $e:\Spec\bar k\to X$ induced by the evaluation $x_1=x_2=\ldots=x_n=1$. Namely,
let $k_\lambda$, $\lambda\in\Lambda$ be the set of finite Galois extensions of $k$ contained in $\bar k$.
Let $I$ be the subset of $\Lambda\times\ZZ_{>0}$ consisting of all pairs $(\lambda,m)$ such that
$\xi_m\in k_\lambda$. The set I is directed by the relation $(\lambda,m)\le (\mu,k)$ if and only if
$k_\lambda\subseteq k_\mu$ and $m|k$. Consider
$$
X_{\lambda,m}=\Spec k_\lambda[x_1^{\pm\frac1m},\ldots,x_n^{\pm\frac1m}]
$$
as a scheme over $X$ via the natural inclusion of rings. Then $X_{\lambda,m}\to X$ is a Galois cover
with the Galois group
$$
\Gamma_{\lambda,m}=(\ZZ/m\ZZ)^n\semil\Gal(k_\lambda/k),
$$
where $\Gal(k_\lambda/k)$ acts on $k_\lambda[x_1^{\pm\frac1m},\ldots,x_n^{\pm\frac1m}]$ via its canonical action
on $k_\lambda$, and each $(\bar k_1,\ldots,\bar k_n)\in(\ZZ/m\ZZ)^n$ sends $x_i^{1/m}$ to $\xi_m^{k_i}x_i^{1/m}$,
$1\le i\le n$. The semi-direct product structure on $\Gamma_{\lambda,m}$ is induced by
 the natural action of $\Gal(k_\lambda/k)$ on $\mu_m(k_\lambda)\cong \ZZ/m\ZZ$.
We have
\begin{equation}\label{eq:pi1X}
\pi_1(X,e)=\varprojlim\limits_{(\lambda,m)\in I}\Gamma_{\lambda,m}=\hat\ZZ(1)^n\semil\Gal(k),
\end{equation}
where $\hat\ZZ(1)$ denotes the profinite group $\varprojlim\limits_{m}\mu_m(\bar k)$ equipped with the
natural action of the absolute Galois group $\Gal(k)=\Gal(\bar k/k)$.


For any reductive group scheme $G$ over $X$, we denote by $G_0$ the split, or Chevalley---Demazure
reductive group in the sense of~\cite{SGA3} of the same type as $G$. The group $G$ is a twisted form of $G_0$, corresponding
to a cocycle class $\xi$ in the \'etale cohomology set $H^1_{\et}(X,\Aut(G_0))$.

\begin{defn}\label{def:loop}\cite[Definition 3.4]{GiPi-mem}
The group scheme $G$ is called \emph{loop reductive}, if the cocycle $\xi$ is in the image of the natural map
$$
H^1\bigl(\pi_1(X,e),\Aut(G_0)(\bar k)\bigr)\to H^1_{\et}\bigl(X,\Aut(G_0)\bigr).
$$
Here $H^1\bigl(\pi_1(X,e),\Aut(G_0)(\bar k)\bigr)$ stands for the non-abelian cohomology set
in the sense of Serre~\cite{Se}.
The group $\pi_1(X,e)$ acts continuously on $\Aut(G_0)(\bar k)$ via the natural homomorphism
$\pi_1(X,e)\to \Gal(\bar k/k)$.
\end{defn}

We will use the following result.

\begin{thm*}\cite[Corollary 6.3]{GiPi-mem}
A reductive group scheme over $X$
is loop reductive if and only if $G$ has a maximal torus over $X$.
\end{thm*}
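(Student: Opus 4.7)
The plan is to prove the two directions of the equivalence in turn, using the structure of $\pi_1(X,e)$ computed above and the classification of forms of reductive groups via \'etale cohomology.

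For the forward direction, assume $G = {}^\xi G_0$ is associated to a loop cocycle $\xi$ in $H^1(\pi_1(X,e),\Aut(G_0)(\bar k))$. By continuity and the profinite structure of $\pi_1(X,e)$, the cocycle $\xi$ factors through some finite quotient $\Gamma_{\lambda,m}$, i.e.\ its image is a finite subgroup $H$ of $\Aut(G_0)(\bar k)$. In characteristic zero, every finite subgroup of $\Aut(G_0)(\bar k)$ normalizes some maximal torus of $G_{0,\bar k}$ (Steinberg's theorem on fixed points of semisimple automorphisms, combined with the fact that $H$ acts semisimply since $\mathrm{char}\,k=0$). After conjugating $\xi$ within its cohomology class, I may assume it takes values in $N_{G_0}(T_0)(\bar k)\rtimes\mathrm{Out}(G_0)$. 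The split torus $T_0$, equipped with this $\pi_1(X,e)$-action, then descends via \'etale Galois descent along $X_{\lambda,m}\to X$ to a maximal $X$-torus of $G$.

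For the reverse direction, assume $G$ admits a maximal $X$-torus $T$. Forms of a split torus over the connected base $X$ are classified by continuous representations of $\pi_1(X,e)$ on $X^*(T_{\bar k})\cong\ZZ^N$, and since $\GL_N(\ZZ)$ is discrete, any such representation factors through a finite quotient. I can arrange this quotient to be $\Gamma_{\lambda,m}$, so $T$ splits over $X_{\lambda,m}\to X$. Over $X_{\lambda,m}$, the pair $(G,T)$ is a reductive group with a split maximal torus of the same type as $(G_0,T_0)$; by the rigidity theorem for split pinned reductive group schemes \cite[Exp.~XXIII]{SGA3}, we have $G_{X_{\lambda,m}}\cong G_{0,X_{\lambda,m}}$. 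This produces a cocycle class in $H^1\bigl(\Gamma_{\lambda,m},\Aut(G_0)(X_{\lambda,m})\bigr)$.

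The main obstacle, and what I expect to be the technical heart of the argument, is to show that this cocycle is cohomologous to one taking values in the subgroup of \emph{constant} automorphisms $\Aut(G_0)(\bar k)\subseteq\Aut(G_0)(X_{\lambda,m})$. I would attack this via the exact sequence
$$
1\to G_0^{\ad}(X_{\lambda,m})\to \Aut(G_0)(X_{\lambda,m})\to \mathrm{Out}(G_0)\to 1.
$$
The outer projection is automatically constant since $\mathrm{Out}(G_0)$ is discrete. For the inner part, the preservation of $T_0$ (from the reverse-direction construction) lets me refine the cocycle to values in $N_{G_0}(T_0)(X_{\lambda,m})$, then pass modulo $T_0(X_{\lambda,m})$ to the constant Weyl group $W$. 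The remaining $T_0$-part is killed by combining Hilbert 90 for the tower $X_{\lambda,m}\to X$ with an explicit computation of units in Laurent polynomial rings, which shows the relevant obstruction group vanishes up to the desired level. Carrying out this last reduction carefully, while tracking the base point and the compatibility with the outer piece, is the step I would expect to absorb most of the technical effort.
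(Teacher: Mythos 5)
First, a point of order: the paper does not prove this statement at all --- it is quoted verbatim as an external result, \cite[Corollary 6.3]{GiPi-mem}, so there is no internal proof to compare yours against. I can therefore only assess your proposal on its own merits, and it has a genuine gap.

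The problem is in the forward direction. Your key claim --- ``in characteristic zero, every finite subgroup of $\Aut(G_0)(\bar k)$ normalizes some maximal torus of $G_{0,\bar k}$'' --- is false. Take $G_0=\PGL_2$, so $\Aut(G_0)(\bar k)=\PGL_2(\bar k)$, and let $H\cong A_4$ be the tetrahedral subgroup. The normalizer of any maximal torus of $\PGL_2$ is an extension of $\ZZ/2$ by $\Gm$, so its finite subgroups are cyclic or dihedral; $A_4$ is neither, hence normalizes no maximal torus. Steinberg's theorem, which you invoke, concerns a \emph{single} semisimple automorphism and does not extend to arbitrary finite groups. What saves the actual argument (and what Gille--Pianzola use) is that the relevant group is not the image of the whole cocycle on $\Gamma_{\lambda,m}$ --- which need not even be a subgroup, since a cocycle is not a homomorphism --- but the image of its restriction to the geometric part $(\ZZ/m\ZZ)^n$, on which $\pi_1(X,e)$ acts trivially through $\Gal(k)$, so that this restriction \emph{is} a homomorphism with finite \emph{abelian} image. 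For a finite abelian group of semisimple automorphisms in characteristic $0$ (equivalently, a finite diagonalizable group acting on $G_0$) the existence of a stable maximal torus does hold, and one must then handle the compatibility with the arithmetic part before descending. Your write-up skips the abelianness entirely, and without it the fixed-torus step collapses.

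The reverse direction is outlined along essentially the right lines (isotriviality of the $X$-torus $T$, splitting of $(G,T)$ over some $X_{\lambda,m}$ --- where you should also note that $\Pic$ of a Laurent polynomial ring vanishes, which is needed to conclude $G$ is split and not merely ``quasi-split along $T$'' --- and reduction of the resulting cocycle in $N_{\Aut(G_0)}(T_0)(X_{\lambda,m})$ to constants). But you explicitly defer the reduction to constant cocycles, which is precisely the nontrivial content of that direction: one has to control $H^1$ of the non-constant part $T_0(X_{\lambda,m})\cong k_\lambda^\times\times\ZZ^{nN}$ and show the obstruction dies after enlarging the cover. As it stands the proposal is an honest plan for one direction and an incorrect argument for the other, not a proof.
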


The definition of a maximal torus is as follows.

\begin{defn}\cite[Exp. XII D\'ef. 3.1]{SGA3}
Let $G$ be a group scheme of finite type over a scheme $S$, and let $T$ be a $S$-torus
which is an $S$-subgroup scheme of $G$.
Then $T$ is a \emph{maximal torus of $G$ over $S$}, if $T_{\overline{k(s)}}$ is a maximal torus of
$G_{\overline{k(s)}}$ for all $s\in S$.
\end{defn}

\subsection{Surjectivity theorem of Chernousov---Gille---Pianzola}\label{ssec:surj}

In this section we discuss Theorem~\ref{thm:ChGP} stated in the introduction and its implications.

\begin{proof}[Proof of Theorem~\ref{thm:ChGP}.]
In the original statement of~\cite[Theorem 14.3]{ChGP-conj}, one considers
a linear algebraic $k$-group $H$ whose connected component of identity $H^\circ$ is reductive, and
a cocycle $\eta\in H^1(\pi_1(R,e),H(\bar k))$. Let  $\mathfrak{H}$ be the $R$-group scheme which is
the $\eta$-twisted form of $H_R$. Then there is a minimal parabolic (not necessarily proper) $R$-subgroup
scheme $\mathfrak{P}$ of
$\mathfrak{H}^\circ$, a Levi subgroup $\mathfrak{L}$ of $\mathfrak{P}$ which is a loop reductive group scheme,
and a normal subgroup $J$ of $\mathfrak{L}(K)$ such that
\begin{equation}\label{eq:ChGM-gen}
\mathfrak{H}(K)=\left<\mathfrak{H}(R),J,\mathfrak{H}(K)^+\right>,
\end{equation}
and $J$ is isomorphic to a quotient of a group admitting a composition
series whose quotients are pro-solvable groups in $k$-vector spaces.

Clearly, such a group $J$
has no non-trivial quotients of finite exponent. We also claim that in the above setting,
\begin{equation}\label{eq:Hcirc}
\mathfrak{H}^\circ(K)=\mathfrak{H}^\circ(R)\cdot J\cdot\mathfrak{H}^\circ(K)^+,
\end{equation}
where $J\cdot\mathfrak{H}^\circ(K)^+$ is normal in $\mathfrak{H}^\circ(K)$.
Since
$\mathfrak{H}^\circ$ is a loop reductive group, by~\cite[Corollary 7.4]{GiPi-mem} the parabolic subgroup
$\mathfrak{P}_K$ of $\mathfrak{H}^\circ_K$ is also minimal.
Then, since $K$ has characteristic $0$, one has $\mathfrak{H}^\circ(K)^+=E_{\mathfrak{P}}(K)$ by Theorem~\ref{thm:EE} (i)
and~\cite[Proposition 6.2]{BoTi73}.
By~\cite[Proposition 6.11]{BoTi73} one has
$$
\mathfrak{H}^\circ(K)=\mathfrak{L}(K)E_{\mathfrak{P}}(K).
$$
This implies that $J\cdot \mathfrak{H}^\circ(K)^+$ is normal in $\mathfrak{H}^\circ(K)$.
It remains to note that $\mathfrak{H}(K)^+=\mathfrak{H}^\circ(K)^+$, since $\Ga_{,K}$ is connected.
 Now~\eqref{eq:ChGM-gen}
and the equality $\mathfrak{H}^\circ(R)=\mathfrak{H}(R)\cap \mathfrak{H}^\circ(K)$ imply~\eqref{eq:Hcirc}.

We proceed to show how the above facts imply the claim of our theorem.


\emph{Case 1: $G$ is a torus.}  The proof of the theorem of Chernousov, Gille, and Pianzola for the case where
$\mathfrak{H}=\mathfrak{H}^\circ=G$ is an $R$-torus does not use the
assumption that $\mathfrak{H}$ is given by a cocycle with values in
$H(\bar k)$~\cite[Proof of Theorem 14.3, Case 1, p. 314]{ChGP-conj}. Therefore,~\eqref{eq:Hcirc} implies
that our theorem holds for $G$.

\emph{Case 2: $G$ is adjoint.} Assume that $G$ is a loop semisimple group of adjoint type over $R$.
Then $G=\Aut(G)^\circ$,
where $\Aut(G)$ is the $R$-group scheme of automorphisms of $G$.
Since $G$ is loop reductive, the group $\Aut(G)=\mathfrak{H}$ satisfies the
conditions of~\cite[Theorem 14.3]{ChGP-conj}.
Then~\eqref{eq:Hcirc} shows that the claim of our theorem holds for $G$. Note that in this case
$J\cdot G(K)^+$ is normal in $G(K)$.

\emph{Case 3: $G$ is semisimple.} Now assume that $G$ is an arbitrary loop semisimple group scheme over $R$.
Then there is a short exact sequence of $R$-group schemes
\begin{equation}\label{eq:GGad}
1\to\Cent(G)\to G\xrightarrow{p} G^{\ad}\to 1,
\end{equation}
where $G^{\ad}$ is an adjoint semisimple group, and $\Cent(G)$ is a finite group scheme of multiplicative type.
Since $G^{\ad}$ has a maximal $R$-torus if and only if $G$ does, $G^{\ad}$ is a loop semisimple group.
By the previous case
$$
G^{\ad}(K)=G^{\ad}(R)\cdot J\cdot G^{\ad}(K)^+,
$$
where $J$ has no non-trivial quotients of finite exponent. By~\cite[Corollaire 6.3]{BoTi73} we have
$p(G(K)^+)=G^{\ad}(K)^+$. Since $H^1_{\et}(K,\Cent(G))$ is a group of finite exponent, considering
the "long"{} exact sequence of \'etale cohomology associated to~\eqref{eq:GGad}, we conclude that
$J\subseteq p(G(K))$. Set $I=p^{-1}(J)\subseteq G(K)$.
Then, clearly,
$$
G(K)=p^{-1}(G^{\ad}(R))\cdot I\cdot G(K)^+.
$$
Since
$H^i_{\et}(R,\Cent(G))=H^i_{\et}(K,\Cent(G))$ for all $i\ge 0$ by~\cite[Prop. 3.4 (2)]{GiPi08}, the "long"{}
exact sequence also implies that
$$
p^{-1}(G^{\ad}(R))=\Cent(G)(K)\cdot G(R)=\Cent(G)(R)\cdot G(R)=G(R).
$$

Assume that $I$ has a proper normal subgroup $I'$ such that $I/I'$ has finite exponent. Since $J$
has no non-trivial quotients of finite exponent, we have $I'/\Cent(G)(K)\cap I'=J$,
and hence $I=\Cent(G)(K)\cdot I'$.
Since $\Cent(G)(K)$ is finite, we can find a minimal subgroup $I'\le I$ such that
$I'$ is normal in $I$ and $I/I'$ has finite exponent. One readily sees that such $I'$
has no non-trivial quotients of finite exponent.
Since $\Cent(G)(K)=\Cent(G)(R)$, we have
$$
G(K)=p^{-1}(G^{\ad}(R))\cdot I\cdot G(K)^+=G(R)\cdot\Cent(G)(K)\cdot I'\cdot G(K)^+=G(R)\cdot I'\cdot G(K)^+,
$$
which proves the claim of the theorem for $G$.


\emph{Case 4: $G$ is reductive.} Let $G$ be an arbitrary loop reductive group scheme over $R$. Let $\der(G)$ be the derived subgroup scheme of $G$ and let $\rad(G)$
be the radical torus of $G$ in the sense of~\cite{SGA3}.
By~\cite[Exp. XXII, Prop. 6.2.4]{SGA3}
there is a short exact sequence of $R$-group schemes
$$
1\to C\to \rad(G)\times\der(G)\xrightarrow{f} G\to 1,
$$
where $C$ is a finite group scheme of multiplicative type which is central in $\rad(G)\times\der(G)$.
Arguing exactly as in~\cite[Proof of Theorem 14.3, Case 2, pp. 314--315]{ChGP-conj} (except
that the reference to Theorem 11.1 ibid. should be replaced by that to Theorem 14.1 ibid.),
one concludes that
$$
G(K)=G(R)\cdot f(\der(G)(K)\times\rad(G)(K)).
$$
Note that $\der(G)$ is a loop semisimple group scheme, since it has a maximal $R$-torus once $G$ does.
Then $\rad(G)$ and $\der(G)$ are subject to the previous cases of the theorem, and one readily
deduces the claim for $G$.
\end{proof}

\begin{cor}\cite[Remark 14.4]{ChGP-conj}\label{cor:dens-iso}
Let $k,R,K,G$ be as in Theorem~\ref{thm:ChGP}. Assume in addition that $G$ is semisimple.
Then for any minimal parabolic $R$-subgroup $P$ of $G$ the map
$$
K_1^{G,P}\bigl(k[x_1^{\pm 1},\ldots,x_n^{\pm 1}]\bigr)\to K_1^{G,P}\bigl(k((x_1))\ldots((x_n))\bigr)
$$
is surjective.
\end{cor}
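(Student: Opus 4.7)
The plan is to deduce the corollary directly from Theorem~\ref{thm:ChGP} by showing that both $J$ and $G(K)^+$ are contained in $E_P(K)$, which reduces the equality $G(K)=\l<G(R),J,G(K)^+\r>$ to $G(K)=G(R)\cdot E_P(K)$, the latter being exactly the surjectivity of the map $K_1^{G,P}(R)\to K_1^{G,P}(K)$.

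Since $K$ is a field, hence semilocal, Theorem~\ref{thm:EE}(i) applies: $E_P(K)$ is normal in $G(K)$ and coincides with $E_{P'}(K)$ for every minimal parabolic $K$-subgroup $P'$ of $G_K$. To show $G(K)^+\subseteq E_P(K)$, note that any $K$-subgroup of $G_K$ isomorphic to $\Ga_{,K}$ is connected and unipotent; as $G_K$ is semisimple, such a subgroup lies in some isotropic almost-simple factor of $G_K$, and by Borel--Tits it is contained in the unipotent radical of a minimal parabolic $K$-subgroup $P'$ of $G_K$. Its $K$-points thus lie in $U_{P'}(K)\subseteq E_{P'}(K)=E_P(K)$, and the normality of $E_P(K)$ in $G(K)$ propagates this inclusion to the whole normal subgroup $G(K)^+$.

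The main obstacle is the inclusion $J\subseteq E_P(K)$. Here one must unwind the construction of $J$ from the proof of~\cite[Theorem~14.3]{ChGP-conj}: the subgroup $J$ is assembled from the non-integral tails of the iterated Laurent expansions of $K$-points with respect to the valuations $v_1,\dots,v_n$ induced by $x_1,\dots,x_n$, and its pro-solvable structure in $k$-vector spaces reflects precisely the fact that $J$ is housed inside the unipotent radical $U_{P'}(K)$ of a parabolic $K$-subgroup $P'$ of $G_K$ attached to the resulting valuation flag. Consequently the elements of $J$ are unipotent, and the argument of the previous paragraph places $J$ inside $E_P(K)$.

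Combining the two inclusions with Theorem~\ref{thm:ChGP} and the normality of $E_P(K)$ yields $G(K)=G(R)\cdot E_P(K)$, whence the induced map $G(R)/E_P(R)\to G(K)/E_P(K)$ is surjective, as asserted.
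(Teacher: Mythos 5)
Your reduction of the statement to the two inclusions $G(K)^+\subseteq E_P(K)$ and $J\subseteq E_P(K)$ is the right framing, and your treatment of $G(K)^+$ is essentially sound (the paper gets $G(K)^+=E_P(K)$ in one stroke from~\cite[Proposition 6.2]{BoTi73}, using~\cite[Corollary 7.4]{GiPi-mem} to know that $P_K$ is still a minimal parabolic of $G_K$ --- an input you should also cite before invoking Theorem~\ref{thm:EE}(i) for the parabolic $P_K$). But the claimed inclusion $J\subseteq E_P(K)$ is a genuine gap, and the mechanism you propose for it --- that $J$ ``is housed inside the unipotent radical $U_{P'}(K)$ of a parabolic'' and hence consists of unipotent elements --- is not correct. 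In the construction of~\cite[Theorem 14.3]{ChGP-conj} the group $J$ is built from congruence-type subgroups attached to the iterated valuations, and these have a toral component: already for a maximal torus $T$ one gets pieces like $\ker\bigl(T(k[[t]])\to T(k)\bigr)$, which is exactly a pro-solvable group in $k$-vector spaces but sits in a torus, not in any unipotent radical. So there is no reason for $J$ to lie in $G(K)^+=E_P(K)$, and if that inclusion were available the hypothesis on the structure of $J$ in Theorem~\ref{thm:ChGP} would be superfluous --- a sign that the intended argument must use that hypothesis.

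The paper's proof uses it as follows: the only role of the composition series with quotients that are pro-solvable groups in $k$-vector spaces is that, since $\operatorname{char}k=0$, such groups are (uniquely) divisible, so the image of $J$ dies in any quotient group of finite exponent. One then shows that $K_1^{G,P}(K)$ has finite exponent: for $G$ simply connected this is~\cite[Remarque 7.6]{Gil}, and for general semisimple $G$ one pushes it through the central isogeny $1\to C\to G^{\scl}\to G\to 1$, using that $H^1_{\et}(K,C)$ is abelian torsion. Combined with $G(K)^+=E_P(K)$ and the generation statement of Theorem~\ref{thm:ChGP}, this kills the image of $J$ in $K_1^{G,P}(K)$ and gives surjectivity. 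To repair your argument you should replace the unipotence claim for $J$ by this divisibility-versus-finite-exponent argument.
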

\begin{proof}
Since $G$ is a loop reductive group, by~\cite[Corollary 7.4]{GiPi-mem} any minimal parabolic subgroup
$P$ of $G$ remains a minimal parabolic subgroup in $G_K$.
Then, since $K$ has characteristic $0$, one has $G(K)^+=E_P(K)$ by~\cite[Proposition 6.2]{BoTi73}.
It was observed in~\cite[Remark 14.4]{ChGP-conj} that if $G$ is simply connected, then
the surjectivity of the  map in question follows from Theorem~\ref{thm:ChGP}, since the group
$K_1^{G,P}(K)$ has finite exponent by~\cite[Remarque 7.6]{Gil}. We claim that $K_1^{G,P}(K)$ has finite
exponent whenever $G$ is semisimple. Indeed, there is a short exact sequence
\begin{equation}\label{eq:scl}
1\to C\to G^{\scl}\to G\to 1,
\end{equation}
where $C$ is a finite group scheme of multiplicative type, contained in the center of $(G)^{\scl}$. Let
$P^{\scl}\subseteq (G)^{\scl}$ be the parabolic subgroup which is the preimage of $P$.
The "long" exact sequence of \'etale cohomology corresponding to~\eqref{eq:scl} readily shows that $K_1^{G,P}(K)$ has finite
exponent once $K_1^{G^{\scl},P^{\scl}}(K)$ does,
since $H^1_{\et}(K,C)$ is an abelian torsion group.
\end{proof}

We also obtain the following immediate corollary on ${\mathcal R}$-equivalence class groups. Note that the group
$G$ is not required to have a maximal torus over $k[x_1^{\pm},\ldots,x_n^{\pm 1}]$.

\begin{cor}\label{cor:2fields-surj}
Let $k$ be a field of characteristic $0$, and
let $G$ be a reductive group over $k[x_1^{\pm 1},\ldots,x_n^{\pm 1}]$. Set $F=k(x_1,\ldots,x_n)$.

(i) The natural map of ${\mathcal R}$-equivalence class groups
$$
G\bigl(k(x_1,\ldots,x_n)\bigr)/{\mathcal R}\to G\bigl(k((x_1))\ldots((x_n))\bigr)/{\mathcal R}
$$
is surjective.

(ii) If $G$ is semisimple, then for every strictly proper parabolic subgroup $P$ of $G_F$ the natural
map
$$
K_1^{G_F,P}\bigl(k(x_1,\ldots,x_n)\bigr)\to K_1^{G_F,P}\bigl(k((x_1))\ldots((x_n))\bigr)
$$
is surjective.
\end{cor}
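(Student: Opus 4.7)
Set $F := k(x_1, \ldots, x_n)$ and $K := k((x_1)) \ldots ((x_n))$ for brevity. The strategy is to reduce to Corollary~\ref{cor:dens-iso} (which handles the case when $G$ has a maximal $R$-torus) by passing to a finite Galois \'etale cover of $R$ over which $G$ acquires a maximal torus, and then descending to $F$ via the Galois action.

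Since $G_F$ is reductive over the field $F$, it always has a maximal $F$-torus. Using the explicit structure of $\pi_1(\Spec R, e)$ computed in \S~\ref{ssec:loop} together with the characterization of loop reductive groups via maximal tori (\cite[Cor.~6.3]{GiPi-mem}), the plan is to choose $(\lambda, m) \in I$ such that $G_{R'}$ acquires a maximal $R'$-torus, where $R' := R_{\lambda, m} = k_\lambda[y_1^{\pm 1}, \ldots, y_n^{\pm 1}]$. Set $F' := k_\lambda(y_1, \ldots, y_n)$, $K' := k_\lambda((y_1)) \ldots ((y_n))$, and $\Gamma := \Gal(K'/K) = \Gal(F'/F)$. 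Applying Corollary~\ref{cor:dens-iso} to the loop reductive group $G_{R'}$ gives surjectivity of $K_1^{G, P'}(R') \to K_1^{G, P'}(K')$ for a minimal parabolic $P'$ of $G_{R'}$; since $R' \subseteq F'$, this yields surjectivity of $K_1^{G, P'}(F') \to K_1^{G, P'}(K')$, and analogously of $G(F')/\mathcal{R} \to G(K')/\mathcal{R}$ for part~(i).

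The next step is Galois descent along $\Gamma$. Given $g \in G(K) \subseteq G(K')$, the previous step produces $h \in G(F')$ with $[g] = [h]$ in $K_1^{G, P'}(K')$. Since $g$ is $\Gamma$-invariant, a corestriction/transfer argument along $\Gamma$ (exploiting that $K_1^{G, P}(K)$ has finite exponent by \cite[Remarque~7.6]{Gil}, as in the proof of Corollary~\ref{cor:dens-iso}, and that $G(K)/\mathcal{R}$ is abelian of finite exponent for reductive $G$ in characteristic zero) should produce a representative in $G(F)$ of the class of $g$ in $K_1^{G, P}(K)$, and similarly for $\mathcal{R}$-equivalence.

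The main obstacle is the first step: verifying that every reductive $R$-group becomes loop reductive after a sufficient Kummer cover $R_{\lambda, m}$. While a maximal $F$-torus on $G_F$ exists automatically, extending it to a maximal $R_{\lambda, m}$-torus requires controlling ramification along the boundary divisors $x_i = 0$; one expects this to follow from the structure of $\pi_1(\Spec R, e)$ recalled in \S~\ref{ssec:loop}, but it requires care, and may force a detour through a variant of Theorem~\ref{thm:ChGP} applied to a carefully chosen twist of $G$ rather than to $G$ itself. The Galois descent in the final step is comparatively routine given the finite-exponent and abelianness properties of the targets.
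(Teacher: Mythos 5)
Your strategy is genuinely different from the paper's, and it has two gaps that I do not see how to close. The first is the one you flag yourself: the whole argument hinges on finding a Kummer cover $R_{\lambda,m}$ over which $G$ acquires a maximal torus. This is not a deferred verification but an open claim. By \cite[Corollary 6.3]{GiPi-mem}, having a maximal torus over $X_{\lambda,m}$ means the restricted cocycle comes from $\pi_1(X_{\lambda,m},e)$, and by \cite[Remark 6.6]{GiPi-mem} there exist reductive groups over $R$ whose cocycle does not come from $\pi_1(X,e)$; nothing forces such a cocycle to become ``loop'' upon restriction to the finite-index subgroup $\pi_1(X_{\lambda,m},e)$, and the existence of a maximal $F$-torus on $G_F$ gives no control at the boundary divisors. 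The second gap is the descent step. The norm $\prod_{\sigma\in\Gamma}\sigma(h)$ of $h\in G(F')$ is not a well-defined element of $G(F)$ when $G(F')$ is non-abelian (reordering the factors changes the product), and $K_1^{G,P}$ is not known to be abelian in general. Even on an abelianized level, the transfer only tells you that $[g]^{|\Gamma|}$ and the image of a norm agree in $K_1^{G,P'}(K')$ --- not in $K_1^{G,P}(K)$, since the map $K_1^{G,P}(K)\to K_1^{G,P'}(K')$ need not be injective --- and even granting that, finite exponent $e$ of the target yields a cokernel of exponent dividing $\gcd(|\Gamma|,e)$, which you cannot force to be $1$ because the degree of the cover is dictated by $G$. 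Note that finite exponent is used quite differently in Corollary~\ref{cor:dens-iso}: there it kills the image of the divisible group $J$ of Theorem~\ref{thm:ChGP}, rather than extracting a root of a class.

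The paper's proof avoids covers and descent entirely. It runs by induction on $n$: for $n=1$, every reductive group over $k[x^{\pm 1}]$ automatically has a maximal torus by \cite[Corollary 5.2]{GiPi-mem}, so Theorem~\ref{thm:ChGP} and Corollary~\ref{cor:dens-iso} apply directly (for ${\mathcal R}$-equivalence one combines the inclusion of $G(K)^+$ in the ${\mathcal R}$-trivial class with the finite exponent of $G(K)/{\mathcal R}$ from Voskresenski\u{\i}). For $n>1$ one permutes the roles of the variables: the $n=1$ case over the base field $k((x_1))\ldots((x_{n-1}))$ handles the last completion, and the inductive hypothesis over the base field $k(x_n)$ handles the remaining ones. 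If you want to rescue your approach you would need both a proof of the isotriviality claim and an honest descent mechanism; the reordering trick is the much shorter route.
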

\begin{proof}
The proof goes by induction on $n$ starting with $n=1$. Set $A=k[x^{\pm 1}]$, $F=k(x)$ and $K=k((x))$.
By~\cite[Propositions 5.9 and 5.10]{ChGP-line} every
semisimple group scheme $G$ over $A$ is loop reductive, i.e. contains a maximal $A$-torus.

By the definition of ${\mathcal R}$-equivalence the subgroup
$G(K)^+$ is contained in the ${\mathcal R}$-equivalence class of the neutral element. By~\cite[\S 17.1, Corollary 2]{Vos} the group $G\bigl(K\bigr)/{\mathcal R}$
has finite exponent. Therefore, by Theorem~\ref{thm:ChGP} the natural map $G(A)\to G(K)/{\mathcal R}$ is surjective.
Since this map factors through the map $G(F)/{\mathcal R}\to G(K)/{\mathcal R}$, the latter map is surjective.


Now consider the non-stable $K_1$-functors.
By~\cite[Corollary 7.4]{GiPi-mem} minimal parabolic subgroups of $G$, $G_{F}$ and $G_K$ are of the same type.
Then, since $G_F$ contains a strictly proper parabolic $F$-subgroup $P$, we conclude that any minimal
parabolic $A$-subgroup $Q$ of $G$ is strictly proper. Moreover, $Q_F$ and $Q_K$ are minimal parabolic
subgroups of $G_F$ and $G_K$ respectively.
By Theorem~\ref{thm:EE} we have $K_1^{G,Q}(F)=K_1^{G_F,P}(F)$ and
$$
K_1^{G,Q}(K)=K_1^{G_K,P_K}(K)=K_1^{G_F,P}(K).
$$
By Corollary~\ref{cor:dens-iso} the natural map
$K_1^{G,Q}(A)\to K_1^{G,Q}(K)$
is surjective. Since this map factors through the map
$K_1^{G,Q}(F)\to K_1^{G,Q}(K)$, the latter map is also surjective. Therefore, the map
$$
K_1^{G_F,P}(F)\to K_1^{G_F,P}(K)
$$
is surjective.

Assume that $n>1$. Let $\mathcal{F}(-)$ denote any of the functors $K_1^{G_F,P}(-)$ and $G(-)/{\mathcal R}$ on the category
of field extensions of $k(x_1,\ldots,x_n)$. By the case $n=1$ the map
$$
\mathcal{F}\bigl(k((x_1))\ldots k((x_{n-1}))(x_n)\bigr)\to \mathcal{F}\bigl(k((x_1))\ldots((x_n))\bigr)
$$
is surjective. Since this map factors through the map
$$
\mathcal{F}\bigl(k(x_n)((x_1))\ldots ((x_{n-1}))\bigr)\to \mathcal{F}\bigl(k((x_1))\ldots((x_n))\bigr),
$$
the latter map is also surjective. By the induction hypothesis the map
$$
\mathcal{F}\bigl(k(x_1,\ldots, x_n)\bigr)\to \mathcal{F}\bigl(k(x_n)((x_1))\ldots ((x_{n-1}))\bigr)
$$
is surjective, which completes the proof.

\end{proof}

\section{$K_1^G$ of Laurent polynomials and power series over general rings}

\subsection{Results over general rings.}

In the present section we discuss various relations between  $K_1^G\bigl(R[[t]]\bigr)$, $K_1^G\bigl(R[t,t^{-1}]\bigr)$
and $K_1^G\bigl(R((t))\bigr)$, where $R$ is an arbitrary commutative ring and $G$ is a reductive algebraic  group
 defined over $R$. Our keystone result is the following theorem.

\begin{thm}\label{thm:E-decomp}
Let $R$ be a commutative ring, and let $G$ be a reductive group scheme over $R$, such that every semisimple normal subgroup
of $G$ contains $(\Gm_{,R})^2$. Then
$$
E\bigl(R((t))\bigr)=E\bigl(R[[t]]\bigr)E\bigl(R[t,t^{-1}]\bigr).
$$
\end{thm}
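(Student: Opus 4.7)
The plan is to prove the non-trivial inclusion $E(R((t))) \subseteq E(R[[t]]) \cdot E(R[t,t^{-1}])$. First, the $(\Gm_{,R})^2$-hypothesis together with Remark~\ref{rem:S-P} applied to each semisimple factor gives a strictly proper parabolic $R$-subgroup $P$ of $G$; by Theorem~\ref{thm:EE}(ii), $E(R((t))) = E_{P_{R((t))}}(R((t)))$ is then generated by $U_P(R((t)))$ and $U_{P^-}(R((t)))$, and analogously over $R[[t]]$ and $R[t,t^{-1}]$.

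The second step is to establish the pointwise decomposition
$$
U_P\bigl(R((t))\bigr) = U_P\bigl(R[[t]]\bigr) \cdot U_P\bigl(R[t,t^{-1}]\bigr),
$$
and its analog for $U_{P^-}$. By Lemma~\ref{lem:T-P}, $U_P$ admits a filtration $U_P = U^{(1)} \supseteq \cdots \supseteq U^{(k)} = 1$ by normal closed subgroups whose successive quotients are vector groups (the weight-$i$ pieces under a defining cocharacter). Since $R((t)) = R[[t]] \oplus t^{-1}R[t^{-1}]$ as $R$-modules, the claim holds on each vector-group layer; induction on the filtration length — exploiting abelianness of the quotients (so the two lifts commute modulo $U^{(i+1)}$) and surjectivity of $U_P(S) \to (U_P/U^{(i+1)})(S)$ for $S \in \{R[[t]], R[t,t^{-1}]\}$ (by vanishing of $H^1$ for vector groups on affine bases) — lifts the decomposition to $U_P$ itself.

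Set $S := E(R[[t]]) \cdot E(R[t,t^{-1}])$. By step two, $S$ contains every generator of $E(R((t)))$, so it suffices to show $S$ is stable under left multiplication by each $u \in U_?(R((t)))$, $? \in \{P, P^-\}$. Writing $u = u_+ u_-$ via the preceding step yields $u \cdot g_+ g_- = u_+ \cdot (u_- g_+) \cdot g_-$, reducing the task to proving the \emph{swap relation} $u_- g_+ \in S$ for arbitrary $u_- \in U_?(R[t,t^{-1}])$ and $g_+ \in E(R[[t]])$.

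The swap is the principal technical obstacle. I would argue by induction on the length of a word expressing $g_+$ in root subgroup generators $u_\beta(b)$, $b \in R[[t]]$, reducing the inductive step to analyzing a single product $u_\alpha(a) u_\beta(b)$ with $a \in R[t,t^{-1}]$ and $b \in R[[t]]$. The relative Chevalley commutator formula (cf.~\cite{PS}) gives
$$
u_\alpha(a) u_\beta(b) = [u_\alpha(a), u_\beta(b)] \cdot u_\beta(b) \cdot u_\alpha(a),
$$
where the commutator is a finite product of elements $u_{i\alpha + j\beta}(c_{i,j} a^i b^j)$ over pairs $(i,j) \in \ZZ_{>0}^2$ with $i\alpha + j\beta \in \Phi$. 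Each coefficient $c_{i,j} a^i b^j \in R((t))$ then splits via $R((t)) = R[[t]] \oplus t^{-1} R[t^{-1}]$, producing factors absorbable into $E(R[[t]])$ and $E(R[t,t^{-1}])$ respectively. The delicate point is the termination of the resulting iterated rearrangement: each expansion may spawn new mixed products whose coefficients must again be split. Termination is guaranteed by the finiteness of the relative root system $\Phi$ — the height of the root indices in nested commutators is strictly increasing, and once it exceeds the maximal height in $\Phi$ the commutators become trivial, closing the induction.
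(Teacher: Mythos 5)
Your setup (reduction to a strictly proper parabolic, the decomposition $U_P(R((t)))=U_P(R[[t]])\,U_P(R[t,t^{-1}])$ via splitting $R((t))=R[[t]]\oplus t^{-1}R[t^{-1}]$ on root subschemes, and the reduction of everything to the ``swap relation'') matches the paper's strategy up to and including the reduction to showing $E(R[t,t^{-1}])\cdot X_\beta(v)\subseteq E(R[[t]])E(R[t,t^{-1}])$ for $v$ with power-series coefficients. The gap is in your resolution of the swap itself, and it is a genuine one. First, the generalized Chevalley commutator formula (Lemma~\ref{lem:rootels}(iii)) is only valid when $m\alpha\neq -k\beta$ for all $m,k\ge 1$; since $g_+\in E(R[[t]])$ involves generators from \emph{both} $U_P$ and $U_{P^-}$, your inductive step inevitably hits pairs with $\beta\in-\ZZ_{>0}\alpha$, where no commutator formula is available and the product $X_\alpha(a)X_{-\alpha}(b)$ simply cannot be rearranged this way. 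Second, your termination argument is not sound: root heights increase inside a \emph{single} commutator expansion, but the splitting of each coefficient $c_{ij}a^ib^j$ into its $R[[t]]$- and $t^{-1}R[t^{-1}]$-parts creates new factors on the wrong side of the word, and these must be swapped again past the remaining power-series factors; the roots involved in these new swaps range over all of $\Phi_P$, so there is no monotone quantity and no reason the rewriting process stabilizes.

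The paper closes exactly this gap with a quantitative congruence-subgroup argument, and this is where the hypothesis $(\Gm_{,R})^2$ (i.e.\ relative rank $\ge 2$) does its real work --- in your proposal it is never used beyond producing a parabolic, which is a warning sign, since the method is known to fail for $\SL_2$. Concretely: one first writes $X_\beta(v)\in E(t^NA[[t]])\,E(A[t])$ for arbitrarily large $N$, and then proves that conjugation by a single generator $X_\alpha(t^nu)$ of $E(A[t,t^{-1}])$ carries $E(t^NA[[t]])$ into $E(t^MA[[t]])$ with $M=\lfloor N/3\rfloor-|\Phi_P|\cdot|n|$. The key input making the commutator formula applicable is~\cite[Lemma 11]{PS}: for $N\ge 3$ the group $E(t^NA[[t]])$ is generated by root elements $X_\gamma(\cdot)$ with $\gamma\in\Phi_P\setminus\ZZ\alpha$ --- a statement that requires every irreducible component of $\Phi_P$ to have rank $\ge 2$. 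Termination is then controlled by the $t$-adic order of vanishing of coefficients, not by root heights. Without this rewriting lemma and the valuation bookkeeping, your iteration has no way to avoid the opposite-root collisions or to certify that it ends.
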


The proof of this theorem uses the notions of relative roots and relative root subschemes of reductive groups
introduced by V. Petrov and the author in~\cite{PS}. Their definitions and a sketch of construction
are given in \S~\ref{ssec:rel} below, and after that we give a proof of Theorem~\ref{thm:E-decomp}.
As for now, we discuss several easy corollaries of this theorem.  We begin with a reformulation of Theorem~\ref{thm:E-decomp} in terms of non-stable $K_1$-functors.

\begin{cor}
Let $R,G$ be as in Theorem~\ref{thm:E-decomp}. Then the sequence of pointed sets
$$
1\longrightarrow K_1^G(R[t])\xrightarrow{\st g\mapsto (g,g)} K_1^G(R[[t]])\times K_1^G(R[t,t^{-1}])
\xrightarrow{\st (g_1,g_2)\mapsto g_1{g_2}^{-1}} K_1^G\bigl(R((t))\bigr)
$$
is exact.
\end{cor}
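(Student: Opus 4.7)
The plan is to check exactness at each of the two nontrivial spots, using Theorem~\ref{thm:E-decomp} together with Lemma~\ref{lem:f}. Throughout, the hypothesis on $G$ passes to each commutative $R$-algebra under consideration, so Theorem~\ref{thm:EE} guarantees that the elementary subgroup is normal over each of $R[t]$, $R[[t]]$, $R[t,t^{-1}]$, and $R((t))$; in particular all four non-stable $K_1$-groups are honest groups (not merely pointed sets), and the coset manipulations below are unambiguous.

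For injectivity at $K_1^G(R[t])$, I would simply note that $R[t,t^{-1}]=R[t]_{t}$ and that $t\in R[t]$ is a monic polynomial, so Lemma~\ref{lem:f} already asserts that the natural map $K_1^G(R[t])\to K_1^G(R[t,t^{-1}])$ is injective. The diagonal is the composition of this map with an inclusion into a product, hence also injective.

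For exactness at the middle term, the inclusion image $\subseteq$ kernel is immediate from $gg^{-1}=1$. For the converse, I would take $(g_1,g_2)\in G(R[[t]])\times G(R[t,t^{-1}])$ with $g_1g_2^{-1}\in E(R((t)))$. By Theorem~\ref{thm:E-decomp}, write $g_1g_2^{-1}=e_1e_2$ with $e_1\in E(R[[t]])$ and $e_2\in E(R[t,t^{-1}])$, and put $h:=e_1^{-1}g_1=e_2g_2$ inside $G(R((t)))$. Since $G$ is an affine $R$-scheme and $R[[t]]\cap R[t,t^{-1}]=R[t]$ inside $R((t))$, one has $G(R[[t]])\cap G(R[t,t^{-1}])=G(R[t])$, so $h\in G(R[t])$. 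Normality of $E$ then yields $hE(R[[t]])=g_1E(R[[t]])$ and $hE(R[t,t^{-1}])=g_2E(R[t,t^{-1}])$, so the class of $h$ in $K_1^G(R[t])$ maps to the class of $(g_1,g_2)$, as required. No real obstacle arises in the corollary itself: all the substantive work is concentrated in Theorem~\ref{thm:E-decomp}, and what remains is a formal patching argument based on the intersection identity for $R[t]$ and the normality of $E$.
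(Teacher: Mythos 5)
Your proof is correct and follows exactly the route the paper intends: Lemma~\ref{lem:f} (applied to the monic polynomial $t$, so that $R[t,t^{-1}]=R[t]_t$) gives exactness at $K_1^G(R[t])$, and the decomposition $E(R((t)))=E(R[[t]])E(R[t,t^{-1}])$ of Theorem~\ref{thm:E-decomp}, combined with $G(R[[t]])\cap G(R[t,t^{-1}])=G(R[t])$ and normality of the elementary subgroup, gives exactness in the middle. The paper's own proof is just the one-line citation of these two ingredients, so you have simply written out the details it leaves implicit.
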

\begin{proof}
Follows immediately from Theorem~\ref{thm:E-decomp} and Lemma~\ref{lem:f}.
\end{proof}

\begin{cor}
Let $R,G$ be as in Theorem~\ref{thm:E-decomp}. Then the natural homomorphism
$$
K_1^G(R[[t]])\to K_1^G(R((t)))
$$
is injective.
\end{cor}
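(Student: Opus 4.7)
The plan is to deduce injectivity formally from the two results just proved, namely the Mayer--Vietoris-type exact sequence of pointed sets in the preceding corollary and Lemma~\ref{lem:f} applied to the monic polynomial $f = t \in R[t]$. No new geometric input is needed; all of the real work is contained in Theorem~\ref{thm:E-decomp}.

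Concretely, suppose $g \in G(R[[t]])$ has trivial image in $K_1^G(R((t)))$. Then the pair $(g, 1) \in K_1^G(R[[t]]) \times K_1^G(R[t, t^{-1}])$ lies in the kernel of the difference map $(g_1, g_2) \mapsto g_1 g_2^{-1}$, because $g \cdot 1^{-1} = g$ is trivial in $K_1^G(R((t)))$. By exactness of the sequence in the preceding corollary, there exists a class $h \in K_1^G(R[t])$ whose diagonal image in the product equals $(g, 1)$. Unwinding this, we get $h = g$ in $K_1^G(R[[t]])$ and $h = 1$ in $K_1^G(R[t, t^{-1}])$.

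The second ingredient is Lemma~\ref{lem:f} with $A = R$ and $f = t$: our hypothesis that every semisimple normal subgroup of $G$ contains $(\Gm_{,R})^2$ is stronger than what the lemma requires, so the natural map
\[
K_1^G(R[t]) \longrightarrow K_1^G(R[t]_t) = K_1^G(R[t, t^{-1}])
\]
is injective. Consequently $h = 1$ already in $K_1^G(R[t])$, which forces $g = 1$ in $K_1^G(R[[t]])$, and injectivity is established.

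There is no real obstacle in the argument itself; the only point worth double-checking is that all the compatibilities between the natural inclusions $R[t] \hookrightarrow R[[t]]$, $R[t] \hookrightarrow R[t, t^{-1}]$, and $R[[t]], R[t, t^{-1}] \hookrightarrow R((t))$ make the diagonal and difference maps well defined on the level of $K_1^G$, which follows from the normality of the elementary subgroup (Theorem~\ref{thm:EE}) under our hypotheses on $G$.
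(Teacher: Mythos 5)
Your proof is correct and rests on exactly the same two ingredients as the paper's own argument (Theorem~\ref{thm:E-decomp} and Lemma~\ref{lem:f} with $f=t$); the only cosmetic difference is that you route the deduction through the Mayer--Vietoris sequence of the preceding corollary, whereas the paper re-runs the element-level argument directly, writing $g=e_1e_2$ with $e_1\in E(R[[t]])$, $e_2\in E(R[t,t^{-1}])$ and intersecting with $G(R[t])$. Both are valid and essentially identical in content.
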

\begin{proof}
Assume that the class of $g\in G(R[[t]])$ trivializes
in $K_1^G\bigl(R((t))\bigr)$, that is, $g\in G(R[[t]])\cap E\bigl(R((t))\bigr)$. Then by Theorem~\ref{thm:E-decomp}
we can assume that $g\in G(R[[t]])\cap E(R[t,t^{-1}])$. Since
\begin{equation}\label{eq:tt-1cap}
G(R[t,t^{-1}])\cap G(R[[t]])=G(R[t]),
\end{equation}
we have $g\in G(R[t])\cap E(R[t,t^{-1}])$. By Lemma~\ref{lem:f} this implies that
 $g\in E(R[t])$. Hence
$g\in E(R[[t]])$.
\end{proof}

The following corollary is what we use in the proof of Theorem~\ref{thm:main}.

\begin{cor}\label{cor:t-t-inj}
Let $R,G$ be as in Theorem~\ref{thm:E-decomp}. If $G(R[t])=G(R)E(R[t])$, then the natural homomorphism
$$
K_1^G(R[t,t^{-1}])\to K_1^G\bigl(R((t))\bigr)
$$
is injective.
\end{cor}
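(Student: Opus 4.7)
The plan is to take an element $g\in G(R[t,t^{-1}])$ whose image in $K_1^G\bigl(R((t))\bigr)$ is trivial and show that $g$ already lies in $E(R[t,t^{-1}])$. So assume $g\in G(R[t,t^{-1}])\cap E\bigl(R((t))\bigr)$.

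The first step is to invoke Theorem~\ref{thm:E-decomp} to decompose $g=h_1h_2$ with $h_1\in E(R[[t]])$ and $h_2\in E(R[t,t^{-1}])$. Then $h_1=g\,h_2^{-1}\in G(R[t,t^{-1}])$, and since $G$ is affine and $R[t,t^{-1}]\cap R[[t]]=R[t]$ inside $R((t))$, we conclude that $h_1\in G(R[t])$. The problem is thus reduced to showing that any element of $G(R[t])\cap E(R[[t]])$ lies in $E(R[t])$; note that this is the $t\mapsto t^{-1}$ analogue (over $R[[t]]$) of Lemma~\ref{lem:f}, but without a monicity condition and with $E(R[[t]])$ in place of $E(R[t]_f)$.

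The second step uses the hypothesis $G(R[t])=G(R)E(R[t])$ to handle this residual case. Write $h_1=g_0 e$ with $g_0\in G(R)$ and $e\in E(R[t])$. Evaluate at $t=0$: this is an $R$-algebra retraction $R[[t]]\to R$ (and likewise $R[t]\to R$) which is functorial for the parabolic $P$ and its unipotent radicals, hence sends $E(R[[t]])$ into $E(R)$ and $E(R[t])$ into $E(R)$. Applying this to the equality $h_1=g_0 e$ (and using that $g_0$ is constant in $t$) yields
$$
g_0 = h_1(0)\,e(0)^{-1}\in E(R).
$$
Therefore $h_1=g_0 e\in E(R)\cdot E(R[t])\subseteq E(R[t])\subseteq E(R[t,t^{-1}])$, and consequently $g=h_1h_2\in E(R[t,t^{-1}])$, as required.

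There is essentially no obstacle once Theorem~\ref{thm:E-decomp} is available: the only nontrivial inputs are the decomposition from that theorem, the intersection identity $G(R[t,t^{-1}])\cap G(R[[t]])=G(R[t])$, the hypothesis $G(R[t])=G(R)E(R[t])$, and functoriality of $E$ under the evaluation $t\mapsto 0$. The role of the hypothesis is precisely to allow us to peel off a constant factor $g_0\in G(R)$ after reducing to $G(R[t])\cap E(R[[t]])$, and to promote the membership $g_0\in E(R[[t]])$ to $g_0\in E(R)$ via the retraction.
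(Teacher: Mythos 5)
Your proposal is correct and follows essentially the same route as the paper's own proof: decompose via Theorem~\ref{thm:E-decomp} to reduce to an element of $G(R[t,t^{-1}])\cap E(R[[t]])$, use the intersection identity $G(R[t,t^{-1}])\cap G(R[[t]])=G(R[t])$, invoke the hypothesis $G(R[t])=G(R)E(R[t])$, and evaluate at $t=0$ to place the constant factor in $E(R)$. The only difference is cosmetic — you carry the factor $h_2$ explicitly where the paper says ``we can assume'' — so there is nothing to add.
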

\begin{proof}
Assume that the class of $g\in G(R[t,t^{-1}])$ trivializes
in $K_1^G\bigl(R((t))\bigr)$, that is, $g\in G(R[t,t^{-1}])\cap E\bigl(R((t))\bigr)$. Then by Theorem~\ref{thm:E-decomp}
we can assume that $g\in G(R[t,t^{-1}])\cap E\bigl(R[[t]]\bigr)$. By~\eqref{eq:tt-1cap}
we have $g\in G(R[t])\cap E(R[[t]])$. Since
$G(R[t])=G(R)E(R[t])$, we can write $g=g_0g_1$ with $g_0\in G(R)$, $g_1\in E(R[t])$.
Since $g\in E(R[[t]])$, setting $t=0$ we deduce $g_0\in E(R)$. Hence $g\in E(R[t])\subseteq E(R[t,t^{-1}])$.
\end{proof}

\begin{rem}
The main result of~\cite{St-poly} shows that the equality $G(R[t])=G(R)E(R[t])$ holds if $R$ is a regular ring containing
a perfect field $k$, and $G$ is defined over $k$. Using other results of~\cite{St-poly}, Lemma~\ref{lem:f},
 and the techniques of~\cite{PaStV}, we can prove the same equality whenever $R$ is a regular ring containing an infinite field $k$, and $G$ is defined
over $R$. The latter result is still unpublished, so we decided not to use it in the present paper.
Instead, we give an independent and much simpler proof in the case where $R$ is a ring of Laurent polynomials,
and $G$ satisfies the same assumptions as in Theorem~\ref{thm:main}; see Lemma~\ref{lem:Lau-frac}.
\end{rem}

\subsection{Relative roots and relative root subschemes}\label{ssec:rel}

In order to prove Theorem~\ref{thm:E-decomp}, we need to use the notions of relative roots and relative root
subschemes. These notions were initially introduced and studied in~\cite{PS}, and further developed
in~\cite{St-thes}.

Let $R$ be a commutative ring. Let $G$ be a reductive group scheme over $R$. Let $P$ be a parabolic subgroup
scheme of $G$ over $R$, and let $L$ be a Levi subgroup of $P$.
By~\cite[Exp. XXII, Prop. 2.8]{SGA3} the root system $\Phi$ of $G_{\overline{k(s)}}$, $s\in\Spec R$,
is constant locally in the Zariski topology on $\Spec R$. The type of the root system of
$L_{\overline{k(s)}}$ is determined by a Dynkin subdiagram
of the Dynkin diagram of $\Phi$, which is also constant Zariski-locally on $\Spec R$
by~\cite[Exp. XXVI, Lemme 1.14 and Prop. 1.15]{SGA3}. In particular, if $\Spec R$ is connected,
all these data are constant on $\Spec R$.

\begin{lem}\label{lem:relroots} 
Let $G$ be a reductive group over a connected commutative ring $R$, $P$ be a parabolic subgroup of $G$, $L$ be a Levi
subgroup of $P$, and $\bar L$ be the image of $L$ under the natural
homomorphism $G\to G^{\ad}\subseteq \Aut(G)$. Let $D$ be the Dynkin diagram of the root system $\Phi$ of
$G_{\overline{k(s)}}$ for any $s\in\Spec A$. We identify $D$ with a set of simple roots of $\Phi$. Let $J\subseteq D$
be the set of simple roots such that $D\setminus J\subseteq D$ is the subdiagram corresponing to $L_{\overline{k(s)}}$.
Then there are a unique maximal split subtorus
$S\subseteq\Cent(\bar L)$ and a subgroup $\Gamma\le \Aut(D)$ such that $J$ is invariant under $\Gamma$,
and
for any $s\in\Spec R$ and any split maximal torus $T\subseteq\bar L_{\overline{k(s)}}$
the kernel of the natural surjection
\begin{equation}\label{eq:T-S}
X^*(T)\cong\ZZ\Phi\xrightarrow{\ \pi\ } X^*(S_{\overline{k(s)}})\cong \ZZ\Phi(S,G)
\end{equation}
is generated by all roots $\alpha\in D\setminus J$,
and by all differences $\alpha-\sigma(\alpha)$, $\alpha\in J$, $\sigma\in\Gamma$.
\end{lem}
\begin{proof}


We can assume that $G=G^{\ad}$ from the start, and $L=\bar L$.
The radical $\rad(L)=\Cent(L)^\circ$ of $L$ is a torus. Since $\Spec R$ is connected, it contains a unique
maximal split subtorus $S\subseteq \Cent(L)$ by~\cite[Exp. XXVI, 6.5]{SGA3}. In order to show that
the kernel of the map~\eqref{eq:T-S} is as required, we use the notion of the Dynkin scheme of $G$.

By construction~\cite[Exp. XXIV, \S 3.7]{SGA3}, the Dynkin scheme $\Dyn(G)$ over $R$ is
an \'etale twisted form of the constant Dynkin scheme $D_R$ over $R$. It is thus
a finite \'etale scheme over $R$ endowed with a subscheme $E\subseteq\Dyn(G)\times_R\Dyn(G)$ not intersecting
the diagonal (the scheme of edges of the Dynkin diagram) and a morphism $\Dyn(G)\to\{1,2,3\}_R$ (the lengths of simple roots). Clearly,
there is a finite Galois ring extension $R'/R$ such that $\Dyn(G)_{R'}\cong D_{R'}$ is split.
Since $\Spec R$ is connected, the scheme $\Dyn(G)$ is uniquely
determined by $D$ together with a subgroup $\Gamma$ of $\Aut(D)$
that represents the action of the Galois
group $\Gal(R'/R)$ on $D$. The orbits of $\Gamma$ in $D$ are in one-to-one correspondence with
minimal clopen $R$-subschemes of $\Dyn(G)$. The parabolic subgroup $P$ of $G$ is defined over $R$, hence
by~\cite[Exp. XXVI, \S 3]{SGA3} $\Dyn(G)$ contains a clopen $R$-subscheme $t(P)$, called the type of $P$,
which is a twisted form of $J_R\subseteq D_R$. In particular, $J$ is a $\Gamma$-invariant subset of $D$.
The subscheme $\Dyn(G)\setminus t(P)$ is the twisted form of $(D\setminus J)_R$ isomorphic to the Dynkin scheme $\Dyn(L)$.


Recall that there exists a quasi-split
reductive group $G_{\qs}$ over $R$ of the same type as $G$ (in particular, adjoint), such that $G$
is an inner twisted form of $G_{\qs}$, that is, $G$ is given by a cocycle class in
$H^1_{\et}(R,G_{\qs})$~\cite[Exp. XXIV, 3.12]{SGA3}. One also has $\Dyn(G)\cong\Dyn(G_{qs})$~\cite[Th\'eor\`eme 3.11]{SGA3}.
Let $P_{\qs}$ be a parabolic subgroup in $G_{\qs}$ of the same type
as $P$ that is standard, i.e. contains a Killing couple $T_{\qs}\subseteq B_{\qs}$, and let $L_{\qs}$
be the standard Levi subgroup of $P_{\qs}$ containing $T_{\qs}$.

First, we study the torus $S$ in the case where $G=G_{\qs}$, $P=P_{\qs}$, and $L=L_{\qs}$.
There is an explicit presentation of $T_{\qs}$ as a product of Weil restrictions of $\Gm$~\cite[Exp. XXIV Prop. 3.13]{SGA3}:
$$
T_{\qs}\cong R_{\Dyn(G_{\qs})/R}\bigl({\Gm}_{,\Dyn(G_{\qs})}\bigr)\cong \prod_O R_{O/R}(\Gm_{,O}),
$$
where $O$ runs over all minimal clopen subschemes of $\Dyn(G)$. This presentation is obtained
by descent from the standard decomposition of a split maximal torus into a direct product
of 1-dimensional tori corresponding to the vertices of $D$.
By~\cite[Prop. 1 (2)]{PS-tind}
we have
$$
\Cent(L_{\qs})\cong\prod\limits_{O\not\subseteq t(P)} R_{O/R}({\Gm}_{,O}),
$$
where $O$
runs over all minimal clopen subschemes of $\Dyn(G)$ not contained in $t(P)$. Then, clearly,
\begin{equation}\label{eq:S-prod}
S=\prod_{O\not\subseteq t(P)}{\Gm}_{,R}\subseteq\Cent(L_{\qs}),
\end{equation}
where each ${\Gm}_{,R}$ is the canonical split subtorus of $R_{O/R}({\Gm}_{,O})$.
For any $\pi$ as in the statement of the lemma, by~\eqref{eq:S-prod} all roots $\alpha\in D\setminus J$,
and all differences $\alpha-\sigma(\alpha)$, $\alpha\in J$, $\sigma\in\Gamma$, belong to $\ker\pi$.
Since the rank of $X^*(S)$ is equal to the number of orbits of $\Gamma$ in $D\setminus J$, these elements
generate $\ker\pi$.

Now we consider $S$ in the general case where $G\neq G_{\qs}$. Let $\eta\in Z^1_{\et}(R,G_{\qs})$ be a cocycle corresponding
to the twisted form $G$ of $G_{\qs}$. Let $\bigsqcup\Spec R_\tau\to\Spec R$ be an \'etale covering
(which we can and do assume to be affine for simplicity) such that
$G_{R_\tau}\cong (G_{\qs})_{R_\tau}$ for each $\tau$, and let $g_{\sigma\tau}\in G_{\qs}(R_\tau\otimes_R R_\sigma)$
be the elements representing $\eta$ on this covering. For each $\tau$, the pair $(L_{R_\tau},P_{R_\tau})$,
considered inside $(G_{\qs})_{R_\tau}$, is conjugate to the pair $((L_{\qs})_{R_\tau},(P_{\qs})_{R_\tau})$
locally in the \'etale topology on $\Spec R_\tau$ by~\cite[Exp.XXVI, 4.5.2]{SGA3}. Refining our \'etale covering,
we can assume that these pairs are conjugate already over $R_\tau$, i.e.
$L_{R_\tau}=f_\tau(L_{\qs})_{R_\tau}f_\tau^{-1}$ and $P_{R_\tau}=f_\tau(P_{\qs})_{R_\tau}f_\tau^{-1}$
 for an element $f_\tau\in G_{\qs}(R_\tau)$.
Note that $g_{\sigma\tau}$ preserves the pair
$(L_{R_\tau\otimes_R R_\sigma},P_{R_\tau\otimes_R R_\sigma})$, since $L$ and $P$ are defined over $R$.
Since the normalizer of
$((L_{\qs})_{R_\tau\otimes_R R_\sigma},(P_{\qs})_{R_\tau\otimes_R R_\sigma})$ in
$(G_{\qs})_{R_\tau\otimes_R R_\sigma}$ by~\cite[Exp. XXVI Prop. 1.2, Prop. 1.6]{SGA3} equals
$(L_{\qs})_{R_\tau\otimes_R R_\sigma}$, we conclude that
\begin{equation}\label{eq:fgsigmatau}
f_{\sigma}^{-1}g_{\sigma\tau}f_\tau\in L_{\qs}(R_\tau\otimes_R R_\sigma).
\end{equation}
Let $S_{\qs}$ be the maximal split $R$-subtorus of $\Cent(L_{\qs})$.
Since $S_{\qs}$ is central in $L_{\qs}$, by~\eqref{eq:fgsigmatau} we have that
$$
g_{\sigma\tau}f_\tau|_{(S_{\qs})_{R_\tau\otimes_R R_\sigma}}=f_\sigma|_{(S_{\qs})_{R_\tau\otimes_R R_\sigma}}
$$
as $R_\tau\otimes_R R_\sigma$-group scheme morphisms from $(S_{\qs})_{R_\tau\otimes_R R_\sigma}$ to
$\Cent(L)_{R_\tau\otimes_R R_\sigma}$ induced by conjugation.
By faithfully
flat descent for affine morphisms~\cite[Part 1, Theorem 4.33]{FGA-ex}, there is a closed embedding
of $R$-group schemes $i:S_{\qs}\to \Cent(L)$ such that $i_{R_\tau}=f_\tau|_{(S_{\qs})_{R_\tau}}$ for each $\tau$. Clearly,
$i(S_{\qs})$ is contained in the maximal split subtorus $S$ of $\Cent(L)$.

Note that we can interchange the roles of the groups $(G_{\qs},P_{\qs},L_{\qs})$ and $(G,P,L)$ in the argument of
the previous paragraph. Indeed, since $G$ is given by a cocycle with values in $G_{\qs}$, coversely,
$G_{\qs}$ is given by a cocycle with values in $G$; cf.~\cite[Ch. I, Proposition 35]{Se}.
Then we conclude that there is a closed embedding of $R$-group schemes $j:S\to \Cent(L_{\qs})$ as well.
Therefore, $i(S_{\qs})=S$, since these tori have the same rank.
It remains to note that, since
$\Dyn(G_{\qs})=\Dyn(G)$ and for any
$s\in\Spec R$ the homomorphism $R\to\overline{k(s)}$ factors through one of the rings
$R_\tau$, the torus $S\subseteq\Cent(L)$ satisfies the claim of the lemma on $\ker\pi$, since $S_{\qs}$ does.
\end{proof}

In~\cite{PS}, we introduced a system of relative roots $\Phi_P$ with respect to a parabolic
subgroup $P$ of a reductive group $G$ over a commutative ring $R$. This system $\Phi_P$ was defined
independently over each member $\Spec A=\Spec A_i$
of a suitable finite disjoint Zariski covering
$$
\Spec R=\coprod\limits_{i=1}^m\Spec A_i,
$$
such that over each $A=A_i$, $1\le i\le m$, the root system $\Phi$ and the Dynkin diagram $D$ of $G$ is constant.
Namely, we considered the formal projection
$$
\pi_{J,\Gamma}\colon\ZZ \Phi
\longrightarrow \ZZ\Phi/\l<D\setminus J;\ \alpha-\sigma(\alpha),\ \alpha\in J,\ \sigma\in\Gamma\r>,
$$
and set $\Phi_P=\Phi_{J,\Gamma}=\pi_{J,\Gamma}(\Phi)\setminus\{0\}$. The last claim of Lemma~\ref{lem:relroots}
allows to identify $\Phi_{J,\Gamma}$ and $\Phi(S,G)$ whenever $\Spec R$ is connected.

\begin{defn}
In the setting of Lemma~\ref{lem:relroots} we call $\Phi(S,G)$ a \emph{system
of relative roots with respect to the parabolic subgroup $P$ over $R$} and denote it by $\Phi_P$.
\end{defn}

\begin{exa}\label{exa:roots}
If $A$ is a field or a local ring, and $P$ is a minimal parabolic subgroup of $G$,
then $\Phi_P$ is nothing but the relative root system of $G$ with respect to a maximal split subtorus
in the sense of~\cite{BoTi} or, respectively,~\cite[Exp. XXVI \S 7]{SGA3}.
\end{exa}

We have also defined in~\cite{PS} irreducible components of systems of relative roots, the subsets of positive and negative
relative roots, simple relative roots, and the height of a root. These definitions are immediate analogs
of the ones for usual abstract root systems, so we do not reproduce them here.



Let $R$ be a commutative ring with 1.
For any finitely generated projective $R$-module $V$, we denote by $W(V)$ the natural affine scheme
over $R$ associated with $V$, see~\cite[Exp. I, \S 4.6]{SGA3}.
Any morphism of $R$-schemes $\WW(V_1)\to\WW(V_2)$
is determined by an element $f\in\Sym^*(V_1^\vee)\otimes_R V_2$, where $\Sym^*$ denotes the symmetric algebra,
and $V_1^\vee$ denotes the dual module of $V_1$. If $f\in\Sym^d(V_1^\vee)\otimes_R V_2$,
we say that the corresponding morphism is
homogeneous of degree $d$.
By abuse of notation, we also write $f:V_1\to V_2$ and call it {\it a degree $d$
homogeneous polynomial map from $V_1$ to $V_2$}. In this context, one has
$$
f(\lambda v)=\lambda^d f(v)
$$
for any $v\in V_1$ and $\lambda\in R$.

\begin{lem}\label{lem:relschemes}\cite{PS}
In the setting of Lemma~\ref{lem:relroots}, for any $\alpha\in\Phi_P=\Phi(S,G)$ there exists a closed
$S$-equivariant  embedding of $R$-schemes
$$
X_\alpha\colon W\bigl(\Lie(G)_\alpha\bigr)\to G,
$$
satisfying the following condition.

\begin{itemize}
\item[\bf{($*$)}] Let $R'/R$ be any ring extension such that $G_{R'}$ is split with
respect to a maximal split $R'$-torus $T\subseteq L_{R'}$. Let $e_\delta$,
$\delta\in\Phi$, be a Chevalley basis of $\Lie(G_{R'})$, adapted to $T$ and $P$, and $x_\delta\colon\Ga\to G_{R'}$, $\delta\in\Phi$, be the associated
system of 1-parameter root subgroups
{\rm(}e.g. $x_\delta=\exp_\delta$ of~\cite[Exp. XXII, Th. 1.1]{SGA3}{\rm)}.
Let
$$
\pi:\Phi=\Phi(T,G_{R'})\to\Phi_P\cup\{0\}
$$
be the natural projection.
Then for any
$u=\hspace{-8pt}\sum\limits_{\delta\in\pi^{-1}(\alpha)}\hspace{-8pt}a_\delta e_\delta\in\Lie(G_{R'})_\alpha$
one has
\begin{equation}\label{eq:Xalpha-prod}
X_\alpha(u)=
\Bigl(\prod\limits_{\delta\in\pi^{-1}(\alpha)}\hspace{-8pt}x_{\delta}(a_\delta)\Bigr)\cdot
\prod\limits_{i\ge 2}\Bigl(\prod\limits_{
\st
\theta\in \pi^{-1}(i\alpha)
}
\hspace{-8pt}x_\theta(p^i_{\theta}(u))\Bigr),
\end{equation}
where every $p^i_{\theta}:\Lie(G_{R'})_\alpha\to R'$ is a homogeneous polynomial map of degree $i$,
and the products over $\delta$ and $\theta$ are taken in any fixed order.
\end{itemize}
\end{lem}
\begin{proof}
Proceeding exactly as in~\cite[Th. 2]{PS}, we prove the existence of a closed embedding
$$
X_\alpha\colon W(V_\alpha)\to G
$$
satisfying condition $(*)$, where $V_\alpha$ is a finitely generated projective $R$-module of rank $|\pi^{-1}(\alpha)|$,
implicitly constructed by descent. However, once we identify the system of relative roots $\Phi_P$
in the sense of~\cite{PS} with $\Phi(S,G)$ as discussed above, it follows from the proof of~\cite[Th. 2]{PS} that
$V_\alpha$ is canonically isomorphic to $\Lie(G)_\alpha$. The $S$-equivariance of $X_\alpha$ follows
immediately from condition $(*)$.
\end{proof}

\begin{defn}
Closed embeddings $X_\alpha$, $\alpha\in\Phi_P$, satisfying the statement of Lemma~\ref{lem:relschemes},
are called \emph{relative root subschemes of $G$ with respect to the parabolic subgroup $P$}.
\end{defn}

\begin{rem}
Relative root subschemes of $G$ with respect to $P$, actually,
depend on the choice of a Levi subgroup $L$ in $P$, but their essential properties stay the same,
so we usually omit $L$ from the notation.
\end{rem}

\begin{exa}\label{exa:exp}
Let $A$ be a connected commutative ring that contains $\QQ$, and let $G$
be a semisimple reductive group of adjoint type over $A$ containig  a parabolic subgroup $P$ with a Levi subgroup $L$.
We identify $G$ with its image under
the natural homomorphism $G\to\Aut_A(\Lie(G))$.
Then the relative root $A$-subschemes $X_\alpha$, $\alpha\in\Phi_P$, of
Lemma~\ref{lem:relschemes} can be constructed as follows.
For any ring extension $B/A$ and any $v\in V_\alpha\otimes_A B=\Lie(G_B)_\alpha$ set
$$
X_\alpha(v)=\exp(\ad_v)=\sum\limits_{i=0}^\infty\frac 1{i!}(\ad_v)^i\in\Aut_B(\Lie(G_B)).
$$
Here the "infinite"{} sum is necessarily finite, since we have $(\ad_v)^i=0$ for any $i>|\Phi_P|$.
It is clear that $X_\alpha:W(\Lie(G)_\alpha)\to\Aut_A(\Lie(G))$ is a morphism of $A$-schemes.

Apart from that, we need to show that each $X_\alpha$ is an $S$-equivariant closed embedding and
satisfies the condition $(*)$ of Lemma~\ref{lem:relschemes}.
Let $A'/A$ be any ring extension such that $G_{A'}$ is split with
respect to a maximal split $A'$-torus $T\subseteq L_{A'}$.
We recall that $x_\delta(t)$, $\delta\in\Phi$, $t\in A'$, coincide
 with $\exp(t\ad_{e_\delta})$ in the adjoint representation of $G_{A'}$, e.g.~\cite{Chev}.
Then the Baker--Campbell--Hausdorff formula
implies that~\eqref{eq:Xalpha-prod} holds, and that each morphism
$$
X_\alpha:W(\Lie(G_{A'})_\alpha)\to G_{A'}
$$
is $S_{A'}$-equivariant. Denote by $X_\alpha^H$ the morphism $X_\alpha$ considered
as a morphism from $W(\Lie(G_{A'})_\alpha)$ to the unipotent closed $A'$-subgroup
$$
H=\prod\limits_{i\ge 1}\prod\limits_{\delta\in\pi^{-1}(i\alpha)}x_\delta(\Ga)\cong
W\Bigl(\sum_{i\ge 1}\Lie(G_{A'})_{i\alpha}\Bigr)
$$
of $G_{A'}$.
Then~\eqref{eq:Xalpha-prod} readily implies that $X_\alpha^H$ is universally closed and universally injective,
and hence the same is true for $X_\alpha$.
Since the tangent map $\Lie(X_\alpha)$, corresponding to the inclusion of $\Lie(G_{A'})_\alpha$
into $\Lie(G_{A'})$, is also injective,
we conclude that
$X_\alpha$ is formally unramified. Summing up, this implies that
$X_\alpha$ is a closed embedding of $W(\Lie(G_{A'})_\alpha)$ into $G_{A'}$.

Finally, note that locally in the \'etale topology, the group $G$ over $A$ is split with respect to a torus $T$ contained in
$L\subseteq P$, see~\cite[Exp. XXII, Cor. 2.3; Exp. XXVI, Lemme 1.14]{SGA3}.
Then faithfully flat descent implies that in order to prove that $X_\alpha$ is an
$S$-equivariant closed embedding over $A$, it is enough to prove the same over every $A'$ as above.
Since the latter is already established, we conclude that $X_\alpha$ satisfies all the conditions present in
Lemma~\ref{lem:relschemes}.



\end{exa}

We will use the following properties of relative root subschemes.

\begin{lem}\label{lem:rootels}\cite[Theorem 2, Lemma 6, Lemma 9]{PS}
Let $X_\alpha$, $\alpha\in\Phi_P$, be as in Lemma~\ref{lem:relschemes}.
Set $V_\alpha=\Lie(G)_\alpha$ for short. Then

(i) There exist degree $i$ homogeneous polynomial maps $q^i_\alpha:V_\alpha\oplus V_\alpha\to V_{i\alpha}$, i>1,
such that for any $R$-algebra $R'$ and for any
$v,w\in V_\alpha\otimes_R R'$ one has
\begin{equation}\label{eq:sum}
X_\alpha(v)X_\alpha(w)=X_\alpha(v+w)\prod_{i>1}X_{i\alpha}\l(q^i_\alpha(v,w)\r).
\end{equation}

(ii) For any $g\in L(R)$, there exist degree $i$ homogeneous polynomial maps
$\varphi^i_{g,\alpha}\colon V_\alpha\to V_{i\alpha}$, $i\ge 1$, such that for any $R$-algebra $R'$ and for any
$v\in V_\alpha\otimes_R R'$ one has
$$
gX_\alpha(v)g^{-1}=\prod_{i\ge 1}X_{i\alpha}\l(\varphi^i_{g,\alpha}(v)\r).
$$
If $g\in S(R)$, then $\varphi^1_{g,\alpha}$ is multiplication by a scalar, and
all $\varphi^i_{g,\alpha}$, $i>1$, are trivial.

(iii) \emph{(generalized Chevalley commutator formula)} For any $\alpha,\beta\in\Phi_P$
such that $m\alpha\neq -k\beta$ for all $m,k\ge 1$,
there exist polynomial maps
$$
N_{\alpha\beta ij}\colon V_\alpha\times V_\beta\to V_{i\alpha+j\beta},\ i,j>0,
$$
homogeneous of degree $i$ in the first variable and of degree $j$ in the second
variable, such that for any $R$-algebra $R'$ and for any
for any $u\in V_\alpha\otimes_R R'$, $v\in V_\beta\otimes_R R'$ one has
\begin{equation}\label{eq:Chev}
[X_\alpha(u),X_\beta(v)]=\prod_{i,j>0}X_{i\alpha+j\beta}\bigl(N_{\alpha\beta ij}(u,v)\bigr)
\end{equation}

(iv) For any subset $\Psi\subseteq X^*(S)\setminus\{0\}$ that is closed under addition,
the morphism
$$
X_\Psi\colon W\Bigl(\,\bigoplus_{\alpha\in\Psi}V_\alpha\Bigr)\to U_\Psi,\qquad
(v_\alpha)_\alpha\mapsto\prod_\alpha X_\alpha(v_\alpha),
$$
where the product is taken in any fixed order,
is an isomorphism of schemes.
\end{lem}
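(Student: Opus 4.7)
The plan is to deduce all four properties from the explicit formula $(*)$ of Lemma~\ref{lem:relschemes} by passing to a faithfully flat cover $A'/A$ over which $G$ becomes split with respect to some maximal torus $T\subseteq L_{A'}$. Over $A'$, each $X_\alpha(v)$ is realized as a specific ordered product of Chevalley root elements $x_\delta(a_\delta)$ for $\delta\in\pi^{-1}(\{i\alpha:i\ge 1\})$, so that all the structural identities in the statement reduce to manipulations with the classical Chevalley commutator formula together with the polynomial behavior of $T$-weight spaces under conjugation by elements of $L$.

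First I would establish (iv). Over $A'$ the subgroup $U_\Psi$ admits the canonical parametrization as an ordered product of the one-parameter subgroups $x_\delta$, $\delta\in\pi^{-1}(\Psi)$, which is a scheme isomorphism by Lemma~\ref{lem:T-P} and~\cite[Exp.~XXII]{SGA3}. Property $(*)$ allows us to regroup this parametrization, for any fixed ordering of $\Psi$, into the ordered product $\prod_\alpha X_\alpha(v_\alpha)$ up to a triangular polynomial change of coordinates within each relative weight space. Hence $X_\Psi$ is an isomorphism over $A'$, and therefore over $A$ by faithfully flat descent.

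The identities (i), (ii), (iii) then all follow by applying (iv) to suitably chosen $\Psi$. For (i), I take $\Psi=\{i\alpha:i\ge 1\}$: the product $X_\alpha(v)X_\alpha(w)$ lies in $U_\Psi$, and (iv) expresses it uniquely as $\prod_{i\ge 1}X_{i\alpha}(r_i)$; comparing $S$-weight $\alpha$ components via $(*)$ forces $r_1=v+w$, while the higher components give the maps $q^i_\alpha(v,w)$, polynomial of degree $i$ since they arise from iterated Chevalley commutators of the $x_\delta(a_\delta)$. For (ii), conjugation by $g\in L(A)$ preserves the $S$-grading, so $gX_\alpha(v)g^{-1}\in U_{\{i\alpha:i\ge 1\}}$, and the unique factorization from (iv) defines the $\varphi^i_{g,\alpha}$; when $g\in S(A)$, $g$ acts on $V_{i\alpha}$ by the $i$-th power of the character through which it acts on $V_\alpha$, and $(*)$ then forces all $\varphi^i_{g,\alpha}$ with $i>1$ to vanish. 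For (iii) I apply the same argument to $\Psi=\{i\alpha+j\beta:i,j\ge 0,\ (i,j)\ne(0,0)\}$, where the hypothesis $m\alpha\ne -k\beta$ guarantees that $\Psi$ is closed under addition and does not contain $0$.

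The main obstacle is ensuring that the polynomial maps $q^i_\alpha$, $\varphi^i_{g,\alpha}$, and $N_{\alpha\beta ij}$ really descend from $A'$ to $A$. This is handled by the uniqueness of the factorization in (iv): since the two sides of each identity are morphisms of $A$-schemes which agree after base change to $A'$, faithful flatness forces them to agree over $A$, and the coefficient maps pull back to polynomial maps of the $A$-modules $V_\alpha$ through the canonical identification $V_\alpha\otimes_A A'\cong\Lie(G_{A'})_\alpha$ provided by Lemma~\ref{lem:relschemes}. The homogeneity statements are then automatic from the $S$-equivariance of each $X_{i\alpha}$, since the corresponding summands in $(*)$ scale by $\lambda(s)^i$ under $s\in S(A)$.
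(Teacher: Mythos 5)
The paper does not actually prove this lemma: it is quoted verbatim from \cite[Theorem 2, Lemma 6, Lemma 9]{PS}, so there is no internal proof to compare against. Your reconstruction --- establish the unique-factorization statement (iv) over a splitting cover $A'$ from the Chevalley parametrization of $U_\Psi$, descend by faithful flatness, and then read off (i)--(iii) by comparing graded components of elements of $U_\Psi$ using the formula $(*)$ and the classical Chevalley commutator formula --- is precisely the strategy of the cited source, and it is sound. Two small points worth tightening: in (iii) your $\Psi$ contains the pure multiples $i\alpha$ and $j\beta$, so you still owe an argument (e.g.\ the split Chevalley commutator formula over $A'$, or projection onto $U_{\ZZ_{>0}\alpha}$ and $U_{\ZZ_{>0}\beta}$) that those components of the commutator vanish; and in (ii) the cleanest reason the higher $\varphi^i_{g,\alpha}$ vanish for $g\in S(A)$ is the $S$-equivariance of $X_\alpha$ itself, which gives $gX_\alpha(v)g^{-1}=X_\alpha(\alpha(g)v)$ as a single term.
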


\subsection{Proof of Theorem~\ref{thm:E-decomp}}

\begin{proof}[Proof of Theorem~\ref{thm:E-decomp}.]

By assumption, every semisimple normal subgroup of $G$ contains $(\Gm_{,R})^2$. We
claim that there is a split subtorus $S_0$ of $G$ such that $S_0\cap H$
contains $(\Gm_{,R})^2$ for every semisimple normal subgroup $H$ of $G$. Indeed, if $G$ is semisimple
and simply connected, this follows from the fact that $G$ is a direct product of its minimal normal
semisimple subgroups~\cite[Exp. XXIV, \S 5]{SGA3}. In general,
$G$ is a quotient of the direct product $G^{\scl}\times\rad(G)$ by a central finite
subgroup, where $\rad(G)$ is the radical
of $G$, and $G^{\scl}$ is the simply connected cover of the derived group scheme of $G$.
One readily sees that if $S_0^{\scl}$ is the split subtorus of $G^{\scl}$ whose intersection
with every semisimple normal subgroup of $G^{\scl}$ contains $(\Gm_{,R})^2$, then the same is true
for its image $S_0$ in $G$.


 Let $P$ be a parabolic subgroup
of $G$ with a Levi subgroup $L=\Cent_G(S_0)$ constructed as in Remark~\ref{rem:S-P}. Clearly, $P$ is
strictly proper. Let $P^-$ be the opposite to $P$ parabolic subgroup satisfying $L=P\cap P^-$. For any $R$-algebra $A$, we have
$$
E(A)=\l<U_P(A),U_{P^-}(A)\r>.
$$

Now we show that, in order to prove the equality
\begin{equation}\label{eq:EEE}
E\bigl(R((t))\bigr)=E\bigl(R[[t]]\bigr)E\bigr(R[t^{\pm 1}]\bigr),
\end{equation}
we can assume that $R$ is connected. Fix an element $g\in E\bigl(R((t))\bigr)$.
The commmutative ring $R$ is a direct limit
of its Noetherian subrings, $R=\varinjlim R_\alpha$. Fix an element $g\in E\bigl(R((t))\bigr)$.
Since $G$, its semisimple normal subgroup schemes, $P$, $P_-$ and $L$
are all finitely presented $R$-group schemes, there is an index $\alpha$ and
a reductive group scheme $G'$ over $R_\alpha$ such that all these group schemes are defined over $R_\alpha$,
$P$ is strictly proper over $R_\alpha$,
and $g\in E\bigl(R_\alpha((t))\bigr)\le E\bigl(R((t))\bigr)$; cf.~\cite[Exp. XIX, Remarque 2.9]{SGA3}.
Clearly, in order to show that $g$ belongs to the right-hand side of~\eqref{eq:EEE}, it is enough to prove
the equality~\eqref{eq:EEE} for the Noetherian ring $R_\alpha$ in place of $R$.
Thus, we can assume from the start that $R$ is Noetherian. Then $R=\prod\limits_{i=1}^m A_i$, where
$A_i$, $1\le i\le m$, are connected rings.
Set $R_1=R((t))$, $R_2=R[t^{\pm 1}]$, and $R_3=R[[t]]$. Then
$$
E(R_j)=\l<U_P(R_j),U_{P^-}(R_j)\r>=\prod_{i=1}^m E(A_i\otimes_R R_j)
$$ for all $R_j$, $j=1,2,3$.
Therefore, it is enough to show that~\eqref{eq:EEE} holds with $R$ replaced by each of the connected rings $A_i$.
Thus, we can assume from now on that $R$ is connected.

Let $S\subseteq \Cent(\bar L)$ be the split torus constructed in Lemma~\ref{lem:relroots}, $\Phi_P=\Phi(S,G)$,
and $X_\alpha$, $\alpha\in \Phi_P$, be the relative root subschemes over $R$ that exist by Lemma~\ref{lem:relschemes}.
Since, clearly, $S$ contains the image of $S_0$ in $G^{\ad}$, every irreducible component of $\Phi_P$ in the sense of~\cite{PS}
has rank $\ge 2$.

By Lemma~\ref{lem:rootels} (iv) the group $E(R((t)))$ is generated by root elements
$X_\alpha(v)$, $\alpha\in\Phi_P$, $v\in V_\alpha\otimes_R R((t))$. Write $v=\sum\limits_{j=-k}^\infty v_jt^j$, $v_j\in V_\alpha$, $k\ge 0$.
By the equality~\eqref{eq:sum} of Lemma~\ref{lem:rootels} we have
$$
X_\alpha(v)=X_\alpha\left(\sum\limits_{j=-k}^0v_jt^j\right)X_\alpha\l(\sum\limits_{j=1}^\infty v_jt^j\r)
\prod\limits_{i\ge 2}X_{i\alpha}(u_i)$$
for some $u_i\in V_{i\alpha}\otimes_R R((t))$. Applying induction on the height of $\alpha$, we conclude
that $X_\alpha(v)$ decomposes into a product of elements from $E(R[t^{-1}])$ and $E(R[[t]])$, that is,
$$
E\bigl(R((t))\bigr)=\l<E(R[t^{-1}]),E(R[[t]])\r>.
$$
Similarly, one concludes that $E(R[t,t^{-1}])$ is generated by elements $X_\alpha(t^n u)$, $n\in\ZZ$,
$u\in V_\alpha$, $\alpha\in\Phi_P$.
Consequently, in order to prove~\eqref{eq:EEE},
it is enough to show that
for any $\beta\in\Phi_P$, $v\in V_\beta\otimes_R R[[t]]$
we have
\begin{equation}\label{eq:[tt]-v}
E(R[t,t^{-1}])X_\beta(v)\subseteq E(R[[t]])E(R[t,t^{-1}]).
\end{equation}

For any $R$-algebra $R'$, any ideal $I\subseteq R'$, and any additively closed set
$\Psi\subseteq X^*(S)\setminus\{0\}$, we set
$$
U_\Psi(I)=\l<X_\alpha(u),\ \alpha\in\Psi,\ u\in V_\alpha\otimes_R I\r>\subseteq U_\Psi(R'),
$$
and
$$
E(I)=\l<X_\alpha(u),\ \alpha\in\Phi_P,\ u\in V_\alpha\otimes_R I\r>\subseteq E(R').
$$
We show that for any $\beta\in\Phi_P$, $v\in V_\beta\otimes_R R[[t]]$ one has
\begin{equation}\label{eq:betaN}
X_\beta(v)\in E(t^NR[[t]])E(R[t])\quad\mbox{for any}\ N\ge 0.
\end{equation}
More precisely, set $(\beta)=\{i\beta\ |\ i\ge 1\}$; we show that
\begin{equation}\label{eq:betaN-2}
X_\beta(v)\in U_{(\beta)}(t^NR[[t]])\cdot U_{(\beta)}(R[t])
\end{equation}
arguing by descending induction on the height of $\beta$.
Since $V_\beta$ is a finitely generated projective $R$-module,
we can write $v=v_1+t^Nv_2$, where $v_1\in V_\beta\otimes_R R[t]$ and $v_2\in V_\beta\otimes_R R[[t]]$.
Then~\eqref{eq:sum} of Lemma~\ref{lem:rootels} implies that
\begin{equation}\label{eq:betaN-3}
X_\beta(v)=X_\beta(t^Nv_2)\cdot\prod_{i>1}X_{i\beta}(q_\beta^i(v,-v_1))\cdot (X_\beta(-v_1))^{-1}.
\end{equation}
By the induction hypothesis, for any $i>1$ one has
\begin{equation}\label{eq:betaN-4}
X_{i\beta}(q_\beta^i(v,-v_1))\in U_{(i\beta)}(t^NR[[t]])\cdot U_{(i\beta)}(R[t])\subseteq
U_{(\beta)}(t^NR[[t]])\cdot U_{(\beta)}(R[t]).
\end{equation}
Note that by~\eqref{eq:Chev} of Lemma~\ref{lem:rootels}
the group $U_{(\beta)}(R[t])$ normalizes the group $U_{(\beta)}(t^NR[[t]])$.
Then, clearly,~\eqref{eq:betaN-3} and~\eqref{eq:betaN-4} together imply~\eqref{eq:betaN-2}. This finishes the
proof of~\eqref{eq:betaN}.

Next, we show that for any $n\in\ZZ$,
$u\in V_\alpha$, $\alpha\in\Phi_P$, and $M\ge 0$ there is $N\ge 0$ such that
\begin{equation}\label{eq:nNM}
X_\alpha(t^nu)E(t^NR[[t]])X_\alpha(t^nu)^{-1}\subseteq E(t^MR[[t]]).
\end{equation}
Clearly, this statement and~\eqref{eq:betaN} together imply~\eqref{eq:[tt]-v}.

By~\cite[Lemma 11]{PS} we know that if $N\ge 3$, then $E(t^NR[[t]])$ is contained in the subgroup
of $E(R[[t]])$ generated by $X_\gamma(V_\gamma\otimes_R t^{\lfloor\frac N3\rfloor}R[[t]])$
for all $\gamma\in\Phi_P\setminus\ZZ\alpha$. On the other hand, for any such $\gamma$ by the Chevalley commutator
formula~\eqref{eq:Chev} of Lemma~\ref{lem:rootels} we have
$$
\left[X_\alpha(t^nu),X_\gamma(V_\gamma\otimes_R t^{\lfloor\frac N3\rfloor}R[[t]])\right]\subseteq
E\l(t^{\lfloor\frac N3\rfloor-|\Phi_P|\cdot |n|}R[[t]]\r).
$$
This implies the claim~\eqref{eq:nNM}.
\end{proof}


\section{Proof of the main results}

\subsection{Diagonal argument for loop reductive groups}

Our main results are based on the following observation.

\begin{lem}["diagonal argument"]\label{lem:diag}
Let $k$ be a field of characteristic $0$. Let $G$ be a loop reductive group over $R=k[x_1^{\pm 1},\ldots,x_n^{\pm 1}]$.
For any integer $d>0$, denote by $f_{z,d}$ (respectively, $f_{w,d}$) the composition of $k$-homomorphisms
$$
R\to k[z_1^{\pm 1},\ldots,z_n^{\pm 1},w_1^{\pm 1},\ldots,w_n^{\pm 1}]
\to k[z_1^{\pm 1},\ldots,z_n^{\pm 1},(z_1w_1^{-1})^{\pm \frac1d},\ldots,(z_nw_n^{-1})^{\pm \frac1d}]
$$
sending $x_i$ to $z_i$ (respectively, to $w_i$) for any $1\le i\le n$. Then there is $d>0$ such that
$$
f_{z,d}^*(G)\cong f_{w,d}^*(G)
$$
as group schemes over $k[z_1^{\pm 1},\ldots,z_n^{\pm 1},(z_1w_1^{-1})^{\pm \frac1d},\ldots,(z_nw_n^{-1})^{\pm \frac1d}]$.
\end{lem}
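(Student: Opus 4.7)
The approach combines the loop reductive structure of $G$ with the algebraic identity $w_i=z_iu_i^{-d}$ in $S_d:=k[z_i^{\pm 1},u_i^{\pm 1}]$, which implies $w_i^{1/d}=z_i^{1/d}u_i^{-1}$ once a $d$-th root of $z_i$ is adjoined. First, invoking Definition~\ref{def:loop} and the loop reductive hypothesis, I would pick an integer $d\ge 1$ and a finite Galois extension $k_\lambda/k$ containing a primitive $d$-th root of unity $\xi_d$ such that $G$ is represented by a continuous loop cocycle $\eta\in Z^1(\Gamma_{\lambda,d},\Aut(G_0)(k_\lambda))$, where $\Gamma_{\lambda,d}=(\ZZ/d\ZZ)^n\semil\Gal(k_\lambda/k)$. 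Equivalently, $G$ is the Galois descent of $G_0\otimes_k\tilde R$ to $R$, with $\tilde R=k_\lambda[x_1^{\pm 1/d},\ldots,x_n^{\pm 1/d}]$. This is the value of $d$ I take in the lemma.

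Next, I would introduce the cover
$$
\tilde S_d:=k_\lambda[z_1^{\pm 1/d},\ldots,z_n^{\pm 1/d},u_1^{\pm 1},\ldots,u_n^{\pm 1}]
$$
of $S_d$, adjoining $d$-th roots of each $z_i$ but \emph{not} of $u_i$. The extension $\tilde S_d/S_d$ is Galois with group precisely $\Gamma_{\lambda,d}$: the element $(a,\gamma)$ sends $z_i^{1/d}\mapsto\xi_d^{a_i}z_i^{1/d}$, fixes each $u_i$, and acts on $k_\lambda$ by $\gamma$. Crucially, $\tilde S_d$ simultaneously trivializes both pullbacks: $f_{z,d}$ extends to an $R$-algebra morphism $\iota_z\colon\tilde R\to\tilde S_d$ via $x_i^{1/d}\mapsto z_i^{1/d}$, while $f_{w,d}$ extends to $\iota_w\colon\tilde R\to\tilde S_d$ via $x_i^{1/d}\mapsto z_i^{1/d}u_i^{-1}=w_i^{1/d}$, using the identity $(z_i^{1/d}u_i^{-1})^d=z_iu_i^{-d}=w_i$.

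The decisive point is then a direct Galois-action computation: the homomorphisms $\Gal(\tilde S_d/S_d)\to\Gal(\tilde R/R)$ induced by $\iota_z$ and $\iota_w$ are both the \emph{identity} map on $\Gamma_{\lambda,d}$. Indeed, since $u_i\in S_d$ is fixed by $\Gal(\tilde S_d/S_d)$, the element $(a,\gamma)$ multiplies $\iota_w(x_i^{1/d})=z_i^{1/d}u_i^{-1}$ by $\xi_d^{a_i}$, exactly as it multiplies $\iota_z(x_i^{1/d})=z_i^{1/d}$. Therefore, the descent data representing $f_{z,d}^*G$ and $f_{w,d}^*G$ in $Z^1(\Gamma_{\lambda,d},\Aut(G_0)(\tilde S_d))$ are both given by the same cocycle $\eta$, and hence $f_{z,d}^*G$ and $f_{w,d}^*G$ are isomorphic as $S_d$-group schemes.

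In my view, the only real obstacle is choosing the right cover $\tilde S_d$. The naive larger cover $k_\lambda[z_i^{\pm 1/d},u_i^{\pm 1/d}]$, with $d$-th roots of the $u_i$ also adjoined, would have Galois group $(\ZZ/d\ZZ)^{2n}\semil\Gal(k_\lambda/k)$ over $S_d$, and the two induced maps to $\Gamma_{\lambda,d}$ would be $(a,b,\gamma)\mapsto(a,\gamma)$ and $(a,b,\gamma)\mapsto(a-b,\gamma)$; comparing the resulting cocycles would then require constructing an explicit Kummer-theoretic intertwining cochain reflecting the triviality of $[z_iw_i^{-1}]\in H^1(S_d,\mu_d^n)$. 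Avoiding this extra step by using the smaller cover $\tilde S_d$ above is the essential simplification that makes the comparison tautological.
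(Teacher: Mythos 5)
Your proof is correct, and it reaches the conclusion by a genuinely more explicit route than the paper. The paper also begins by representing $G$ by a loop cocycle, but then works entirely at the level of continuous non-abelian cohomology of the profinite group $\pi_1(X_z\times_k X_w,e)\cong(\Nu\times\Mu)\semil\Gamma$: it twists by $\eta_1$, applies the inflation--restriction sequence for the closed normal subgroup $\Mu=\pi_1(X_t,e)$, chooses $d$ so that $(F^d)^*$ kills the restriction of the difference class to $\Mu$, and then uses the restriction to the diagonal $X_u$ (where the two cocycles agree by construction) to conclude that the inflated class is trivial. You instead choose $d$ at the outset as a level through which the whole loop cocycle factors, build the single finite Galois cover $\tilde S_d=k_\lambda[z_i^{\pm 1/d},u_i^{\pm 1}]$ of the target ring, and observe that both $f_{z,d}$ and $f_{w,d}$ lift to this cover equivariantly with respect to the \emph{identity} of $\Gamma_{\lambda,d}$, so the two descent data are literally the same cocycle $\eta$; the identity $w_i^{1/d}=z_i^{1/d}u_i^{-1}$ with $u_i$ Galois-invariant is exactly the mechanism that, in the paper's version, makes $(F^d)^*\circ\res$ trivial. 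Your argument is shorter and avoids both the twisting construction and the inflation--restriction formalism; what the paper's formulation buys is that it isolates the precise cohomological input (the two classes agree on the diagonal, and the restriction to $\Mu$ is torsion) without having to exhibit a common trivializing cover, which meshes with the profinite framework of Gille--Pianzola used elsewhere in the text. The one point worth stating explicitly in your write-up is the (standard, but used crucially) fact that a class in $H^1\bigl(\pi_1(X,e),\Aut(G_0)(\bar k)\bigr)$ is represented by a cocycle inflated from some finite quotient $\Gamma_{\lambda,d}$ with values in $\Aut(G_0)(k_\lambda)$, since this is where your $d$ comes from and it is the exact analogue of the paper's choice of $d$ killing $(F^d)^*(\res(\xi))$.
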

\begin{proof}
Let $G_0$ be a split reductive group over $k$ such that $G$ is a twisted form of $G_0$. Let $A_0=\Aut(G_0)$
be the group scheme of automorphisms of $G_0$. Denote by $\bar k$ the algebraic closure of $k$,
and by $\Gamma$ the Galois group $\Gal(\bar k/k)$.
We also introduce the following auxiliary notation: we write $X_x$ for the $k$-scheme
$\Spec k[x_1^{\pm 1},\ldots,x_n^{\pm 1}]$,
$X_z$ for $\Spec k[z_1^{\pm 1},\ldots,z_n^{\pm 1}]$, etc.

According to Definition~\ref{def:loop}, $G$ is given by a cocycle $\eta$ in $H^1\bigl(\pi_0(X_x,e),A_0(\bar k)\bigr)$.
Considering the description~\eqref{eq:pi1X} of~\S~\ref{ssec:loop}, we can assume that
$$
\eta\in H^1\Bigl(\Gal\bigl(\bar k[x_1^{\pm\frac 1d},\ldots,x_n^{\pm\frac 1d}]/k[x_1^{\pm 1},\ldots,x_n^{\pm 1}]\bigr),
A_0(\bar k)\Bigr)
$$
for some integer $d>0$, and we know that
$$
\Gal\bigl(\bar k[x_1^{\pm\frac 1d},\ldots,x_n^{\pm\frac 1d}]/k[x_1^{\pm 1},\ldots,x_n^{\pm 1}]\bigr)=M_x\semil\Gamma,
$$
where $M_x\cong (\ZZ/d\ZZ)^n$ acts on $\bar k[x_1^{\pm\frac 1d},\ldots,x_n^{\pm\frac 1d}]$ by sending
$x_i^{\frac 1d}$ to $\xi_d^{k_i}x_i^{\frac 1d}$, for any $(k_1,\ldots,k_n)\in M_x$.
We will denote by $M_z$ and $M_w$ respectively the group $(\ZZ/d\ZZ)^n$ operating in the same way on
$\bar k[z_1^{\pm\frac 1d},\ldots,z_n^{\pm\frac 1d}]$ and $\bar k[w_1^{\pm\frac 1d},\ldots,w_n^{\pm\frac 1d}]$.

Denote by $i_z$ (respectively, $i_w$) the $k$-homomorphism
$$
k[x_1^{\pm 1},\ldots,x_n^{\pm 1}]\to k[z_1^{\pm 1},\ldots,z_n^{\pm 1},w_1^{\pm 1},\ldots,w_n^{\pm 1}]
$$
sending $x_i$ to $z_i$ (respectively, to $w_i$) for any $1\le i\le n$. Consider the images
$i_z^*(\eta)$ and $i_w^*(\eta)$ of $\eta$ in $H^1\bigl(\pi_0(X_z\times_k X_w,e),A_0(\bar k)\bigr)$
as elements of
$$
H^1\bigl((M_z\times M_w)\semil\Gamma,A_0(\bar k)\bigr).
$$
Denote by $\Delta$ the diagonal subgroup of $M_z\times M_w$. The subgroup $\Delta\semil\Gamma$
of $(M_z\times M_w)\semil\Gamma$ is closed, and it is straightforward to check
that
$$
i_z^*(\eta)|_{\Delta\semil\Gamma}=i_w^*(\eta)|_{\Delta\semil\Gamma}.
$$
Since
$$
\bigl(\bar k[z_1^{\pm\frac 1d},\ldots,z_n^{\pm\frac 1d},
w_1^{\pm\frac 1d},\ldots,w_n^{\pm\frac 1d}]\bigr)^{\Delta\semil\Gamma}=
k[z_1^{\pm 1},\ldots,z_n^{\pm 1},
(z_1w_1^{-1})^{\pm\frac 1d},\ldots,(z_nw_n^{-1})^{\pm\frac 1d}],
$$
we conclude that $f_{z,d}^*(G)\cong f_{w,d}^*(G)$, as required.

\end{proof}

We introduce additional notation that will be used every time when we apply Lemma~\ref{lem:diag} in
proofs of other statements.

\begin{nota}\label{nota:diag}
In the setting of the claim of Lemma~\ref{lem:diag}, set
$$
t_i=(z_iw_i^{-1})^{1/d},\quad 1\le i\le n,
$$
where $z_i,w_i$, and $d$ are as in that lemma. Note that this is equivalent to
$$
z_i=w_it_i^d,\quad 1\le i\le n.
$$
We denote by $G_z$ the group scheme over
$k[z_1^{\pm 1},\ldots,z_n^{\pm 1}]$ which is the pull-back of $G$ under the $k$-isomorphism
$$k[x_1^{\pm 1},\ldots,x_n^{\pm 1}]\xrightarrow{x_i\mapsto z_i}k[z_1^{\pm 1},\ldots,z_n^{\pm 1}].$$
The group scheme $G_w$ over $k[w_1^{\pm 1},\ldots,w_n^{\pm 1}]$ is defined analogously. Note that $G_z$ and $G_w$ are isomorphic after pull-back to
$$
k[z_1^{\pm 1},\ldots,z_n^{\pm 1},t_1^{\pm 1},\ldots,t_n^{\pm 1}]=
k[w_1^{\pm 1},\ldots,w_n^{\pm 1},t_1^{\pm 1},\ldots,t_n^{\pm 1}].
$$
\end{nota}

\subsection{Proof of Theorem~\ref{thm:2fields} on ${\mathcal R}$-equivalence class groups}\label{ssec:2fields}


\begin{proof}[Proof of Theorem~\ref{thm:2fields}]
The surjectivity of the natural map
$$
G\bigl(k(x_1,\ldots,x_n)\bigr)/{\mathcal R}\to G\bigl(k((x_1))\ldots((x_n))\bigr)/{\mathcal R}
$$
follows from Corollary~\ref{cor:2fields-surj}. To prove the injectivity,
recall that, since $G$ has a maximal torus over
$k[x_1^{\pm 1},\ldots,x_n^{\pm 1}]$,  it is loop reductive by~\cite[Corollary 6.3]{GiPi-mem}.
Thus, we can apply Lemma~\ref{lem:diag} to $G$. We use Notation~\ref{nota:diag}.



Consider the following commutative diagram, where the horizontal maps $j_1$ and $j_2$ are the natural ones.

\begin{tikzpicture}
\matrix (m) [matrix of math nodes,row sep=25pt,column sep=3em,minimum width=2em]
  {
     G\Bigl(k(x_1,\ldots,x_n)\Bigr)/{\mathcal R} & G\Bigl(k((x_1))\ldots((x_n))\Bigr)/{\mathcal R} \\
    G_z\Bigl(k(z_1,\ldots,z_n,t_1,\ldots,t_n)\Bigr)/{\mathcal R}& G_z\Bigl(k((z_1))\ldots((z_n))\Bigr)/{\mathcal R}   \\
    G_w\Bigl(k(w_1,\ldots,w_n,t_1,\ldots,t_n)\Bigr)/{\mathcal R} & G_w\Bigr(k(w_1,\ldots,w_n)((t_1))\ldots((t_n))\Bigr)/{\mathcal R}\\
  };
\path[-stealth]
     (m-1-1)       edge node [above] {$j_1$}  (m-1-2)
    (m-3-1.east) edge node [above] {$j_2$} (m-3-2)
    (m-3-1.east) edge node [below] {$\cong$} (m-3-2)
;
\path[-stealth](m-1-2) edge node [left] {$\st f_2\colon x_i\mapsto z_i$}  (m-2-2);
\path[-stealth](m-1-2) edge node [right] {$\cong$}  (m-2-2);
\path [-stealth]
    (m-1-1)  edge node [left] {$\st f_1\colon x_i\mapsto z_i$}(m-2-1)
    (m-1-1)  edge node [right] {$\cong$}(m-2-1)
    (m-2-1) edge node [right] {$\cong$} (m-3-1)
    (m-2-1) edge node [left] {$\st g_1\colon z_i\mapsto w_it_i^d$} (m-3-1)
    (m-2-2) edge node [left] {$\st g_2\colon  z_i\mapsto w_it_i^d$} (m-3-2)
;

\end{tikzpicture}

The map $f_1$ in this diagram is an isomorphism, since $G_z$ is defined over $k(z_1,\ldots,z_n)$, and by~\cite[\S 16.2, Proposition 2]{Vos},
for any reductive group $H$ over an infinite field $l$ one has
$H(l)/{\mathcal R}\cong H\bigl(l(t)\bigr)/{\mathcal R}$. The map $f_2$ is an isomorphism be definition.
The map $g_1$ is an isomorphism by Lemma~\ref{lem:diag}.
The map $j_2$ is an isomorphism, since $G_w$ is defined over $k(w_1,\ldots,w_n)$, and by~\cite[Corollaire 0.3]{Gil-spec} for any reductive group $H$ over a
field $l$ of characteristic $\neq 2$ one has $H(l)/{\mathcal R}\cong H\bigl(l((t))\bigr)/{\mathcal R}$.

Since
$$
g_2\circ f_2\circ j_1=j_2\circ g_1\circ f_1
$$
is an isomorphism, we conslude that the map $j_1$ is injective.
\end{proof}


\subsection{Proof of Theorem~\ref{thm:main}}\label{ssec:mainproof}

In order to prove Theorem~\ref{thm:main}, we still need to prove some technical lemmas.

\begin{lem}\label{lem:zwt}
Let $k$ be an arbitrary field, $A$ be a commutative $k$-algebra, and let $G$ be a reductive
group defined over
$A[z_1^{\pm 1},\ldots,z_n^{\pm 1}]$ such
that every semisimple normal subgroup of $G$  contains $(\Gm_{,k})^2$.
For any set of integers $d_i>0$, $1\le i\le n$, the map
$$
K_1^G\bigl(A[z_1^{\pm 1},\ldots,z_n^{\pm 1},t_1,\ldots,t_n]\bigr)\xrightarrow{z_i\mapsto w_it_i^{d_i}}
K_1^G\bigl(A\otimes _k k(w_1,\ldots,w_n)[t_1^{\pm 1},\ldots,t_n^{\pm 1}]\bigr)
$$
is injective.
\end{lem}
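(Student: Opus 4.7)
The plan is to factor the substitution map into three simpler ring maps
\[
A[z^{\pm 1},t] \xrightarrow{\iota_t} A[z^{\pm 1},t^{\pm 1}] \xrightarrow{\ \sim\ } A[w^{\pm 1},t^{\pm 1}] \xrightarrow{\iota_w} A\otimes_k k(w)[t^{\pm 1}],
\]
where $\iota_t$ inverts each $t_i$, the middle arrow is the ring isomorphism induced by the change of variables $z_i\leftrightarrow w_it_i^{d_i}$ (with inverse $w_i\leftrightarrow z_it_i^{-d_i}$), and $\iota_w$ is the localization at the multiplicative set $k[w_1,\ldots,w_n]\setminus\{0\}$. The composition reproduces the map $z_i\mapsto w_it_i^{d_i}$, so it suffices to prove injectivity of each factor on $K_1^G$.

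For $\iota_t$, I would invert the $t_i$ one at a time. At the $i$th stage, Lemma~\ref{lem:f} applies with base ring $A_i = A[z^{\pm 1},t_1^{\pm 1},\ldots,t_{i-1}^{\pm 1},t_{i+1},\ldots,t_n]$ (over which $G$ is defined by base change) and monic polynomial $f=t_i\in A_i[t_i]$, yielding injectivity on $K_1^G$. The isotropy hypothesis is preserved because $(\Gm_{,k})^2\subseteq G$ base-changes to $(\Gm_{,A_i})^2\subseteq G_{A_i}$. The middle arrow is a ring isomorphism, so trivially bijective on $K_1^G$.

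For $\iota_w$, I would write $A\otimes_k k(w)[t^{\pm 1}] = \varinjlim_h A[w^{\pm 1},t^{\pm 1}][h^{-1}]$ as $h$ ranges over nonzero elements of $k[w_1,\ldots,w_n]$. Since $K_1^G$ commutes with filtered colimits (both $G(-)$ and $E(-)$ do, as they are defined by finite-type data), it suffices to prove that each localization map $K_1^G(A[w^{\pm 1},t^{\pm 1}])\to K_1^G(A[w^{\pm 1},t^{\pm 1}]_h)$ is injective. I would handle this one variable at a time: setting $B_i=A[t^{\pm 1},w_1^{\pm 1},\ldots,w_{i-1}^{\pm 1},w_{i+1}^{\pm 1},\ldots,w_n^{\pm 1}]$, each step reduces to a Laurent-polynomial analogue of Lemma~\ref{lem:f}: \emph{for $G$ reductive over $B[s^{\pm 1}]$ satisfying the isotropy hypothesis, and $f\in B[s]$ monic coprime to $s$, the map $K_1^G(B[s^{\pm 1}])\to K_1^G(B[s^{\pm 1}]_f) = K_1^G(B[s]_{sf})$ is injective.} I expect this Laurent version to follow from the same Quillen--Suslin-style reasoning as the proof of Lemma~\ref{lem:f}, using the Laurent-polynomial counterparts of Theorem 1.1 and Lemma 2.3 of~\cite{St-poly}.

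The main obstacle will be precisely this last step. The pulled-back group $\phi^*G$ over $A[w^{\pm 1},t^{\pm 1}]$ depends essentially on both the $w_i$ and the $t_i$, through the combinations $w_it_i^{d_i}$, and does \emph{not} descend to a reductive group over $A[w,t^{\pm 1}]$, since each $w_it_i^{d_i}$ fails to be invertible when the $w_i$ are not. One therefore cannot reduce directly to the polynomial-ring form of Lemma~\ref{lem:f} stated in the paper and must work intrinsically with the group defined over the Laurent polynomial ring throughout.
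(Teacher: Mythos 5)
Your factorization works for its first two arrows: inverting the $t_i$ one at a time is a legitimate application of Lemma~\ref{lem:f} with $f=t_i$ over a base ring over which $G$ is defined by base change, and the middle arrow is a ring isomorphism. The gap is the third arrow, and you have in fact put your finger on it in your final paragraph without resolving it. The ``Laurent-polynomial analogue of Lemma~\ref{lem:f}'' you invoke --- injectivity of $K_1^G(B[s^{\pm 1}])\to K_1^G(B[s^{\pm 1}]_f)$ for $G$ defined over $B[s^{\pm 1}]$ itself --- does not follow from ``the same Quillen--Suslin-style reasoning.'' The machinery behind Lemma~\ref{lem:f} (Theorem 1.1 and Lemma 2.3 of~\cite{St-poly}) is an $\Aff^1$-homotopy/local-global argument that needs a genuine polynomial variable over the ring of definition of $G$, so that one can specialize that variable to $0$ and compare fibres. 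Once $G$ depends on the variable being localized (here through $z_i=w_it_i^{d_i}$), no such specialization exists, and the statement you need is of essentially the same depth as the main theorem of the paper. So the proposal reduces the lemma to an unproved claim that is at least as hard as what it is meant to prove.

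The fix --- and the actual content of the lemma --- is to \emph{not} invert the $t_i$ first. Keeping $t$ a polynomial variable, one writes $A\otimes_k k(w)[t^{\pm 1}]$ as a filtered colimit, over monic $g(w)\in k[w]$ with $g(0)\neq 0$, of the rings $A\otimes_k k[w^{\pm1},t^{\pm1}]_g$. Under $w=zt^{-d}$ one has $g(w)=t^{-Nd}f(t)$ with $f\in k[z^{\pm1}][t]$ of invertible leading coefficient, so that $A\otimes_k k[w^{\pm1},t^{\pm1}]_g = A\otimes_k k[z^{\pm1},t]_{tf}$ is a localization of the polynomial ring $A[z^{\pm1}][t]$ at a monic (up to a unit) polynomial in $t$ over the ring $A[z^{\pm1}]$ over which $G$ is genuinely defined; the ordinary Lemma~\ref{lem:f} then applies in one shot, and one concludes by passing to the colimit. (The paper also runs this one variable at a time, by induction on $n$, replacing $k$ by $k(w_1)$ and $A$ by $A\otimes_k k(w_1)[t_1^{\pm1}]$ at each step, precisely so that $g$ is a one-variable monic polynomial.) The substitution $z_i\mapsto w_it_i^{d_i}$ is engineered exactly so that denominators in $w$ turn into monic polynomials in $t$ over the ring of definition of $G$; your decomposition discards this after the first arrow.
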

\begin{proof}
We prove the claim by induction on $n\ge 0$. The case $n=0$ is trivial.
To prove the induction step for $n\ge 1$, it is enough to show that
$$
\phi:K_1^G\bigl(A\otimes_k k[z_1^{\pm 1},\ldots,z_n^{\pm 1},t_1,\ldots,t_n]\bigr)
\xrightarrow{z_1\mapsto w_1t_1^{d_1}}
K_1^G\bigl(A\otimes_k k(w_1)[t_1^{\pm 1}][z_{2}^{\pm 1},\ldots,z_n^{\pm 1},t_{2},\ldots,t_n]\bigr)
$$
is injective. Indeed, after that we can apply the induction assumption with $k$ substituted by $k(w_1)$ and $A$
substituted by $A\otimes_k k(w_1)[t_1^{\pm 1}]$. Set
$$
B=A[z_{2}^{\pm 1},\ldots,z_n^{\pm 1},t_{2},\ldots,t_n]$$
and omit for simplicity the subscript $1$. Then we need to show that the map
$$
\phi:K_1^G\bigl(B[z^{\pm 1},t]\bigr)\xrightarrow{z\mapsto wt^d} K_1^G\bigl(B\otimes_k k(w)[t^{\pm 1}]\bigr)
$$
is injective. Here $G$ is defined over $B[z^{\pm 1}]$.
We have
$$
B\otimes_k k(w)[t^{\pm 1}]=
\varinjlim\limits_{g} B\otimes_k k[w^{\pm 1}]_g[t^{\pm 1}]=\varinjlim\limits_{g} B\otimes_k k[w^{\pm 1},t^{\pm 1}]_g,
$$
where $g=g(w)$ runs over all monic polynomials in $k[w]$ with $g(0)\neq 0$. Since $\phi(z)=wt^d$, we have
$g(w)=g(\phi(z)t^{-d})=t^{-Nd}f(t)$ for a suitable integer $N$, where $f(t)$ is a polynomial in $t$ with coefficients in
$k[\phi(z)^{\pm 1}]$ such that its leading coefficient is invertible. Then by Lemma~\ref{lem:f} the natural map
$$
K_1^G\bigl(B[z^{\pm 1},t]\bigr)\xrightarrow{z\mapsto wt^d} K_1^G\bigl(B\otimes_k k[w^{\pm 1},t^{\pm 1}]_g\bigr)=
K_1^G\bigl(B\otimes_k k[\phi(z)^{\pm 1},t]_{tf}\bigr)
$$
is injective. Since $K_1^G$ commutes with filtered direct limits, we conclude that $\phi$ is injective.
\end{proof}

\begin{lem}\label{lem:const-inj}
Let $k$ be an arbitrary field, let $A$ be a commutative $k$-algebra, and let $G$
be a reductive group scheme over $A$ such that every
semisimple normal subgroup of $G$ contains $(\Gm_{,A})^2$. For any $n\ge 0$
the natural map
$$
K_1^G\bigl(A[t_1^{\pm 1},\ldots,t_n^{\pm 1}]\bigr)\to K_1^G\bigl(A\otimes_k k(t_1,\ldots,t_n)\bigr)
$$
is injective.
\end{lem}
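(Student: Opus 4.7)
The plan is to reduce the general case to $n=1$ by a straightforward induction, and to prove the $n=1$ case by combining Theorem~\ref{thm:E-decomp} with Lemma~\ref{lem:f}.

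For the induction on $n$ (base case $n=0$ trivial), given $n\ge 1$ I factor the natural map as
$$
K_1^G\bigl(A[t_1^{\pm 1},\ldots,t_n^{\pm 1}]\bigr) \to K_1^G\bigl((A\otimes_k k(t_1))[t_2^{\pm 1},\ldots,t_n^{\pm 1}]\bigr) \to K_1^G\bigl(A\otimes_k k(t_1,\ldots,t_n)\bigr).
$$
The second arrow is an instance of the lemma itself, applied with $k$ replaced by $k(t_1)$, $A$ by $A\otimes_k k(t_1)$, and $n$ by $n-1$, so it is injective by the induction hypothesis. The first arrow is an instance of the $n=1$ case applied with $A$ replaced by the $k$-algebra $B:=A[t_2^{\pm 1},\ldots,t_n^{\pm 1}]$ and $G$ replaced by $G_B$. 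Hence it suffices to prove the $n=1$ case for arbitrary commutative $k$-algebras.

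For $n=1$: suppose $x\in G(A[t^{\pm 1}])$ has trivial image in $K_1^G(A\otimes_k k(t))$. Writing $A\otimes_k k(t)=\varinjlim_g A[t^{\pm 1}]_g$ with $g$ running over monic $g\in k[t]$ satisfying $g(0)\ne 0$, and using that $E$ commutes with filtered colimits, I can choose such a $g$ with $x\in E(A[t^{\pm 1}]_g)=E(A[t]_{tg})$. The key observation is that $g(0)\ne 0$ makes $g$ a unit in $A[[t]]$, so $A[t^{\pm 1}]_g$ embeds naturally into $A((t))$, forcing $x\in E(A((t)))$. Theorem~\ref{thm:E-decomp} then provides a decomposition $x=x_1x_2$ with $x_1\in E(A[[t]])$ and $x_2\in E(A[t^{\pm 1}])$.

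Setting $y:=x_1=xx_2^{-1}$, the affineness of $G$ combined with the identity $A[t^{\pm 1}]\cap A[[t]]=A[t]$ inside $A((t))$ yields $y\in G(A[t^{\pm 1}])\cap G(A[[t]])=G(A[t])$; since $x_2\in E(A[t^{\pm 1}])\subseteq E(A[t^{\pm 1}]_g)$, we also have $y\in E(A[t]_{tg})$. As $tg$ is monic in $A[t]$, Lemma~\ref{lem:f} forces $y\in E(A[t])$, and therefore $x=yx_2\in E(A[t^{\pm 1}])$, completing the proof. I do not anticipate a serious obstacle; the one nontrivial input is Theorem~\ref{thm:E-decomp}, which I apply as a black box, while everything else is routine bookkeeping with Lemma~\ref{lem:f} and the elementary identity $A[t^{\pm 1}]\cap A[[t]]=A[t]$.
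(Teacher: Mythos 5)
Your proof is correct, and your induction is essentially the paper's: both reduce to the one-variable case over an auxiliary commutative $k$-algebra, merely peeling off the variables in the opposite order (the paper rationalizes $t_1,\ldots,t_{n-1}$ first via the inductive hypothesis and finishes with the $n=1$ case over $l=k(t_1,\ldots,t_{n-1})$, you do the $n=1$ case for $t_1$ over $B=A[t_2^{\pm 1},\ldots,t_n^{\pm 1}]$ first). Where you genuinely diverge is the factorization at the heart of the one-variable step. The paper stays inside Zariski localizations: given $x\in G(A\otimes_k l[t^{\pm 1}])\cap E(A\otimes_k l[t]_{tg})$, it invokes the patching result \cite[Lemma 2.3]{St-poly} for the comaximal localizations at $t$ and at $g$ to write $x=x_1x_2$ with $x_1\in E(A\otimes_k l[t]_{t})$ and $x_2\in E(A\otimes_k l[t]_{g})$, pins $x_2$ down in $G(A\otimes_k l[t])$, and applies Lemma~\ref{lem:f} to the monic $g$. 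You instead push $x$ into $A((t))$ (using that $g(0)\neq 0$ makes $g$ invertible in $A[[t]]$) and apply Theorem~\ref{thm:E-decomp} to split $x=x_1x_2$ with $x_1\in E(A[[t]])$ and $x_2\in E(A[t^{\pm 1}])$, then pin $y=xx_2^{-1}$ down in $G(A[t^{\pm 1}])\cap G(A[[t]])=G(A[t])$ and apply Lemma~\ref{lem:f} to the monic $tg$. Both factorizations do the same job; yours reuses the keystone Theorem~\ref{thm:E-decomp} and is in effect the same manoeuvre as Corollary~\ref{cor:t-t-inj}, at the cost of routing through formal power series, while the paper's version needs only the finite localization patching lemma. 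The surrounding bookkeeping in your argument (the identification $E(A[t^{\pm 1}]_g)=E(A[t]_{tg})$, the intersection argument for affine $G$, and the filtered-colimit reduction to a single $g$) is all sound and matches what the paper does implicitly.
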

\begin{proof}
We prove the claim by induction on $n$; the case $n=0$ is trivial.
Set $l=k(t_1,\ldots,t_{n-1})$. By the inductive hypothesis, the map
$$
K_1^G\bigl(A[t_1^{\pm 1},\ldots,t_n^{\pm 1}]\bigr)\to K_1^G\bigl(A[t_n^{\pm 1}]\otimes_k l\bigr)=
K_1^G\bigl(A\otimes_k l[t_n^{\pm 1}]\bigr)
$$
is injective, so it remains to prove the injectivity of the map
$$
K_1^G\bigl(A\otimes_k l[t_n^{\pm 1}]\bigr)\to K_1^G\bigl(A\otimes_k l(t_n)\bigr).
$$
We have $l(t_n)=\varinjlim\limits_{g} l[t_n]_{t_ng}$, where $g\in l[t_n]$ runs over all monic polynomials
coprime to $t_n$. Since $K_1^G$ commutes with filtered direct limits, it remains to show that every map
\begin{equation}\label{eq:coinj}
K_1^G(A\otimes_k l[t_n^{\pm 1}])\to K_1^G(A\otimes_k l[t_n]_{t_ng})
\end{equation}
is injective.
Assume that
$$
x\in G(A\otimes_k l[t_n^{\pm 1}])\cap E(A\otimes_k l[t_n]_{t_ng}).
$$
By~\cite[Lemma 2.3]{St-poly}
there exist $x_1\in E(A\otimes_k l[t_n]_{t_n})$ and $x_2\in E(A\otimes_k l[t_n]_g)$
such that $x=x_1x_2$. We have $x,x_1\in G(A\otimes_k l[t_n^{\pm 1}])$, therefore,
$x_2\in G(A\otimes_k l[t_n^{\pm 1}])$.
Since
$$
G(A\otimes_k l[t_n^{\pm 1}])\cap G(A\otimes_k l[t_n]_g)=G(A\otimes_k l[t_n]),
$$
we have $x_2\in G(A\otimes_k l[t_n])\cap E(A\otimes_k l[t_n]_g)$.
By Lemma~\ref{lem:f} this implies that $x_2\in E(A\otimes_k l[t_n])$. Summing up, we have
$x=x_1x_2\in E(A\otimes_k l[t_n^{\pm 1}])$.
Therefore, the map~\eqref{eq:coinj}
is injective.
\end{proof}

\begin{lem}\label{lem:Lau-frac}
Let $k$ be a  field of characteristic $0$, and let $G$ be a reductive group over
$X=\Spec k[x_1^{\pm 1},\ldots,x_n^{\pm 1}]$, having a maximal $X$-torus and
such that every semisimple normal subgroup of $G$ contains $(\Gm_{,X})^2$. Then



(i) the natural map
$$
K_1^G\bigl(k[x_1^{\pm 1},\ldots,x_n^{\pm 1}]\bigr)\to K_1^G\bigl(k(x_1,\ldots,x_n)\bigr)
$$
is injective;

(ii) one has $K_1^G\bigl(k[x_1^{\pm 1},\ldots,x_n^{\pm 1}]\bigr)=
K_1^G\bigl(k[x_1^{\pm 1},\ldots,x_n^{\pm 1},y_1,\ldots,y_m]\bigr)$
for any $m\ge 0$.
\end{lem}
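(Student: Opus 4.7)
The plan is to combine the diagonal argument of Lemma~\ref{lem:diag} with the injectivity lemmas~\ref{lem:zwt} and~\ref{lem:const-inj} and the polynomial invariance $K_1^G(l[y])=K_1^G(l)$ for constant reductive $G$ over a field $l$ (cited in the introduction from \cite{Sus,Abe,St-poly}). Since $G$ has a maximal $X$-torus it is loop reductive, so Lemma~\ref{lem:diag} applies: fix $d>0$ and use Notation~\ref{nota:diag}, identifying $G_z$ over $k[z_i^{\pm 1}]$ with $G_w$ over $k[w_i^{\pm 1}]$ after base change to $k[z_i^{\pm 1},t_i^{\pm 1}]=k[w_i^{\pm 1},t_i^{\pm 1}]$ via $z_i=w_it_i^d$. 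The strategy is to trade the twisting parameter $z_i$ for $w_it_i^d$ using Lemma~\ref{lem:zwt}, and then control the resulting Laurent free variables $t_i$ with Lemma~\ref{lem:const-inj} and the polynomial free variables $y_j$ (in (ii)) with the polynomial invariance.

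For part~(i), I build the commutative square whose top row is $K_1^G(k[x_i^{\pm 1}])\to K_1^G(k(x_i))$ and whose bottom row is $K_1^{G_w}(k(w_i)[t_i^{\pm 1}])\hookrightarrow K_1^{G_w}(k(w_i,t_i))$ (the latter being Lemma~\ref{lem:const-inj} applied with base field $k(w_1,\ldots,w_n)$). The left vertical is the chain of injections $K_1^G(k[x_i^{\pm 1}])\cong K_1^{G_z}(k[z_i^{\pm 1}])\hookrightarrow K_1^{G_z}(k[z_i^{\pm 1},t_i])\hookrightarrow K_1^{G_w}(k(w_i)[t_i^{\pm 1}])$, arising in order from the relabelling $x_i\mapsto z_i$, the split injection $t_i\mapsto 0$, and Lemma~\ref{lem:zwt} with $A=k$ combined with the identification $G_z\cong G_w$ from Lemma~\ref{lem:diag}. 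The right vertical is $K_1^G(k(x_i))\cong K_1^{G_z}(k(z_i))\hookrightarrow K_1^{G_z}(k(z_i,t_i))=K_1^{G_w}(k(w_i,t_i))$. The square commutes and the left column is a composition of injections, so the top map must be injective.

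For part~(ii), since $K_1^G(k[x_i^{\pm 1}])\hookrightarrow K_1^G(k[x_i^{\pm 1},y_j])$ is split by $y_j\mapsto 0$, it suffices to show the retraction is injective. I construct an injection $\phi_1\colon K_1^G(k[x_i^{\pm 1},y_j])\hookrightarrow K_1^{G_w}(k(w_i,t_i))$ by chaining, after the preliminary split injection $t_i\mapsto 0$, three steps: Lemma~\ref{lem:zwt} with $A=k[y_1,\ldots,y_m]$ lands us in $K_1^{G_w}(k(w_i)[y_j,t_i^{\pm 1}])$; Lemma~\ref{lem:const-inj} with base field $k(w_i)$ and $A=k(w_i)[y_j]$ further injects into $K_1^{G_w}(k(w_i)(t_i)[y_j])$; and the cited polynomial invariance for $G_w$ viewed as a reductive group over the field $k(w_i)(t_i)$ collapses this to $K_1^{G_w}(k(w_i)(t_i))=K_1^{G_w}(k(w_i,t_i))$ via the retraction $y_j\mapsto 0$. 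Together with the (i)-chain injection $\phi_2\colon K_1^G(k[x_i^{\pm 1}])\hookrightarrow K_1^{G_w}(k(w_i,t_i))$, this fits into a commutative square with the identity on the bottom $K_1^{G_w}(k(w_i,t_i))$ and $y_j\mapsto 0$ on the top; both verticals being injective forces the top map $y_j\mapsto 0$ to be injective as well.

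The main subtlety is verifying the commutativity of the (ii) square, namely that $\phi_1=\phi_2\circ(y_j\mapsto 0)$; this follows because the polynomial-invariance isomorphism is induced by $y_j\mapsto 0$, so both paths evaluate a class $[g]\in K_1^G(k[x_i^{\pm 1},y_j])$ by substituting $x_i\mapsto w_it_i^d$ and $y_j\mapsto 0$. The key advantage of this approach is that handling the Laurent variables $t_i$ by Lemma~\ref{lem:const-inj}, which holds for all reductive $G$ satisfying the $(\Gm)^2$ hypothesis, rather than by a Laurent-polynomial homotopy-invariance theorem (which would require simply connectedness), avoids any reduction to the simply connected case.
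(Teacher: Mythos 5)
Your proposal is correct and follows essentially the same route as the paper: the same diagonal-argument square built from Lemma~\ref{lem:diag}, with Lemma~\ref{lem:zwt} handling the substitution $z_i\mapsto w_it_i^d$, Lemma~\ref{lem:const-inj} handling the Laurent variables, and polynomial invariance of $K_1^G$ over a field (from \cite{St-poly}) handling the $y_j$. The only (immaterial) difference is organizational: the paper proves the $\mathbf{y}$-augmented injectivity into $K_1^G\bigl(k(x_1,\ldots,x_n)[\mathbf y]\bigr)$ in one diagram and then applies polynomial invariance over $k(x_1,\ldots,x_n)$, whereas you apply it over $k(w_1,\ldots,w_n,t_1,\ldots,t_n)$ at the end of your chain $\phi_1$.
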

\begin{proof}
First we show that for any $m\ge 0$ the natural map
$$
K_1^G\bigl(k[x_1^{\pm 1},\ldots,x_n^{\pm 1},y_1,\ldots,y_m]\bigr)\to K_1^G\bigl(k(x_1,\ldots,x_n)[y_1,\ldots,y_m]\bigr)
$$
is injective. This includes (i). For shortness, we write ${\bf y}$ instead of $y_1,\ldots,y_m$.

As in~Theorem~\ref{thm:2fields}, we note that $G$ is loop reductive over $k[x_1^{\pm 1},\ldots,x_n^{\pm 1}]$ by~\cite[Corollary 6.3]{GiPi-mem}.
We apply Lemma~\ref{lem:diag} to $G$, and we use Notation~\ref{nota:diag}.
Consider the following commutative diagram. In this diagram, the horizontal maps $j_1$ and $j_2$ are the natural ones,
and all maps always take variables $t_i$ to $t_i$, $1\le i\le n$, and $\mathbf{y}$ to $\mathbf y$.
The isomorphisms $g_1$ and $g_2$ exist by Lemma~\ref{lem:diag}.

\begin{tikzpicture}
\matrix (m) [matrix of math nodes,row sep=25pt,column sep=3em,minimum width=2em]
  {
   K_1^G\Bigl(k[x_1^{\pm 1},\ldots,x_n^{\pm 1},{\bf y}]\Bigr) & K_1^G\Bigl(k(x_1,\ldots,x_n)[{\bf y}]\Bigr) \\
    K_1^{G_z}\Bigl(k[z_1^{\pm 1},\ldots,z_n^{\pm 1},t_1,\ldots,t_n,{\bf y}]\Bigr)& K_1^{G_z}\Bigl(k(z_1,\ldots,z_n,t_1,\ldots,t_n)[{\bf y}]\Bigr)   \\
    K_1^{G_z}\Bigl(k(w_1,\ldots,w_n)[t_1^{\pm 1},\ldots,t_n^{\pm 1},{\bf y}]\Bigr) &  \\
    K_1^{G_w}\Bigl(k(w_1,\ldots,w_n)[t_1^{\pm 1},\ldots,t_n^{\pm 1},{\bf y}]\Bigr) & K_1^{G_w}\Bigl(k(w_1,\ldots,w_n,t_1,\ldots,t_n)[{\bf y}]\Bigr)\\
  };
\path[-stealth]
     (m-1-1)       edge node [above] {$j_1$}  (m-1-2)
    (m-4-1.east) edge node [above] {$j_2$} (m-4-2)
;
\path[-stealth](m-1-2) edge node [left] {$\st f_2\colon x_i\mapsto z_i$}  (m-2-2);
\path [-stealth]
    (m-1-1)  edge node [left] {$\st f_1\colon x_i\mapsto z_i$}(m-2-1)
    (m-2-1) edge node [left] {$\st h\colon z_i\mapsto w_it_i^d$} (m-3-1)
    (m-3-1) edge node [left] {$\st g_1$} (m-4-1)
    (m-3-1) edge node [right] {$\cong$} (m-4-1)
    (m-2-2) edge node [left] {$\st g_2\colon  z_i\mapsto w_it_i^d$} (m-4-2)
    (m-2-2) edge node [right] {$\cong$}  (m-4-2);
;

\end{tikzpicture}

In order to prove that $j_1$ is injective, it is enough to show that all maps $j_2,g_1,h,f_1$ are injective.
The map $j_2$ is injective by Lemma~\ref{lem:const-inj}. As explained above, $g_1$ is an isomorphism.
The map $h$ is injective by Lemma~\ref{lem:zwt}. Finally, the map $f_1$ is injective, since it has a retraction
that sends $z_i$ to $x_i$ and $t_i$ to $0$. Therefore, the map $j_1$ is injective.

Now we prove (ii). Consider the commutative diagram
\begin{equation*}
\xymatrix{
K_1^G\Bigl(k[x_1^{\pm 1},\ldots,x_n^{\pm 1}][{\bf y}]\Bigr)\ar[rr]^{y_i\mapsto 0}\ar[d]_{} &&
K_1^G\Bigl(k[x_1^{\pm 1},\ldots,x_n^{\pm 1}]\Bigr)\ar[d]^{}  &\\
K_1^G\Bigl(k(x_1,\ldots,x_n)[{\bf y}]\Bigr)\ar[rr]^{y_i\mapsto 0}&& K_1^G\Bigl(k(x_1,\ldots,x_n)\Bigr).
}
\end{equation*}
The bottom arrow is an isomorphism by~\cite[Theorem 1.2]{St-poly}. The vertical arrows are injective
by the previous paragraph Therefore, the top arrow
$$
K_1^G\Bigl(k[x_1^{\pm 1},\ldots,x_n^{\pm 1}][{\bf y}]\Bigr)\xrightarrow{y_i\mapsto 0}
K_1^G\Bigl(k[x_1^{\pm 1},\ldots,x_n^{\pm 1}]\Bigr)
$$
is also injective. Since it has a section, it is an isomorphism.

\end{proof}

\begin{proof}[Proof of Theorem~\ref{thm:main}.]
We prove the injectivity claim by induction on $n$ starting with the trivial case $n=0$. To prove the induction step, it is enough
to show that the map
$$
j:K_1^G\Bigl(k[x_1^{\pm 1},\ldots,x_n^{\pm 1}]\Bigr) \to K_1^G\Bigl(k((x_1))[x_2^{\pm 1},\ldots,x_n^{\pm 1}]\Bigr)
$$
is injective. The latter follows from the injectivity of the composition
$$
j_1\colon K_1^G\Bigl(k[x_1^{\pm 1},\ldots,x_n^{\pm 1}]\Bigr) \xrightarrow{j} K_1^G\Bigl(k((x_1))[x_2^{\pm 1},\ldots,x_n^{\pm 1}]\Bigr)\to K_1^G\Bigl(k[x_2^{\pm 1},\ldots,x_n^{\pm 1}]((x_1))\Bigr),
$$
which we proceed to establish.

The group $G$ is loop reductive by~\cite[Corollary 6.3]{GiPi-mem}, since it has a maximal torus. We apply Lemma~\ref{lem:diag} to $G$,
and we use Notation~\ref{nota:diag}.
Consider the following commutative diagram. Here $j_1,j_2$ are the natural maps, and
the isomorphism $g_1$ and the map $g_2$ exist by Lemma~\ref{lem:diag}.

\begin{tikzpicture}
\matrix (m) [matrix of math nodes,row sep=25pt,column sep=3em,minimum width=2em]
  {
     K_1^G\Bigl(k[x_1^{\pm 1},\ldots,x_n^{\pm 1}]\Bigr) & K_1^G\Bigl(k[x_2^{\pm 1},\ldots,x_n^{\pm 1}]((x_1))\Bigr) \\
    K_1^{G_z}\Bigl(k[z_1^{\pm 1},\ldots,z_n^{\pm 1},t_1^{\pm 1},\ldots,t_n^{\pm 1}]\Bigr)& K_1^{G_z}\Bigl(k[z_2^{\pm 1},\ldots,z_n^{\pm 1}]((z_1))\Bigr)   \\
    K_1^{G_w}\Bigl(k[w_1^{\pm 1},\ldots,w_n^{\pm 1},t_1^{\pm 1},\ldots,t_n^{\pm 1}]\Bigr) & K_1^{G_w}\Bigr(k[w_1^{\pm 1},\ldots,w_n^{\pm 1},t_2^{\pm 1},\ldots,t_n^{\pm 1}]((t_1))\Bigr)\\
  };
\path[-stealth]
     (m-1-1)       edge node [above] {$j_1$}  (m-1-2)
    (m-3-1.east) edge node [above] {$j_2$} (m-3-2)
;
\path[-stealth](m-1-2) edge node [left] {$\st f_2\colon x_i\mapsto z_i$}  (m-2-2);
\path[-stealth](m-1-2) edge node [right] {$\cong$}  (m-2-2);
\path [-stealth]
    (m-1-1)  edge node [left] {$\st f_1\colon x_i\mapsto z_i$}(m-2-1)
    (m-2-1) edge node [right] {$\cong$} (m-3-1)
    (m-2-1) edge node [left] {$\st g_1\colon z_i\mapsto w_it_i^d$} (m-3-1)
    (m-2-2) edge node [left] {$\st g_2\colon  z_i\mapsto w_it_i^d$} (m-3-2)
;

\end{tikzpicture}



In order to show that $j_1$ is injective, it is enough to show that $f_1$ and $j_2$ are injective.
The map $f_1$ is injective, since it has a retraction
that sends $z_i$ to $x_i$ and $t_i$ to $1$.
Set
$$
A=k[w_1^{\pm 1},\ldots,w_n^{\pm 1},t_2^{\pm 1},\ldots,t_n^{\pm 1}].
$$
By Lemma~\ref{lem:Lau-frac} (ii) we have $K_1^{G_w}(A[t_1])=K_1^{G_w}(A)$, therefore, by
Corollary~\ref{cor:t-t-inj} the map $j_2$ is injective. Therefore, the map $j_1$ is injective.

To finish the proof of the theorem, it remains to note that, if $G$ is a semisimple group, the map
$$
K_1^G\bigl(k[x_1^{\pm 1},\ldots,x_n^{\pm 1}]\bigr)\to K_1^G\bigl(k((x_1))\ldots((x_n))\bigr)
$$
is surjective by Corollary~\ref{cor:dens-iso}.
\end{proof}


\section{Application to Lie tori}\label{sec:Lie}

Throughout this section, we assume that $k$ is an algebraically closed field of characteristic $0$.
We fix a compatible set of primitive $m$-th roots of unity $\xi_m\in k$, $m\ge 1$.

Let $G$ be an
adjoint simple algebraic group over $k$ (a Chevalley group), and $L=\Lie(G)$ the corresponding simple
Lie algebra over $k$. It is well-known that
$$
\Aut_k(L)\cong\Aut_k(G)\cong G\semir N,
$$
where $N$ is the finite group of automorphisms of the Dynkin diagram of the root system of $L$ and
$G$. Fix two integers $n\ge 0$, $m\ge 1$ and let
$$
\sigma=(\sigma_1,\ldots,\sigma_n)
$$
be an $n$-tuple of pairwise commuting elements of order $m$ in $\Aut_k(L)$. Such an $n$-tuple determines
a $\ZZ^n$-grading on $L$ with
$$
L_{i_1\ldots i_n}=\{x\in L\ |\ \sigma_j(x)=\xi_m^{i_j}x,\ 1\le j\le n\}.
$$

Set $R=k[x_1^{\pm 1},\ldots,x_n^{\pm 1}]$, and let $\tilde R=k[x_1^{\pm\frac 1m},\ldots,x_n^{\pm\frac 1m}]$, $m\ge 1$, be
another copy of $R$, considered as an $R$-algebra via the natural embedding $R\subseteq \tilde R$.
Then $\tilde R/R$ is a Galois ring extension with the Galois group
$$
\Gal(\tilde R/R)\cong (\ZZ/m\ZZ)^n.
$$

\begin{defn}
The \emph{multiloop Lie algebra $\La(L,\sigma)$} is the $\ZZ^n$-graded $k$-Lie subalgebra
$$
\La(L,\sigma)=\bigoplus\limits_{(i_1,\ldots,i_n)\in\ZZ^n}L_{i_1\ldots i_n}\otimes x_1^{\frac{i_1}m}\ldots
x_n^{\frac{i_n}m}
$$
of the $k$-Lie algebra $L\otimes_k \tilde R$.
\end{defn}

Note that, considered as an $R$-Lie algebra, the algebra $\La(L,\sigma)$ is an $\tilde R/R$-twisted form of the
$R$-Lie algebra $L\otimes_k R$. Indeed,
$$
\La(L,\sigma)\otimes_R\tilde R\cong (L\otimes_k R)\otimes_R \tilde R.
$$

Let $\Delta$ be a finite root system in the sense of~\cite{Bu} together with the $0$-vector, which
we include following the tradition in the theory of extended affine Lie algebras. We set
$\Delta^\times=\Delta\setminus\{0\}$, $Q=\ZZ\Delta$, and
$$
\Delta^\times_{ind}=\{\alpha\in\Delta^\times\ |\ {\textstyle\frac 12}\alpha\not\in\Delta\}.
$$

The importance of multiloop Lie algebras stems from the fact that they provide explicit realizations for
a class of infinite-dimensional Lie algebras over $k$ called Lie tori. This was shown by B. Allison,
S. Berman, J. Faulkner and A. Pianzola in~\cite{ABFP}.

\begin{defn}\cite[Def. 1.1.6]{ABFP}\label{defn:LT}
A \emph{Lie $\Lambda$-torus of type $\Delta$} is a $Q\times\Lambda$-graded Lie algebra
$\La=\bigoplus\limits_{(\alpha,\lambda)\in Q\times\Lambda}\La_\alpha^\lambda$ over $k$ satisfying
\begin{enumerate}
\item $\La_{\alpha}^\lambda=0$ for all $\alpha\in Q\setminus\Delta$ and all $\lambda\in\Lambda$.
\item $\La_{\alpha}^0\neq 0$ for all $\alpha\in\Delta^\times_{ind}$.
\item $\Lambda$ is generated by the set of all $\lambda\in\Lambda$ such that $\La_\alpha^\lambda\neq 0$
for some $\alpha\in\Delta$.
\item For all $(\alpha,\lambda)\in\Delta^\times\times\Lambda$ such that $\La_\alpha^\lambda\neq 0$, there exist elements
$e_{\alpha}^\lambda\in\La_\alpha^\lambda$ and $f_\alpha^\lambda\in\La_{-\alpha}^{-\lambda}$ satisfying
$$
\La_\alpha^\lambda=ke_{\alpha}^\lambda,\quad \La_{-\alpha}^{-\lambda}=kf_{\alpha}^\lambda,\quad\mbox{and}\quad
[[e_\alpha^\lambda,f_\alpha^\lambda],x]=\l<\beta,\alpha^\vee\r>x
$$
for all $x\in\La_\beta^\mu$, $(\beta,\mu)\in \Delta\times\Lambda$.
\item $\La$ is generated as a $k$-Lie algebra by the subspaces $\La_\alpha^\lambda$,
$(\alpha,\lambda)\in\Delta^\times\times\Lambda$.
\end{enumerate}
If $\Lambda=\ZZ^n$, then $n$ is called the \emph{nullity} of $\La$.
\end{defn}

In what follows we will always assume that
$$
\Lambda=\ZZ^n.
$$
By~\cite[Lemma 1.3.5 and Prop. 1.4.2]{ABFP}, if a centerless Lie torus $\La$ with $\Lambda\cong\ZZ^n$
is finitely generated over its centroid (fgc), then the centroid is isomorphic
as a $k$-algebra to
$$
k[\ZZ^n]\cong k[x_1^{\pm 1},\ldots,x_n^{\pm 1}]=R.
$$
Note that, according to an annouced result of E. Neher~\cite[Theorem 7(b)]{N}, all Lie tori are fgc,
except for just one class of Lie tori of type $A_n$ called quantum tori;
see~\cite[Remark 1.4.3]{ABFP}.

If a centerless Lie torus $\La$ is fgc, the Realization theorem~\cite[Theorem 3.3.1]{ABFP} asserts that $\La$ as a Lie
algebra over its centroid $R$ is $\ZZ^n$-graded isomorphic to a multiloop algebra
$\La(L,\sigma)$. In particular, the Lie torus $\La$ is a $\tilde R/R$-twisted form of a split simple Lie algebra
$L\otimes_k R$. Consequently, the group scheme of $R$-equivariant automorphisms $\Aut_R(\La)$ is a twisted form of $\Aut_R(L\otimes_k R)$, and
$\Aut_R(\La)^\circ$ is an adjoint simple reductive group over $R$. Moreover,
$$
\Lie(\Aut_R(\La)^\circ)\cong\La
$$
as Lie algebras over $R$, e.g.~\cite[Prop. 4.10]{GiPi07}.


\begin{proof}[Proof of Theorem~\ref{thm:Lie}.]
First we show that the adjoint simple reductive group $G=\Aut_R(\La)^\circ$ over $R$ contains a closed $R$-subgroup
$S\cong (\Gm_{,R})^r$, where $r=\rank\Delta$.
Indeed, the Lie algebra $\La$ over $R$ is $Q$-graded, where $Q=\ZZ\Delta$. This grading naturally determines
a closed subgroup $S\cong (\Gm_{,R})^r$ of $\Aut_R(\La)$, where $r=\rank\Delta$. Namely, let $\Pi\subseteq\Delta$
be a system of simple roots, $|\Pi|=r$.
For any simple root $\alpha\in\Pi$, any commutative $R$-algebra $R'$, and any $c\in (R')^\times=\Gm(R')$,
there is a unique automorphism $t_{\alpha}(c)$ of $\La\otimes_R R'$ such that, for any $\lambda\in\ZZ^n$,
one has
$$
t_\alpha(c)(e_{\alpha}^\lambda)=ce_{\alpha}^\lambda,\qquad t_\alpha(c)(f_{\alpha}^\lambda)=c^{-1}f_{\alpha}^\lambda,
\qquad\mbox{and}
$$
$$
t_\alpha(c)(e_{\beta}^\lambda)=e_{\beta}^\lambda,\qquad t_\alpha(c)(f_{\beta}^\lambda)=f_{\beta}^\lambda
\qquad\mbox{for all}\ \beta\in\Pi,\ \beta\neq\alpha.
$$
Clearly, $S\subseteq\Aut_R(\La)^\circ$.

Conversely, the grading induced by the adjoint action of $S$ on $\Lie(\Aut_R(\La)^\circ)\cong\La$
is exactly the initial $Q$-grading. The system of simple roots $\Pi\subseteq\Delta$ determines a
decomposition $\Delta=\Delta^+\cup\Delta^-\cup\{0\}$, and by Lemma~\ref{lem:T-P} there exist
two opposite parabolic $R$-subgroups $P^+=U_{\Delta^+\cup\{0\}}$, $P^-=U_{\Delta^-\cup\{0\}}$ of
$G$, and their unipotent radicals are of the form $U_{\Delta^+}$ and $U_{\Delta^-}$ respectively.
Since $\Spec R$ is connected, the relative roots and relative roots subschemes with respect to
$P^{\pm}$ are defined over $\Spec R$. By Lemma~\ref{lem:rootels} (iv) the groups
$U_{\Delta^\pm}$ are generated by the root elements $X_\alpha(v)$, $\alpha\in\Delta^\pm$, $v\in\Lie(G)_\alpha$.
By Example~\ref{exa:exp} we can identify $X_\alpha(v)$ with $\exp(\ad_v)$. Therefore,
we have
$$
E_{P^+}(R)=\l<U_{\Delta^+}(R),U_{\Delta^-}(R)\r>=E_{exp}(\La).
$$

Since $\rank\Delta\ge 2$, the group $G$ contains $(\Gm_{,R})^2$. It also contains a maximal $R$-torus,
since by~\cite[p. 532]{GiPi08} the group $G$ is loop reductive.
It remains to apply Theorem~\ref{thm:main}.


\end{proof}

\renewcommand{\refname}{References}

\end{document}